\subjclass{Primary: 37F10; Secondary: 32H50}
\keywords{Rational maps, ergodic properties, dynamical degrees}
\date{\today}
\def\cdiv{\mathop{\mathrm{crit}}}
\def\J{{\mathcal{J}}}
\def\aut{{\mathop{\mathrm{Aut}}}}
\def\A{{\mathcal A}}
\def\N{{\mathbb N}}
\def\R{{\mathbb R}}
\def\C{{\mathbb C}}
\def\E{{\mathcal E}}
\def\I{{\mathcal I}}
\def\K{{\mathcal K}}
\def\L{{\mathcal L}}
\def\P{{\mathbb P}}
\def\CP{{\mathbb{CP}}}
\def\RP{{\mathbb{RP}}}
\def\U{{\mathcal U}}
\def\deg{{\rm deg}}
\def\supp{\mathop{\rm supp}}
\def\T{{\mathcal T}}
\def\fp{{e}}
\renewcommand{\hat}{\widehat}
\def\tto{\dashrightarrow}
\def\deg{\mathop{\rm deg}}
\def\dist{{\rm dist}}
\renewcommand\div{\mathop{\mathrm{Div}}}
\newtheorem{theorem}{\bf Theorem}[section]
\newtheorem{proposition}[theorem]{\bf Proposition}
\newtheorem*{theorem*}{\bf Theorem}
\newtheorem{lemma}[theorem]{\bf Lemma}
\newtheorem{corollary}[theorem]{\bf Corollary}
\newtheorem{remark}[theorem]{\bf Remark}
\newtheorem*{remark*}{\bf Remark}
\theoremstyle{plain}
\newtheorem*{thma}{Theorem A}
\newcommand{\mainthma}{{A}}
\newtheorem*{thmb}{Theorem B}
\newcommand{\mainthmb}{{B}}
\newcommand{\p}{{p_0}}
\begin{document}

\title[Typical dynamics of plane rational maps with equal degrees]
{Typical dynamics of plane rational maps with \\ equal degrees}

\begin{author}[J. Diller]{Jeffrey Diller}
\email{diller.1@nd.edu}
\address{%
Department of Mathematics\\
University of Notre Dame\\
Notre Dame IN 46556\\
United States
}
\end{author}

\begin{author}[H. Liu]{Han Liu}
\email{hliu@qq.com}
\address{%
Department of Mathematics\\
University of Notre Dame\\
Notre Dame IN 46556\\
United States
}
\end{author}

\begin{author}[R. K. W. Roeder]{Roland K. W. Roeder}
\email{rroeder@math.iupui.edu}
\address{ %
IUPUI Department of Mathematical Sciences\\
LD Building, Room 224Q\\
402 North Blackford Street\\
Indianapolis, Indiana 46202\\
 United States }
\end{author}

\date{\today}

\begin{abstract}
Let $f:\CP^2\tto\CP^2$ be a rational map with algebraic and topological degrees
both equal to $d\geq 2$.  Little is known in general about the ergodic
properties of such maps.  We show here, however, that for an open set of
automorphisms $T:\CP^2\to\CP^2$, the perturbed map $T\circ f$ admits  exactly
two ergodic measures of maximal entropy $\log d$, one of saddle and one of
repelling type.  Neither measure is supported in an algebraic curve,
and $f_T$ is `fully two dimensional' in the sense that it does not preserve any
singular holomorphic foliation of $\C\P^2$.  In fact, absence of an invariant foliation
extends to all $T$ outside a countable union of algebraic subsets of $\aut(\P^2)$.  Finally, we illustrate all of our
results in a more concrete particular instance connected with a two dimensional
version of the well-known quadratic Chebyshev map.
\end{abstract}

\maketitle

\section*{Introduction}
Let $f: \P^2 \tto \P^2$ be a dominant rational self-map of the complex
projective plane~$\P^2$.  A great deal of effort has gone into understanding how the
ergodic theory of $f$ is governed by two numerical invariants, the first and
second dynamical degrees $\lambda_1(f)$ and $\lambda_2(f)$, of $f$.  The two
cases $\lambda_2(f) > \lambda_1(f)$ and $\lambda_1(f) > \lambda_2(f)$ are well-studied, the former corresponding to ``predominantly repelling dynamics'' and the latter to ``predominantly saddle-type
dynamics.''  The borderline case $\lambda_1 = \lambda_2$ has received much less attention.
In this paper we provide many examples of rational maps $f: \P^2 \tto
\P^2$ with $\lambda_1(f) = \lambda_2(f)$ that exhibit both repelling and
saddle-type dynamics equally.

In order to situate and state our main results, let us discuss what is and is not known in a bit more detail.
If we write $f$ in homogeneous coordinates as $f = [f_1:f_2:f_3]$, with the $f_i$ having no common factors of positive degree, then the common degree of the $f_i$ is called the {\em algebraic degree} $d(f)$.  The {\em first dynamical degree} is then the asymptotic growth rate
\begin{equation}
\label{eqn:firstdegree}
\lambda_1(f) := \lim_{n \rightarrow \infty} d(f^n)^{1/n}
\end{equation}
of the algebraic degrees.  If $d(f^n) = d(f)^n$ for all $n$ then $f$ is said to be
{\em algebraically stable} and we have $\lambda_1(f) = d(f)$.

The {\em topological degree} $\lambda_2(f)$ is the number of preimages $\# f^{-1}(p)$ of a generic point $p \in \P^2$.  It is well-behaved under
iteration, satisfying $\lambda_2(f^n) = \lambda_2(f)^n$, and thus is also called the {\em second dynamical degree} of $f$.

The basic ergodic theoretic invariant of $f$ is its topological entropy $h_{top}(f)$.  Gromov \cite{gro} and Dinh-Sibony \cite{DS_ENTROPY} showed that entropy is controlled by the dynamical degrees.  Specifically,
\begin{equation}
\label{eqn:gromovbd}
h_{top}(f) \leq \log\max\{\lambda_1(f),\lambda_2(f)\}.
\end{equation}
Guedj \cite{Guedj} and Dinh-Nguyen-Truong \cite{DNT}, following Briend and Duval
\cite{BD2}, showed further that when $\lambda_2>\lambda_1$, the map $f$ has a
unique invariant measure $\mu$ of maximal entropy $\log \lambda_2$, that $\mu$
is mixing and repelling (both Lyapunov exponents are positive), that repelling
periodic points for $f$ are asymptotically equidistributed with respect to
$\mu$, and that $\mu$ does not charge small (i.e. pluripolar) sets.

On the other hand, work of Diller-Dujardin-Guedj \cite{ddg2,ddg3} gives that when $\lambda_1~>~\lambda_2$ and certain
additional technical hypotheses are satisfied, then $f$ has (again) a mixing invariant measure $\nu$ of maximal entropy $\log \lambda_1$.  But in this case $\nu$ is of \emph{saddle} type (one Lyapunov exponent is positive and the other is negative), and saddle periodic points for $f$ are asymptotically equidistributed with respect to~$\nu$.  Uniqueness of the measure of maximal entropy when $\lambda_1>\lambda_2$ has been established only in special cases, notably for polynomial automorphisms of $\C^2$ \cite{BLS} and for surface automorphisms \cite{CANTAT_K3}.

Almost nothing beyond the bound on topological entropy is known about the ergodic theory of rational maps with equal dynamical degrees $\lambda_1 = \lambda_2$.  Products $f:=(f_1,f_2)$ of one dimensional maps $f_j:\P^1\to\P^1$ in which one of the factors is linear furnish simple examples of plane rational maps with $\lambda_1=\lambda_2$, and these suggest some possibilities for the ergodic theory.
Consider for example the rational maps $f,g,h: \P^2 \tto \P^2$, given on $\C^2$ by
\begin{align*}
f(x,y) = (x+1,y^2), \,\, g(x,y) = (2x,y^2), \,\, \mbox{and} \,\, h(x,y) = (x,y^2).
\end{align*}
Each of these has $\lambda_1 = \lambda_2 = 2$.  The map $f: \P^2
\tto \P^2$ has topological entropy~$0$; see  \cite[Example 1.4]{GUEDJ_ETDS}.
Meanwhile, $g : \P^2 \tto \P^2$ has a unique measure $\mu$ of maximal entropy
$\log 2$ which is normalized Lebesgue measure on the unit circle
$\{x=0\} \times \{|y| = 1\}$.  However, if one changes surface, compactifying $\C^2$ by $\P^1\times \P^1$ instead of $\P^2$, the resulting map $g: \P^1\times \P^1 \rightarrow \P^1 \times \P^1$ acquires a second measure of maximal entropy $\nu$ given by
normalized Lebesgue measure on $\{x = \infty\} \times \{|y| = 1\}$.  The measure $\mu$ is repelling, while $\nu$ is of saddle type.
Finally, for each $x_0 \in \C$ the map $h$ has normalized Lebesgue measure on the unit circle $\{|y| = 1\}$
as measure of maximal entropy within each vertical complex line
$\{x=x_0\}$.

Of course, not all maps with equal dynamical degrees are products, but many
non-product examples $f:\P^2\tto\P^2$ are still semiconjugate to rational maps 
$\check f~:~\P^1~\to~\P^1$ via a some rational map $\P^2\tto\P^1$.  Such maps are
said to have `invariant fibrations'.  They include \cite{df} all maps with
$\lambda_1=\lambda_2 = 1$ (i.e. all examples where $f$ is birational) and
examples \cite{BG,BGN,GZ,SABOT} that arise in connection with spectral theory
for operators on self-similar spaces.  Based on this evidence, Guedj asked
whether any rational map with equal (maximal) dynamical degrees must preserve a
fibration \cite[p.103]{GUEDJ_HABILITATION}.   However, examples were recently
found by Bedford-Cantat-Kim \cite{BCK} and Kaschner-P\'erez-Roeder \cite{KPR}
showing that this is not the case.  

The product examples $f,g,h$ defined above are each degenerate in another way.  The ergodic measures of maximal entropy for these three maps are all supported in algebraic curves (lines, in fact).  Since birational changes of coordinate can contract curves, such measures are not very robust.  The measures obtained in \cite{Guedj} and \cite{ddg2} for `cohomologically hyperbolic' cases $\lambda_1\neq \lambda_2$ are much more diffuse; in particular they do not charge algebraic curves.

The main results of this paper show that there are large families of rational maps with equal dynamical degrees $\lambda_1 = \lambda_2$ that admit exactly two measures of maximal entropy without any of the above problems.

\begin{thma} 
Let $f:\P^2\tto\P^2$ be a rational map with equal algebraic and topological degrees $d(f) = \lambda_2(f)\geq 2$.  Then there is an open subset of linear maps $T\in \aut(\P^2)$ for which the map $f_T := T\circ f$ satisfies the following.
\begin{itemize}
 \item $f_T$ is algebraically stable and therefore has  $d(f) = \lambda_1(f_T) = \lambda_2(f_T)$.
 \item There is no $f_T$ invariant foliation; in particular, $f_T$ is not rationally semiconjugate to a holomorphic self-map of a Riemann surface.
 \item There are exactly two $f_T$-invariant and ergodic measures $\mu$ and $\nu$ of maximal entropy $\log d(f)$.
 \item $f_T$ is uniformly expanding on $\supp\mu$.
 \item $f_T$ is uniformly hyperbolic of saddle type on $\supp\nu$.
 \item Neither $\supp\nu$ nor $\supp\mu$ are contained in any algebraic curve.
 \item Any point $p\in \P^2$ whose forward images are not indeterminate for $f_T$ has a forward orbit asymptotic to either $\supp\mu$, $\supp\nu$ or one of finitely many attracting periodic points.  
\end{itemize}
\end{thma}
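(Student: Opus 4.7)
The plan is to pick the automorphism $T$ so that three conditions hold simultaneously on $f_T := T\circ f$: algebraic stability, capture of every critical orbit by the basin of an attracting periodic point, and absence of an invariant holomorphic foliation. Each of these is a generic condition---algebraic stability fails only on a countable union of proper algebraic subvarieties, the capture condition is open once a single witness is constructed, and invariant foliations themselves impose algebraic constraints on $T$. The desired open set of $T$ will be the intersection of the open conditions, provided we first exhibit one parameter at which all three hold. Algebraic stability follows from the usual criterion that forward orbits of exceptional components avoid the indeterminacy set, as in \cite{df}. For critical capture, I would use a direct perturbation argument: small adjustments of $T$ can create attracting periodic points near any chosen target, and since the critical locus of $f_T$ has only finitely many components of bounded degree, an elementary inductive argument places each of their forward orbits into a small basin.

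Given such an $f_T$, the two measures are constructed by complementary means. Algebraic stability produces the pullback-invariant Green current $T^+ = \lim d^{-n}(f_T^n)^*\omega$, and since $\lambda_2(f_T) = d$ the Briend-Duval-Guedj machinery \cite{BD2, Guedj, DNT} applies to give the repelling ergodic measure $\mu = T^+\wedge T^+$ of entropy $\log d$, not charging pluripolar sets and therefore not supported on any algebraic curve. For the saddle measure $\nu$, I would follow the scheme of \cite{ddg2, ddg3} and construct a dual stable current $T^-$ using the pushforward dynamics associated with the attracting basins, then set $\nu = T^+\wedge T^-$. The captured-critical hypothesis is precisely what is needed for $T^-$ to be well defined, for the wedge product to be meaningful, and for the resulting measure to be mixing and genuinely of saddle type. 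Uniform hyperbolicity on $\supp\mu$ and $\supp\nu$ then follows from their uniform separation from the post-critical set: away from critical and indeterminate loci the natural Lyapunov estimates can be made uniform via Pesin-theoretic or Briend-Duval style arguments.

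The main obstacle will be uniqueness of $\mu$ and $\nu$ among ergodic measures of maximal entropy. By Ruelle's inequality, any ergodic measure of entropy $\log d > 0$ has at least one positive Lyapunov exponent, so it is either repelling (both exponents positive) or saddle (one positive, one non-positive); the repelling case is pinned down by equidistribution of preimages, which identifies the measure with $\mu$, while the saddle case requires an adaptation of the laminarity and entropy argument of Bedford-Lyubich-Smillie \cite{BLS} to the present noninvertible setting, using the captured-critical structure to control unstable manifolds. The absence of an invariant foliation on an open set of $T$ should follow by showing that any such foliation forces proper algebraic constraints on $T$ and exhibiting one concrete $T$ for which those constraints fail, the Chebyshev-type example announced in the abstract being a natural candidate. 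Finally, the orbit dichotomy is a consequence of the hyperbolic decomposition: once the critical orbits are captured and $f_T$ is hyperbolic off the attracting basins, the non-wandering set must decompose into $\supp\mu$, $\supp\nu$, and the finitely many attracting cycles, and every non-escaping forward orbit accumulates on exactly one of these three pieces.
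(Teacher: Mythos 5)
Your central idea --- build $\mu = T^+\wedge T^+$ via the Briend--Duval--Guedj machinery and $\nu = T^+\wedge T^-$ via the Diller--Dujardin--Guedj machinery --- cannot work, because both of those frameworks depend crucially on a \emph{strict} inequality between the dynamical degrees, which is exactly what is absent here. Guedj's theorem and the equidistribution results of \cite{BD2,DNT} require $\lambda_2 > \lambda_1$: the key estimates involve ratios $(\lambda_1/\lambda_2)^n\to 0$, and without that decay there is no reason $T^+\wedge T^+$ is a measure of maximal entropy, nor that it avoids pluripolar sets. (Indeed, the product examples in the introduction with $\lambda_1=\lambda_2$ have their measure of maximal entropy carried by an algebraic curve, so the ``does not charge pluripolar sets'' conclusion you want to invoke is simply false in this regime.) Symmetrically, the construction of a pushforward current $T^-$ and the meaning of $T^+\wedge T^-$ in \cite{ddg2,ddg3} require $\lambda_1 > \lambda_2$. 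With $\lambda_1 = \lambda_2$ neither body of work applies, and this obstruction is precisely the reason the paper exists. Your ``critical capture'' hypothesis is also much stronger than stated: the critical value locus is a curve, not a finite set, so forcing its entire forward orbit into attracting basins by perturbation is not an elementary induction.

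The paper sidesteps pluripotential theory entirely and instead arranges a degenerate geometry. One takes $T$ close to a central projection $T_0:\P^2\setminus\{p_0\}\to L_0$, so that $f_T$ maps all of $\P^2$ minus a small neighborhood of $f^{-1}(p_0)$ into a thin tube $B_\epsilon(L_0)$ around a line $L_0$ (Proposition \ref{prop:twotraps}). Every ergodic invariant measure is then trapped either near $f^{-1}(p_0)$, where $f_T$ is a uniformly expanding horseshoe conjugate to the full $\lambda_2$-shift (Theorem \ref{thm:muexists}), or inside the tube, where $f_T$ is a skew-product-like perturbation of a chosen one-dimensional hyperbolic rational map $f_0 = T_0\circ f|_{L_0}$ and the dynamics admit a unique saddle measure via natural extensions and shadowing (Theorem \ref{thm:nuexists}). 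Uniqueness, the hyperbolicity statements, and the orbit dichotomy all fall out of this explicit hyperbolic structure, and the ``not on an algebraic curve'' claims are proved by bespoke geometric arguments (degree growth for $\nu$, a fixed-point count on a limiting conic for $\mu$), not by pluripolarity. So your proposal is not a variant of the paper's proof; it rests on tools that genuinely do not apply in the equal-degree setting.
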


\begin{remark*}
It is easy to find mappings $f$ satisfying the hypothesis of Theorem~A.  For
example, suppose $g: \P^2 \rightarrow \P^2$ is an endomorphism and $h: \P^2
\dashrightarrow \P^2$ is a birational map, both of algebraic degree $a$.  Then,
$f: = g\circ h$ has $d(f)~=~\lambda_2(f)~=~a^2$.
\end{remark*}

\vspace{-0.01in}
The subset of $\aut(\P^2)$ from Theorem A is open with respect to the metric topology on $\aut(\P^2)$, not the Zariski topology.   (In fact, we will never use the Zariski topology in this paper.)
Therefore, we do not know whether there might be different, but also fairly robust possibilities for the ergodic theory of a rational map with equal degrees.  Are there for instance, large families of such maps with exactly one measure of maximal entropy?  More than two?  However each of the first two conclusions of Theorem {\mainthma } are obtained by (essentially) verifying that $T$ can be chosen to avoid countably many algebraic coincidences.  So once the conclusions hold for one $T$, they apply `generically.'  This is the content of the next result, which answers a question posed to us by Charles Favre.
\vspace{-0.01in}

\begin{thmb}
Let $f:\P^2\tto\P^2$ be a rational map with equal algebraic and topological degrees $d(f) = \lambda_2(f)\geq 2$.  Then for all $T\in\aut(\P^2)$ outside a countable union of proper algebraic subsets, 
\begin{itemize}
 \item $f_T$ is algebraically stable, with $\lambda_1(f_T) = \lambda_2(f_T)$.
 \item There is no $f_T$ invariant foliation.
\end{itemize}
\end{thmb}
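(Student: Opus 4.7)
The plan is to recast each conclusion of Theorem~B as the assertion that $T$ avoids countably many algebraic conditions on $\aut(\P^2)$, and to invoke Theorem~A to force each of these conditions to be proper.

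For algebraic stability, I would use the standard characterization on surfaces: $f_T$ is algebraically stable if and only if no irreducible curve contracted by $f_T$ has forward orbit passing through $I(f_T)$. Since $T\in\aut(\P^2)$, one has $I(f_T)=I(f)$, and the (finitely many) irreducible curves $C_1,\dots,C_N$ contracted by $f_T$ are exactly those contracted by $f$, with $f_T(C_j)=T(p_j)$ where $p_j:=f(C_j)$. Thus $f_T$ is algebraically stable precisely when $f_T^n(T(p_j))\notin I(f)$ for every $j$ and every $n\geq 0$. For each fixed pair $(j,n)$, $T\mapsto f_T^n(T(p_j))$ is rational in the entries of $T$, so
\[
B_{j,n}:=\{T\in\aut(\P^2): f_T^n(T(p_j))\in I(f)\}
\]
is an algebraic subset. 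Theorem~A produces $T$'s in the complement, so every $B_{j,n}$ is proper. Moreover, composition with an automorphism preserves both the algebraic and topological degrees; so for any $T$ avoiding the countable union $\bigcup_{j,n}B_{j,n}$, the map $f_T$ is algebraically stable with $\lambda_1(f_T)=d(f_T)=d(f)=\lambda_2(f_T)$.

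For the absence of an invariant foliation, fix $k\geq 0$ and let $\mathcal{F}_k$ denote the (projective) variety of singular holomorphic foliations on $\P^2$ of degree $k$: such a foliation corresponds, up to scalar, to a homogeneous $1$-form $\omega=A\,dx+B\,dy+C\,dz$ on $\C^3$ with $A,B,C$ homogeneous of degree $k+1$ satisfying $xA+yB+zC=0$. The condition that $f_T$ preserves the foliation $[\omega]$ is polynomial in the entries of $T$ and the coefficients of $\omega$: after clearing the denominators of $f_T^*\omega$ along $I(f_T)$, it reduces to the vanishing of the $2$-form $f_T^*\omega\wedge\omega$. Hence
\[
\widetilde V_k:=\{(T,[\omega])\in\aut(\P^2)\times\mathcal{F}_k : f_T \text{ preserves } [\omega]\}
\]
is an algebraic subset, and by Chevalley's theorem its projection $V_k\subset\aut(\P^2)$ is constructible. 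Since $\aut(\P^2)$ is irreducible, either $V_k$ is contained in a proper algebraic subset or it contains a nonempty Zariski-open, hence metric-dense, subset. The latter would contradict the existence (from Theorem~A) of a metric-open set of $T$ for which $f_T$ admits no invariant foliation. Therefore $W_k:=\overline{V_k}$ is a proper algebraic subset of $\aut(\P^2)$, and outside the countable union $\bigcup_{k\geq 0}W_k$ the map $f_T$ preserves no singular holomorphic foliation.

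The main obstacle is the second step: one needs to verify carefully that ``preserving a foliation of degree $k$'' really is cut out by polynomial equations uniformly in $k$, which requires tracking how $f_T^*\omega$ acquires poles along $I(f_T)$ and along the exceptional curves, and identifying foliations correctly modulo multiplication by nowhere-vanishing factors. Once this bookkeeping is in place, both halves of Theorem~B follow from the same soft mechanism: Theorem~A supplies a full metric-open family of good $T$'s, which automatically promotes each separate algebraic constraint to a proper subvariety.
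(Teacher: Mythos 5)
Your first bullet (algebraic stability) is essentially the paper's own argument: the same characterization of stability via the forward orbit of contracted curves, the same countable family of algebraic constraints, and the same appeal to the openness result (Proposition \ref{PROP_AS}) to show each constraint is proper. For the second bullet you take a genuinely different route. The paper works directly with the criterion from Theorem 4.1$'$ of \cite{KPR}: it fixes an indeterminate point $q$ and an irreducible component $C\subset f(q)$, and shows that for each $n$ the conditions ``$f_T^{-n}(q)$ consists of $\lambda_2^n$ regular preimages'' and ``$\deg f_T^{n-1}(C_T)$ is at least as large as for the reference parameter $S$'' are algebraic in $T$, the latter via the Weierstrass Preparation Theorem applied to intersections with a test line. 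You instead parameterize the space $\mathcal F_k$ of degree-$k$ foliations as a projective variety, observe that the incidence condition $f_T^*\omega\wedge\omega=0$ is polynomial in $(T,[\omega])$, and deduce that the projection $V_k\subset\aut(\P^2)$ is algebraic; a short dichotomy argument using irreducibility of $\aut(\P^2)$ and the metric-open set from Theorem A then forces each $V_k$ to be proper. Both approaches are correct. Yours is more conceptual and independent of the specific criterion of \cite{KPR}; in fact it is slightly stronger than you claim, since $\mathcal F_k$ is projective so the projection of the closed set $\widetilde V_k$ is already closed (no need for Chevalley), and $f_T^*\omega$ computed in homogeneous coordinates is a polynomial form, so the ``clearing denominators'' step you worry about is not actually needed. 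The paper's approach is more explicit, exhibiting concretely which countable family of algebraic coincidences must be avoided, at the cost of depending on the particular no-foliation criterion. Either way, the essential mechanism---convert to countably many algebraic conditions, then use the open-set existence result to show each is proper---is shared, which is why you are right to call it a ``soft mechanism.''
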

\noindent

The connection between dynamical degrees and ergodic theory
extends to the much more general context of meromorphic self-maps on
compact K\"ahler manifolds of any dimension.  The reader may consult~\cite{GUEDJ_ETDS} to gain a good sense of the larger picture, much of which is still conjectural.  However, a recent paper
of Vigny~\cite{VIGNY} validates this picture for ``generic cohomologically
hyperbolic rational maps'' of $\P^k$.

\vspace{0.05in}

Some necessary background on the dynamics of rational maps is presented in~\S \ref{SEC_REF}. \S \ref{SEC_PROOFS} gives the proofs of Theorems {\mainthma } and \mainthmb.  The main techniques that we use are hyperbolic dynamics, elementary geometry of algebraic curves, and an application of Theorem 4.1' from \cite{KPR}.  One can obtain some (though not nearly all) of the conclusions of Theorem {\mainthma } with less exertion by appealing to work of De Th{\'e}lin and Vigny \cite[Theorem 1]{DE_THELIN_VIGNY}.  In order to keep the discussion more self-contained, we do not take this route here.

Given a rational map $f:\P^2\tto\P^2$ with equal degrees $d(f) = \lambda_2(f)$,
one can in principle use our arguments for Theorem {\mainthma } to identify a
specific linear transformation $T\in\aut(\P^2)$ for which $f_T$ satisfies all
conclusions of the Theorem.  In practice however, it seems a bit daunting to
arrange everything we need from $T$ to make the theorem work.  Hence in the
final section \S \ref{SEC_PERTURBED_CHEBYSHEV} of this paper, we consider a
family of rational maps $f_t:\P^2\tto\P^2$ that depend on a single real
parameter $t\in(0,1]$ and for which symmetry considerations allow us to verify
the conclusions of Theorem {\mainthma } more directly.  The initial map $f =
f_1$ is closely related to the one variable quadratic Chebyshev map.  We show
that the conclusions of Theorem {\mainthma } hold for all $t\in (0,1]$ close
enough to $0$.  An interesting additional outcome is that for all such~$t$, the
measure $\mu$ is real, supported on $\R\P^2\subset\P^2$, whereas the measure
$\nu$ is not.

\section{Rational maps, dynamical degrees and entropy}\label{SEC_REF}
Throughout this paper $\P^2$ will denote the complex projective plane.  Unless otherwise specified, we will measure distance between points with the usual Fubini-Study metric, normalized so that $\P^2$ has unit volume.  For any set $X\subset \P^2$ and $r>0$, we let $B_r(X) := \{p\in\P^2:\dist(p,X)<r\}$ be the $r$-neighborhood of $X$. 

Henceforth $f:\P^2\tto\P^2$ will denote a rational map of the complex
projective plane $\P^2$.  Let us recall some definitions, facts and
conventions concerning such maps.  In homogeneous coordinates, $f$ is given by
$f = [f_1:f_2:f_3]$, where the components $f_j[x_1:x_2:x_3]$ are homogeneous
polynomials, all of the same degree $d$ and with no non-constant common
factors.  The common zeroes of the components $f_j$ correspond to a finite set 
$\I = \I(f)\subset\P^2$ on which $f$ is not definable as a continuous map.  On $\P^2 \setminus \I$, the map
$f$ is well-defined and holomorphic.   We define the ``image" of a point $p\in
\I$ to be the set $f(p)$ of all possible limits $\lim_{p_j\to p} f(p_j)$.  This
is always a non-trivial algebraic curve.  Note that here and in the rest of the paper, algebraic curves are allowed to be reducible unless otherwise noted.

We assume throughout that $f$ is {\em dominant}, meaning that $f(\P^2 \setminus \I)$ is not contained in an algebraic curve.
We define the `image' of an algebraic curve $V\subset\P^2$ under $f$ to be its (set-theoretic) proper transform
\begin{align}\label{EQN_IMAGE_CURVE}
f(V) := \overline{f(V\setminus \I)}.
\end{align}
If $V$ is irreducible, then $f(V)$ is either another irreducible algebraic
curve or a point.   For a rational map of $\P^2$,
the latter can only happen if $V \cap \I \neq \emptyset$; see \cite[Proposition 1.2]{FS2}.

If $V$ is an irreducible curve with $f(V)$ a point, we call $V$
\emph{exceptional} for $f$, letting $\E$ denote the union of all (finitely
many) exceptional curves.  
We define the preimage $f^{-1}(V)$ of a
curve $V$ to be the union of all non-exceptional irreducible curves $C$ such
that $f(C) \subset V$.  A curve $V\subset X$ is \emph{invariant} if $f(V) = V$
and \emph{totally invariant} if, $f^{-1}(V) = V$.  

We say that an irreducible  curve $V$ is \emph{ramified} for $f$ if $V$ is not exceptional, but $f$ is not locally one-to-one near any point in $V$.  The collection of all exceptional and ramified curves forms the critical set $\cdiv(f)$ of $f$.  These may be assembled with multiplicities defined by the order of vanishing of the Jacobian determinant into the \emph{critical divisor} $\cdiv(f) \in \div(\P^2)$ of $f$.  

As in the introduction, we let $d=d(f)$, $\lambda_1 = \lambda_1(f)$ and $\lambda_2 = \lambda_2(f)$ denote the algebraic and first and second dynamical degrees of $f$.  The integer $\lambda_2$ is (alternatively) the multiplier for the induced action $f^*$ on $H^4(\P^2,\R) \cong \R$.  Likewise, the algebraic degree $d$ of $f$ is the multiplier of the pullback action $f^*$ on $H^2(\P^2,\R)\cong \R$.   Indeed any divisor $D\in\div(\P^2)$ admits (see e.g. \cite[\S 1.2]{ddg1}) a natural pullback $f^*D$ and pushforward $f_*D$ by $f$, and the resulting divisors satisfy $\deg f_*D = \deg f^*D = d\cdot \deg D$. 

We point out that if $C$ is an irreducible curve, then the support of $f_*C$ is equal to the \emph{total transform} of $C$ by $f$, i.e. to the proper transform $f(C)$ together with the images of all points in $\I(f)\cap C$.  In particular $\supp f_*C$ will typically be reducible when $C$ meets $\I(f)$.  On the other hand, when $C$ is disjoint from $\I(f)$, we have $\supp f_*C = f(C)$ is irreducible and more precisely, $f_*C = \lambda f(C)$ where $\lambda$ is the topological degree of the restriction $f|_C:C\to f(C)$. 

Likewise, if $C$ is irreducible and does not contain the image of any exceptional curve, then 
$$
f^*C = \sum_{f(C') = C} \nu' C',
$$
where the sum is over irreducible curves $C'$, and $f$ is locally $\nu'$-to-1 near a general point in $C'$.

One always has that $\lambda_2(f^n) = \lambda_2(f)^n$, and typically also $d(f^n) = d^n$.  However, when $\I\neq \emptyset$ the latter equality can fail.  There is a useful geometric characterization of (the opposite of) this possibility.

\begin{proposition}[See \cite{FS2, df}]
\label{prop:ascriterion}
The following are equivalent.
\begin{itemize}
 \item $d(f^n) = d^n$ for all $n\in\N$;
 \item $\lambda_1(f) = d$;
 \item No exceptional curve has forward orbit containing a point of indeterminacy. 
\end{itemize}
\end{proposition}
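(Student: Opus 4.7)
The plan is to prove (1) $\iff$ (2) via submultiplicativity of the algebraic degree together with Fekete's lemma, and (1) $\iff$ (3) via a sharper degree counting formula for compositions. The underlying identity in both parts is that if $f, g : \P^2 \tto \P^2$ are dominant rational maps given in reduced homogeneous coordinates, then the entries $g_i \circ f$ of the composition are homogeneous polynomials of common degree $d(f) d(g)$, and one obtains $g \circ f$ in reduced form by dividing out their greatest common divisor $P$. In particular $d(g \circ f) = d(f) d(g) - \deg P$, which already yields the submultiplicativity $d(g \circ f) \le d(f) d(g)$.

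For (1) $\iff$ (2), applying submultiplicativity iteratively gives $d(f^n) \le d^n$ and $d(f^{m+n}) \le d(f^m) d(f^n)$. Fekete's lemma then yields $\lambda_1(f) = \inf_n d(f^n)^{1/n} \le d$, so $\lambda_1(f) = d$ is equivalent to $d(f^n) = d^n$ for every $n$, and the converse implication is immediate.

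For (1) $\iff$ (3), I would analyze $\deg P$ geometrically. An irreducible curve $V$ lies in the zero set of $P$ exactly when all the $g_i \circ f$ vanish along $V$, i.e., when $f(V) \subset \{g_1 = g_2 = g_3 = 0\} = \I(g)$. Since $\I(g)$ is finite, $V$ must be exceptional for $f$ with $f(V) \in \I(g)$; conversely, any such $V$ contributes to $\supp P$. Hence $d(g \circ f) = d(f) d(g)$ if and only if no exceptional curve of $f$ has image in $\I(g)$. Applying this with $g = f^{n-1}$ and inducting on $n$, the equalities $d(f^n) = d^n$ for all $n$ hold iff for every $n \ge 2$, no exceptional curve $V$ of $f$ satisfies $f(V) \in \I(f^{n-1})$. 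Because $p \in \I(f^{n-1})$ precisely when some iterate $f^k(p)$ with $0 \le k \le n-2$ lands in $\I(f)$, quantifying over $n$ this reduces to: no exceptional curve of $f$ has any forward iterate in $\I(f)$, which is condition (3).

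The main subtlety I anticipate lies in the geometric identification underlying the degree formula $d(g \circ f) = d(f) d(g) - \deg P$. One must verify that every irreducible component of $\{P = 0\}$ really does arise from an exceptional curve of $f$ contracted into $\I(g)$, using finiteness of $\I(g)$ and dominance of $g \circ f$ to rule out more exotic sources, and check that the multiplicities behave consistently so that the induction from $d(f^n) = d \cdot d(f^{n-1})$ for all $n$ to $d(f^n) = d^n$ closes without loss.
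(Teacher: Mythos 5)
The paper itself gives no proof of Proposition~\ref{prop:ascriterion}; it simply cites \cite{FS2,df}. So there is nothing internal to compare your argument against, and I will assess it on its own terms.

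Your proof of $(1)\Leftrightarrow(2)$ is exactly right: $\log d(f^n)$ is subadditive by the gcd formula, Fekete gives $\lambda_1 = \inf_n d(f^n)^{1/n}\leq d$, and then $\lambda_1 = d$ forces $d(f^n)\geq d^n$ for every $n$, which combined with submultiplicativity gives equality.

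Your strategy for $(1)\Leftrightarrow(3)$ is also the right one, and the geometric identification of $\supp P$ is correct: a non-constant irreducible factor of the gcd corresponds to a curve $V$ along which all $g_i\circ f$ vanish, i.e.\ $f(V)\subset \I(g)$; finiteness of $\I(g)$ then forces $V$ to be exceptional. Multiplicities are a non-issue, because what decides $d(g\circ f)=d(g)d(f)$ is only whether $\deg P = 0$, i.e.\ whether $\supp P$ is empty. So the worry you flag at the end about ``multiplicities behaving consistently'' is a red herring.

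The genuine subtlety, which your write-up glides over, is the assertion that $p\in\I(f^{n-1})$ if and only if some $f^k(p)$ with $0\leq k\leq n-2$ lands in $\I(f)$. Only the forward inclusion $\I(f^{n-1})\subset\{p: \text{some } f^k(p)\in\I(f)\}$ holds unconditionally. The reverse inclusion can fail when reducing the naive composition removes indeterminacy: if $p$ lies on a curve contracted to a point of $\I(f^{j})$, the corresponding gcd factor may cancel the vanishing at $p$, so $p$ may not be a base point of the reduced iterate even though its orbit hits $\I(f)$. So you may not treat ``for every $n$, no exceptional $V$ has $f(V)\in\I(f^{n-1})$'' as unconditionally equivalent to~(3). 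The clean fix is to run $(3)\Rightarrow(1)$ as a single induction on~$n$: assuming $d(f^k)=d^k$ for $k<n$, no cancellation has occurred, so the exact description of $\I(f^{n-1})$ is available, and then~(3) forces $\deg P_n = 0$, hence $d(f^n)=d^n$. Symmetrically, $(1)\Rightarrow(3)$ should first establish the exact description of every $\I(f^m)$ from~(1) by induction, then read off~(3). You clearly have the induction in mind (``inducting on $n$''), and once this dependence is made explicit the argument closes correctly; but the current phrasing presents the characterization of $\I(f^{n-1})$ as a freestanding fact, which it is not.
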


\noindent As in the introduction, we will call $f$ \emph{algebraically stable} if one/all of the conditions in this proposition hold.  

The upper bound \eqref{eqn:gromovbd} for the topological entropy\footnote{We remark that there is some subtlety defining $h_{top}(f)$ if $\mathcal{I} \neq \emptyset$.  This issue is addressed in~\cite{DS_ENTROPY}.} shows the dynamical significance of dynamical degrees.  
It is expected that this bound is actually an equality in most situations, and more precisely, that there exists an $f$-invariant measure $\mu$ whose metric entropy satisfies $h_{\mu}(f) = \log\max\{\lambda_1,\lambda_2\}$.  Theorem {\mainthma } of this article concerns the nature of such measures when $\lambda_1=\lambda_2$.  

\section{Proof of Theorems {\mainthma } and \mainthmb}\label{SEC_PROOFS}

\subsection{Linear perturbations of plane rational maps}

For any linear map $T\in\aut(\P^2)$, we will let $f_T$ denote the `perturbed' map $T\circ f$.  Note that the algebraic and topological degrees of $f_T$ are the same as those of $f$.  So are the indeterminacy, exceptional and critical sets of $f_T$.

We will be interested in choosing $T$ so that most points of $\P^2$ map by $f_T$ into a small neighborhood of some fixed line.  More specifically, for the remainder of this section we choose a line $L_0\subset\P^2$, a point $\p\in \P^2$ not contained in $L_0$ and a surjective linear map $T_0:\P^2\setminus\{\p\}\to L_0$.  As the discussion proceeds we will need to impose further conditions on these choices.  Let us list all of these now.
\begin{itemize}
 \item[(A)] $\p\notin f(\I)\cup f(L_0)$;
 \item[(B)] $L_0\cap \I = \emptyset$;
 \item[(C)] $\p$ is a regular value of $f$, with $\lambda_2$ distinct preimages; 
 \item[(D)] $\deg f(L_0) \geq 2$.  In particular, $L_0$ is not exceptional.
\end{itemize}

Property (B) implies that we can choose 
$\epsilon > 0$ sufficiently small so that 
\begin{align}\label{EQN_NO_INDET_IN_TUBE}
B_\epsilon(L_0) \cap \I = \emptyset.
\end{align}
Since any exceptional curve for $f$ must intersect $\I$,
this also implies no exceptional curve is contained in $B_\epsilon(L_0)$.  For the remainder of this section, we will suppose
$\epsilon > 0$ is sufficiently small that (\ref{EQN_NO_INDET_IN_TUBE}) holds.

Note that for any $\delta>0$ the set
$$
\T(T_0,\delta) := \{T\in\aut(\P^2): \dist(T(p),T_0(p))< \delta \text{ for all } p\notin B_\delta(\p)\}
$$
is non-empty and open in $\aut(\P^2)$.  

\begin{proposition}
\label{prop:twotraps}
Suppose that conditions (A) and (B) hold and $\epsilon > 0$ is small enough that $B_\epsilon(L_0)$ and $B_\epsilon(f^{-1}(p_0))$ are disjoint from each other and from $\I$.  Then there exists $\delta >0$ such that for all $T\in \T(T_0,\delta)$
\begin{itemize}
 \item $f_T(\P^2\setminus B_\epsilon(f^{-1}(p_0))) \subset B_\epsilon(L_0)$;
 \item $f_T^{-1}(\P^2\setminus B_\epsilon(L_0)) \subset B_\epsilon(f^{-1}(p_0))$.
\end{itemize}
In particular, $f_T(\I)\subset B_\epsilon(L_0)$ and any ergodic $f_T$-invariant measure is supported entirely in $B_\epsilon(L_0)$ or in $B_\epsilon(f_T^{-1}(p_0))$.  

If in addition, condition (C) holds, then we may further arrange that all points in  $B_\epsilon(f^{-1}(p_0))$ are regular for $f_T$ and (therefore) $B_\epsilon(L_0)$ contains all critical values of $f_T$.
\end{proposition}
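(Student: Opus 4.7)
The plan is to push everything through the defining property of $\T(T_0,\delta)$ together with compactness arguments that exploit the disjointness built into (A) and (B). The key observation is that $T_0\circ f$ sends any point whose image under $f$ avoids a small ball around $\p$ into $L_0$; a perturbation $T\in\T(T_0,\delta)$ shifts those images by at most $\delta$, hence into $B_\delta(L_0)$.

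For the first inclusion, I would first show that for sufficiently small $\delta>0$, every $q\in\P^2$ whose image (in the cluster-set sense if $q\in\I$) meets $\overline{B_\delta(\p)}$ lies in $B_\epsilon(f^{-1}(\p))$. If this failed, I would extract a sequence $q_n\notin B_\epsilon(f^{-1}(\p))$ with some value $f(q_n)\to\p$; a subsequential limit $q$ would satisfy either $f(q)=\p$ if $q\notin\I$, contradicting $q\notin B_\epsilon(f^{-1}(\p))$, or $\p\in f(q)$ if $q\in\I$, contradicting (A). Once such a $\delta\leq\epsilon$ is fixed, any $q\notin B_\epsilon(f^{-1}(\p))$ has $f(q)$ (or its cluster values) outside $B_\delta(\p)$, so for $T\in\T(T_0,\delta)$ the image $f_T(q)$ lies within $\delta$ of $T_0(f(q))\in L_0$, placing $f_T(q)$ in $B_\epsilon(L_0)$. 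This also proves $f_T(\I)\subset B_\epsilon(L_0)$. The second inclusion then follows by contraposition: any $q'\notin B_\epsilon(f^{-1}(\p))$ has $f_T(q')\in B_\epsilon(L_0)$, so no point outside $B_\epsilon(L_0)$ can have a preimage outside $B_\epsilon(f^{-1}(\p))$.

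For the measure dichotomy, set $A:=B_\epsilon(L_0)$, $B:=B_\epsilon(f^{-1}(\p))$, and let $\nu$ be any ergodic $f_T$-invariant measure (necessarily with $\nu(\I)=0$). Since $A$ and $B$ are disjoint, the first inclusion gives $f_T^{-1}(\P^2\setminus A)\subset B\cup\I$, so invariance yields $\nu(\P^2\setminus A)\leq\nu(B)$, and since $B\subset\P^2\setminus A$ this forces $\nu(\P^2\setminus(A\cup B))=0$. Because $A\cap B=\emptyset$ and $A\cap\I=\emptyset$, the first inclusion also gives $f_T(A)\subset A$, whence $A\subset f_T^{-1}(A)$ and $A$ is $\nu$-essentially invariant; ergodicity then forces $\nu(A)\in\{0,1\}$, placing $\nu$ in $A$ or $B$ by disjointness.

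For the regularity claim under (C), I would use that $\cdiv(f_T)=\cdiv(f)$ because $T$ is a biholomorphism, while (C) makes $\cdiv(f)$ disjoint from the finite set $f^{-1}(\p)$. Both being closed in $\P^2$, they sit at positive distance, so shrinking $\epsilon$ forces $\cdiv(f_T)\cap B=\emptyset$; then every point of $B$ is regular for $f_T$, and every critical value lies in $f_T(\P^2\setminus B)\subset A$ by the first inclusion. The main technical subtlety throughout is the uniform control near $\I$, where $f$ has cluster curves rather than single-point images; assumption (A) is precisely the hypothesis that keeps those cluster values away from $\p$ and makes the compactness argument go through.
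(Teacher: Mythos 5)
The paper dispatches this proposition with the single line ``Straightforward check,'' so your blind argument supplies precisely the details the authors omit, and the mechanism you use --- a compactness argument to push $f(q)$ away from $\p$ when $q$ is outside $B_\epsilon(f^{-1}(\p))$, followed by the defining $\delta$-closeness of $T$ to $T_0$ to land $f_T(q)$ near $L_0$ --- is certainly the intended one. The treatment of cluster curves at $\I$ via condition~(A), the contrapositive for the second inclusion, and the critical-set argument under~(C) are all correct.

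One small point deserves elaboration: in the measure dichotomy you correctly reach $\nu(\P^2\setminus(A\cup B))=0$ and $\nu(A)\in\{0,1\}$, but the stated conclusion is that $\nu$ is \emph{supported entirely} in $A$ or in $B$, i.e.\ $\supp\nu$ is contained in one of these \emph{open} sets, and this does not follow from $\nu(A)=1$ or $\nu(B)=1$ alone, since $\supp\nu$ could a priori touch $\partial A$ or $\partial B$. The gap is closed by the trapping inclusions themselves. Since $B_\epsilon(L_0)\cap B_\epsilon(f^{-1}(\p))=\emptyset$ forces $\dist(L_0,f^{-1}(\p))\ge 2\epsilon$, one gets $\overline{A}\cap B=\emptyset$ and $\overline{B}\cap A=\emptyset$; hence the first inclusion yields $f_T(\overline{A})\subset A$, and the second yields $f_T^{-1}(\overline{B})\subset B$. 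If $\nu(A)=1$, then $\supp\nu\subset\overline{A}$ and $\supp\nu=f_T(\supp\nu)\subset f_T(\overline{A})\subset A$; if $\nu(B)=1$, then $\supp\nu\subset\overline{B}$ and $\supp\nu\subset f_T^{-1}(\supp\nu)\subset f_T^{-1}(\overline{B})\subset B$. With that addition your proof is complete.
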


\begin{proof} Straightforward check.
\end{proof}

\begin{proposition}\label{PROP_AS}
If conditions (A) and (B) hold and $\delta>0$ is small enough, then for all $T\in\T(T_0,\delta)$, the map $f_T:\P^2\tto\P^2$ is algebraically stable.  
\end{proposition}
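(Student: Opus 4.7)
The plan is to verify the third characterization of algebraic stability in Proposition~\ref{prop:ascriterion}: no exceptional curve of $f_T$ may have a forward $f_T$-orbit that meets $\I(f_T)$. Since $T\in\aut(\P^2)$, we have $\I(f_T)=\I(f)=\I$ and the exceptional curves of $f_T$ coincide with those of $f$. Each exceptional $E$ is collapsed to a single point $f_T(E)=T(f(E))$, so the task reduces to controlling the forward $f_T$-orbit of this image point for each exceptional $E$.

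For each such $E$, Proposition~1.2 of \cite{FS2} gives $E\cap\I\neq\emptyset$, so $f(E)\in f(\I)$. Condition (A) then forces $f(E)\neq \p$, uniformly over the finitely many exceptional curves. Hence for $\delta$ sufficiently small the perturbed image $T(f(E))$ lies within $\delta$ of $T_0(f(E))\in L_0$, and shrinking $\delta$ further we can arrange $f_T(E)\in B_\epsilon(L_0)$ for every exceptional $E$ and every $T\in\T(T_0,\delta)$.

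Finally, we iterate using Proposition~\ref{prop:twotraps}. By (B) together with the standing choice of $\epsilon$, the neighborhood $B_\epsilon(L_0)$ is disjoint from $\I$, and by hypothesis it is also disjoint from $B_\epsilon(f^{-1}(\p))$. The first conclusion of Proposition~\ref{prop:twotraps} therefore guarantees $f_T(y)\in B_\epsilon(L_0)$ for every $y\in B_\epsilon(L_0)$, and induction yields $f_T^n(f_T(E))\in B_\epsilon(L_0)$ for all $n\geq 0$. Since this neighborhood avoids $\I$, the algebraic stability criterion is verified.

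There is essentially no serious obstacle: the statement is little more than a corollary of the trap dynamics packaged in Proposition~\ref{prop:twotraps}. The only subtlety that forces condition (A) into the argument is ruling out $f(E)=\p$ for an exceptional $E$, since otherwise $f_T(E)=T(\p)$ could escape $B_\epsilon(L_0)$ and the trap argument would break down. The geometric fact $f(\E)\subset f(\I)$ combined with (A) closes this loophole.
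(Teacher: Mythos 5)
Your proof is correct, but it reaches the key first step (placing $f_T(E)$ inside $B_\epsilon(L_0)$) by a different observation than the paper. The paper simply notes that every exceptional curve $E$ meets the \emph{line} $L_0$ (Bezout), and by condition (B) the intersection point $q\in E\cap L_0$ is not indeterminate; since $q\in B_\epsilon(L_0)$ is away from the trap $B_\epsilon(f^{-1}(\p))$, Proposition~\ref{prop:twotraps} immediately places $f_T(E)=f_T(q)$ in $B_\epsilon(L_0)$, and the iteration is the same as yours. You instead compute the collapse point $f(E)$ of the unperturbed map, argue that $f(E)\in f(\I)$ because $E$ meets $\I$, invoke condition (A) to rule out $f(E)=\p$, and then use the definition of $\T(T_0,\delta)$ to see that $T(f(E))\approx T_0(f(E))\in L_0$. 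Both routes are valid and short; the paper's is slightly more economical because it applies the trap proposition directly without needing to identify what $f$ does to $E$, while yours makes the role of condition~(A) more visible. One small caveat: the step ``$E\cap\I\neq\emptyset$, so $f(E)\in f(\I)$'' is true but deserves a sentence of justification (take $q_j\in E\setminus\I$ converging to $p\in E\cap\I$; then $f(q_j)\equiv f(E)$ and by definition of the total transform $f(E)\in f(p)$); as written it reads as if it were automatic.
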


\begin{proof}
Choose $\epsilon$ and then $\delta$ as in Proposition \ref{prop:twotraps}.  Since each exceptional curve $C$ for $f$ meets $L_0\subset B_\epsilon(L_0)$, we have $f_T^n(C)\in B_\epsilon(L_0)$ for all $n\geq 1$.  In particular, the forward orbit of $C$ is disjoint from $\I$, i.e. $f_T$ is algebraically stable.
\end{proof}

\begin{corollary}\label{COR_AS}
The map $f_T$ is algebraically stable for all $T\in\aut(\P^2)$ outside a countable union of proper algebraic subsets.
\end{corollary}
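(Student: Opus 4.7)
The approach is to combine the criterion of Proposition~\ref{prop:ascriterion} with the existence result in Proposition~\ref{PROP_AS} to exhibit the set of $T$ for which $f_T$ fails to be algebraically stable as a countable union of proper algebraic subsets. First I would record that $\I(f_T) = \I(f)$ and that the exceptional curves for $f_T$ coincide with those $E_1,\dots,E_k$ of $f$, since $T \in \aut(\P^2)$ is a biholomorphism. Each $E_i$ is collapsed by $f_T$ to the single point $q_i(T) := T(f(E_i))$, which depends linearly on $T$. By Proposition~\ref{prop:ascriterion}, $f_T$ fails to be algebraically stable precisely when, for some $n\ge 0$, some $i \in \{1,\dots,k\}$, and some $p \in \I(f)$, the partial orbit $q_i(T), f_T(q_i(T)),\dots,f_T^{n-1}(q_i(T))$ avoids $\I(f)$ while $f_T^n(q_i(T)) = p$.

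Next I would verify that, for each such triple $(n,i,p)$, the set $Z_{n,i,p}\subset \aut(\P^2)$ of $T$ satisfying this condition lies in a proper algebraic subvariety. In homogeneous coordinates $f_T$ lifts to the polynomial map $F_T(z) = T \cdot (f_1(z), f_2(z), f_3(z))$, whose coefficients depend polynomially (in fact linearly) on the entries of $T$. Iterating $n$ times and evaluating at $q_i(T)$ produces a triple of polynomials in the entries of $T$, which defines a rational map $\Phi_{n,i}:\aut(\P^2)\dashrightarrow \P^2$. The condition $\Phi_{n,i}(T) = p$ cuts out an algebraic subvariety, namely the vanishing locus of the $2\times 2$ minors of the matrix whose columns are $\Phi_{n,i}(T)$ and $p$; its Zariski closure is the desired $Z_{n,i,p}$.

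To see $Z_{n,i,p}$ is proper, I would invoke Proposition~\ref{PROP_AS}, which supplies a nonempty open set $U\subset \aut(\P^2)$ on which $f_T$ is algebraically stable. By construction no $T\in U$ satisfies the condition defining $Z_{n,i,p}$, so $U\cap Z_{n,i,p} = \emptyset$ and hence $Z_{n,i,p}\subsetneq \aut(\P^2)$. The corollary then follows by taking the union of the $Z_{n,i,p}$ over the countable range $n\ge 0$ together with the finite ranges of $i$ and $p$, since a $T$ outside this union necessarily satisfies the indeterminacy-avoidance condition of Proposition~\ref{prop:ascriterion} and hence is algebraically stable. The only subtle point is guaranteeing that $\Phi_{n,i}$ is a genuine rational map rather than being identically indeterminate, but this is ensured precisely because Proposition~\ref{PROP_AS} provides an open set of $T$ on which $\Phi_{n,i}(T)$ evaluates to a well-defined point of $\P^2$.
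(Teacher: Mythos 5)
Your proof is correct and follows essentially the same route as the paper: characterize failure of algebraic stability via Proposition~\ref{prop:ascriterion}, express the failure locus as a countable union of algebraic subsets of $\aut(\P^2)$, and invoke Proposition~\ref{PROP_AS} to conclude that each subset is proper. You spell out the construction of the algebraic conditions (lifting to homogeneous coordinates, forming the rational maps $\Phi_{n,i}$) in more detail than the paper, which simply asserts that $p\in\I(f_T^n)$ is an algebraic constraint on $T$, but the argument is the same.
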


\begin{proof}
Given $n\in\N$ and a point $p\in \P^2$, the condition $p\in\I(f_T^n)$ amounts to an algebraic constraint on $T$.  By Proposition \ref{prop:ascriterion}, $f_T$ fails to be algebraically stable precisely when there exists some (smallest) $n\in\N$ and a point $p$ in the finite set $f_T(\E)$ such that $p\in\I(f_T^n)$.  Accounting for all possible $n\in\N$, we see that $f_T$ is algebraically stable for all $T$ outside a countable union of algebraic subsets of $\aut(\P^2)$.  Proposition \ref{PROP_AS} guarantees that these subsets are all proper.
\end{proof}

\subsection{(No) invariant foliations for maps with equal degrees}

The map $f:\P^2~\tto~\P^2$ is said to `preserve a rational fibration' if it is
rationally semiconjugate to a one dimensional map, i.e. if there is a rational
map $\phi:\P^2\tto \P^1$ and a rational function $\check f:\P^1 \to \P^1$ such
that $\phi\circ f = \check f\circ \phi$ (at all points where both compositions
are defined).  If either the base map $\check f$, or the fiber map
$f_x:\phi^{-1}(x) \to \phi^{-1}(f(x))$ ($x\in\P^1$ a general point) has degree
one, then we have (see e.g. Lemma 4.1 in \cite{ddg1}) equality $\lambda_1(f) =
\lambda_2(f)$ of the dynamical degrees of $f$. 

It was demonstrated recently \cite{KPR} that $\lambda_1(f) =
\lambda_2(f)$ does not imply $f$ preserves a rational fibration, nor indeed even a singular holomorphic foliation.
Specifically, the following criterion appears in that paper:

\begin{theorem}[Theorem 4.1' from \cite{KPR}]\label{THM_CONDITIONS_FOR_NO_FOLIATION}
Assume that $f$ has an indeterminate point $q$ such that
  \begin{enumerate}
    \item there are irreducible curves $C_n\subset f^n(q)$ with $\limsup \, \deg(C_n) = \infty$; and
    \item $q$ has an infinite preorbit along which $f$ is a finite holomorphic map.
  \end{enumerate}
  Then no iterate of $f$ preserves a singular holomorphic foliation.
\end{theorem}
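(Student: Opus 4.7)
My plan is to argue by contradiction using the Darboux--Jouanolou theorem on algebraic integrability of foliations on $\P^2$. Suppose some iterate $f^m$ preserves a singular holomorphic foliation $\mathcal{F}$. I first reduce to the case $m=1$: hypothesis (1) passes to $f^m$ via the subsequence $C_{mn}\subset (f^m)^n(q)$, and hypothesis (2) passes via retention of every $m$-th element of the preorbit, since any composition of finite holomorphic germs is again finite holomorphic. So I may assume $f$ itself preserves $\mathcal{F}$.

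The crux is to show that each $C_n$ is an invariant algebraic curve of $\mathcal{F}$. Once this is established, hypothesis (1) furnishes infinitely many irreducible invariant curves of unbounded degree. The Darboux--Jouanolou theorem then produces a rational first integral $\phi: \P^2 \tto \P^1$ for $\mathcal{F}$; but every irreducible invariant curve of $\mathcal{F}$ is contained in some fiber of $\phi$ and so has degree at most $\deg \phi$. This contradicts $\limsup \deg C_n = \infty$ and closes the argument.

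To prove the invariance of the $C_n$, I would resolve the indeterminacy of $f^n$ at $q$ by a sequence of blowups $\pi_n: \tilde X_n \to \P^2$, obtaining a holomorphic extension $\widetilde{f^n}: \tilde X_n \to \P^2$ such that some irreducible component $E_n'$ of the exceptional divisor over $q$ satisfies $\widetilde{f^n}(E_n') = C_n$. Since $f^n$ preserves $\mathcal{F}$, the pullback $\pi_n^*\mathcal{F}$ is carried into $\mathcal{F}$ by $\widetilde{f^n}$; hence if $E_n'$ is invariant under $\pi_n^*\mathcal{F}$, then its image $C_n$ is invariant under $\mathcal{F}$. The remaining issue is thus to exclude the possibility that the blowup is dicritical along $E_n'$, i.e.\ that $\pi_n^*\mathcal{F}$ is transverse to $E_n'$.

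The main obstacle is precisely this non-dicriticality, and here hypothesis (2) is essential. The infinite preorbit $\ldots, p_{-2}, p_{-1}, p_0 = q$ along which $f$ is finite holomorphic yields, for each sufficiently large $j$ with $p_{-j}$ lying outside the finite singular set of $\mathcal{F}$, a well-defined local leaf $L_{-j}$ through $p_{-j}$. Finite holomorphicity of $f^j$ at $p_{-j}$ makes that map open, so $f^j(L_{-j})$ is a germ of analytic curve through $q$ that must be tangent to $\mathcal{F}$ wherever $\mathcal{F}$ is regular. The consistency of infinitely many such germs at $q$ should select a distinguished separatrix of $\mathcal{F}$ there and force every blowup needed to resolve $f^n$ at $q$ to be non-dicritical, making $E_n'$ invariant under $\pi_n^*\mathcal{F}$. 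Making this geometric picture rigorous is the delicate part; once it is in hand, the remainder of the proof is bookkeeping.
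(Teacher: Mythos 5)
The paper does not actually supply a proof of this statement: it is imported verbatim as Theorem~4.1$'$ of \cite{KPR}, so there is no in-paper argument to compare yours against. I can only assess your proposal on its own terms, and there it has a real gap.

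Your overall architecture---reduce to $m=1$, argue that the $C_n$ are $\mathcal{F}$-invariant, invoke Darboux--Jouanolou to produce a rational first integral $\phi$, then contradict $\limsup\deg C_n=\infty$ because invariant curves have degree bounded by $\deg\phi$---is a sensible route, and the reduction to $m=1$ and the closing contradiction are both fine. The problem is the step you yourself flag as delicate: showing that each $C_n$ is an invariant curve of $\mathcal{F}$. Your mechanism for this is to use the infinite preorbit to ``select a distinguished separatrix'' at $q$ and then declare all blowups in the resolution of $f^n$ at $q$ to be non-dicritical. This does not work as stated. First, if $q$ (or an infinitely near point appearing in the resolution) \emph{is} a dicritical singularity, then uncountably many invariant germs pass through $q$; since $f$ is finite and holomorphic along the preorbit and preserves $\mathcal{F}$, pulling back those germs shows each $p_{-j}$ is itself dicritical, hence singular. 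In that case there are \emph{no} regular preorbit points $p_{-j}$ at all, so the germs $f^j(L_{-j})$ you want to ``consist with'' one another do not exist, and your construction cannot even start. Second, and conversely, the very fact that dicriticality of $q$ forces dicriticality of every $p_{-j}$ is exactly the lever that \emph{should} be used: it contradicts the finiteness of $\mathrm{Sing}(\mathcal{F})$ against the infinitude of the preorbit, directly ruling out a dicritical $q$. You have the right raw material (hypothesis~(2) plus finiteness of the singular set) but you aim it at producing a separatrix rather than at the more robust counting contradiction.

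Even after patching that, a second issue remains: excluding dicriticality at the single point $q$ is not enough to conclude that $C_n$ is $\mathcal{F}$-invariant for all $n$. The curves $C_n\subset f^n(q)$ arise from components of the exceptional divisor of a \emph{sequence} of blowups over $q$, and dicritical components can appear at infinitely near points even when $q$ itself is non-dicritical. Your sketch treats only the first blowup. To make the invariance of every $C_n$ rigorous you would need to control dicriticality along the whole tower of infinitely near points required to resolve $f^n$ at $q$, or else replace the Darboux--Jouanolou route by an argument that sidesteps dicriticality altogether. As written, the proposal establishes neither, so the central claim ``each $C_n$ is $\mathcal{F}$-invariant'' is unsupported and the proof is incomplete.
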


\noindent
We refer the reader to \cite{FP,KPR}, and the references therein for more details
about the transformation of singular holomorphic foliations by rational maps.

The following lemma shows that the condition (D) can be readily satisfied:

\begin{lemma}
Suppose that $f$ has equal algebraic and topological degrees $d=\lambda_2\geq 2$.  Let $p\in\P^2$ be a point whose image $f(p)$ is a regular value of $f$, with $\lambda_2$ distinct $f$-preimages, all outside $\I$.  Let $L,L'$ be lines through $p$ that do not meet $\I$ and have distinct images $f(L)\neq f(L')$.  Then one of the two irreducible curves $f(L)$ or $f(L')$ is not a line.  In particular, for almost all lines $L\subset\P^2$, we have that $\deg f(L)~\geq~2$.
\end{lemma}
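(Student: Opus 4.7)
The plan is to prove the main assertion by contradiction: assume both $M:=f(L)$ and $M':=f(L')$ are lines, and derive a contradiction by counting preimages of $f(p)$. The ``in particular'' clause then follows from a standard genericity argument.

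The key input is the pushforward identity recalled in Section~\ref{SEC_REF}: since $L$ is disjoint from $\I(f)$ and hence not exceptional, one has $f_*L = \nu_L\cdot M$, where $\nu_L$ is the topological degree of the restriction $f|_L\colon L\to M$. Comparing degrees via $\deg f_*L = d\cdot\deg L = d$ yields $\nu_L\cdot\deg M = d$, so the assumption $\deg M = 1$ forces $\nu_L = d$, and symmetrically $\nu_{L'} = d$. Because $f(p)$ is a regular value of $f$ with $d$ distinct preimages outside $\I$, the map $f$ is locally biholomorphic at every $q\in f^{-1}(f(p))\cap L$, and therefore $f|_L$ is locally injective at each such $q$; consequently $f(p)$ is itself a regular value of the degree-$\nu_L$ map $f|_L\colon L\to M$, so its fiber in $L$ contains exactly $\nu_L = d$ distinct points. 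Thus every preimage of $f(p)$ lies on $L$, and by the same argument every preimage lies on $L'$, forcing $f^{-1}(f(p))\subset L\cap L'=\{p\}$ and contradicting $|f^{-1}(f(p))| = d\geq 2$.

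For the ``in particular'' statement, the set $S:=\{L\in\check{\P}^2 : \deg f(L)\leq 1\}$ is a constructible subset of the dual projective plane of lines. If $S$ is not contained in a proper algebraic subset of $\check{\P}^2$, then it contains a non-empty Zariski-open set $U$. I would then pick $p\in\P^2$ generic enough that $f(p)$ is a regular value with $d$ distinct preimages off $\I$; almost every line through $p$ lies in $U$ and avoids the finite set $\I$, while dominance of $f$ prevents every such line from being mapped into a single common line. Hence one may select two such lines $L\neq L'$ with $f(L)\neq f(L')$, contradicting the main claim. I expect the main obstacle to be recognizing that the pushforward identity converts the \emph{geometric} hypothesis ``$f(L)$ is a line'' into the \emph{dynamical} statement that $f|_L$ has topological degree $d$; once that translation is in hand, the rest of the argument is just bookkeeping.
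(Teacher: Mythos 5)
Your proof is correct and follows essentially the same line as the paper's: from $L\cap\I=\emptyset$ and $\deg f_*L = d$ you deduce $f|_L$ is $d$-to-$1$ onto a putative line $M$, then use $\lambda_2=d$ to force all preimages of $f(p)$ onto $L$ (and likewise onto $L'$), contradicting $L\cap L'=\{p\}$. The only cosmetic difference is that the paper first upgrades this to the curve-level statement $f^{-1}(f(L))=L$ before intersecting, whereas you count the fiber over $f(p)$ directly using the regular-value hypothesis; you also supply a genericity argument for the ``in particular'' clause that the paper leaves unstated.
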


\begin{proof}
Suppose on the contrary that both images are lines.  Then the facts that $L\cap \I = \emptyset$ and that $f_*L = dL$ as divisors imply that $f:L\to f(L)$ is $d$-to-1.  Since $\lambda_2 = d$, it follows that $f^{-1}(f(L)) = L$.  Likewise $f^{-1}(f(L')) = L'$.  But then $f^{-1}(f(p))  = f^{-1}(f(L)\cap f(L')) \subset  L\cap L' = \{p\}$ which contradicts the hypothesis that $f(p)$ has $\lambda_2\geq 2$
distinct preimages.
\end{proof}

\begin{lemma}
\label{lem:curvegrows}
Suppose that conditions (A), (B) and (D) hold.  For any sufficiently small $\epsilon>0$ there exists $\delta>0$ such that for any $T\in \T(T_0,\delta)$, any irreducible algebraic curve $C\subset B_\epsilon(L_0)$, and any $n\geq 0$, the image $f_T^n(C)$ is irreducible, lies in $B_\epsilon(L_0)$ and has degree at least $2^n\deg C$.
\end{lemma}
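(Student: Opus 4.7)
The plan is to induct on $n$, the case $n=0$ being trivial. Set $C_n:=f_T^n(C)$ and assume inductively that $C_n$ is irreducible, contained in $B_\epsilon(L_0)$, and satisfies $\deg C_n\geq 2^n\deg C$; I need the same for $C_{n+1}:=f_T(C_n)$. Irreducibility is automatic: for $\epsilon$ small, $B_\epsilon(L_0)\cap\I=\emptyset$ by (B), so $C_n$ meets no indeterminate points and hence is not exceptional (since exceptional curves must meet $\I$); consequently $f(C_n)$ is an irreducible curve, and so is $f_T(C_n)=T(f(C_n))$. Containment $C_{n+1}\subset B_\epsilon(L_0)$ follows from Proposition~\ref{prop:twotraps}: condition (A) gives $L_0\cap f^{-1}(p_0)=\emptyset$, hence $B_\epsilon(L_0)$ and $B_\epsilon(f^{-1}(p_0))$ are disjoint for $\epsilon$ small, and the proposition produces a $\delta$ for which $f_T(B_\epsilon(L_0))\subset B_\epsilon(L_0)$.

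The substantive task is the degree bound. Since $T$ is linear, $\deg f_T(C_n)=\deg f(C_n)$ and the topological degree $k_n$ of $f_T|_{C_n}$ agrees with that of $f|_{C_n}$. From $C_n\cap\I=\emptyset$ one has $f_*C_n = k_n f(C_n)$ as divisors, and combining with $\deg f_*C_n = d\deg C_n$ yields $\deg f(C_n) = d\deg C_n/k_n$. The desired bound $\deg C_{n+1}\geq 2\deg C_n$ thus reduces to showing $k_n\leq d/2$. Writing $k_0 := \deg(f|_{L_0})=d/\deg f(L_0)$, condition (D) already gives $k_0\leq d/2$, so it suffices to prove the key sub-claim: \emph{for $\epsilon$ sufficiently small and every irreducible curve $\Gamma\subset B_\epsilon(L_0)$, one has $\deg(f|_\Gamma)\leq k_0$.}

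My approach to the sub-claim is a local sheet count. Decompose $f^{-1}(f(L_0)) = L_0\cup D_1\cup\cdots\cup D_r$ with the $D_i$ irreducible and distinct from $L_0$, so each $D_i\cap L_0$ is finite. Let $S\subset f(L_0)$ be the finite set collecting the critical values of $f|_{L_0}$, the images $f\bigl(\bigcup_i(D_i\cap L_0)\bigr)$, and $f(L_0\cap\cdiv(f))$. For $y_0\in f(L_0)\setminus S$, the fiber $f^{-1}(y_0)$ consists of $k_0$ distinct preimages on $L_0$ at which $f$ is a local biholomorphism, together with $d-k_0$ preimages on $\bigcup_i D_i$ at positive distance from $L_0$; shrinking $\epsilon$, the latter preimages lie outside $\overline{B_\epsilon(L_0)}$ uniformly on any compact subset of $f(L_0)\setminus S$. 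The implicit function theorem then propagates this picture to an open neighborhood $W\subset\P^2$ of $f(L_0)\setminus S$ on which $\#\bigl(f^{-1}(y)\cap\overline{B_\epsilon(L_0)}\bigr)=k_0$. Since $\Gamma\subset B_\epsilon(L_0)$ forces $f(\Gamma)\subset B_{\omega(\epsilon)}(f(L_0))$ with $\omega(\epsilon)\to 0$, and since $f(\Gamma)$ is an irreducible algebraic curve which cannot be contained in a small neighborhood of the finite set $S$, a generic $y\in f(\Gamma)$ lies in $W$; this gives $k_n=\#(f^{-1}(y)\cap\Gamma)\leq \#\bigl(f^{-1}(y)\cap\overline{B_\epsilon(L_0)}\bigr)=k_0$, closing the induction. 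The principal obstacle I foresee is coordinating the small parameters---$\epsilon$, the image-tube width $\omega(\epsilon)$, and the uniform radius of $W$ near points of $S$---so that the sheet count is genuinely uniform over every irreducible $\Gamma\subset B_\epsilon(L_0)$ and the inductive estimates do not degrade as $n$ grows.
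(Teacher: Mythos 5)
Your proposal takes a genuinely different route from the paper, and the core sub-claim as stated has a gap.

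\textbf{Comparison of approaches.} The paper does \emph{not} compute $\deg f(C)$ via the topological degree of $f|_C$. Instead, since $\deg f(L_0)\geq 2$, it fixes a line $L$ through $\p$ meeting $f(L_0)$ transversely in at least two points, arranges that $f_T^{-1}(L)\cap B_\epsilon(L_0)$ consists of $k\geq 2$ properly embedded disks with disjoint images in $L$, observes that any $C\subset B_\epsilon(L_0)$ of degree $\deg C$ must intersect each such disk in $\deg C$ points (by a homotopy/Bezout argument relative to the central-projection fibration of the tube), and concludes $\deg f_T(C) = f_T(C)\cdot L \geq k\deg C \geq 2\deg C$. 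Your route instead reduces the degree estimate to the bound $\deg(f_T|_{C_n})\leq k_0 := \deg(f|_{L_0}) = d/\deg f(L_0)\leq d/2$ and then argues that the fiber of $f$ over a generic point of $f(L_0)$ has exactly $k_0$ sheets inside $\overline{B_\epsilon(L_0)}$. Both arguments are ultimately controlling the number of $f_T$-preimages that can lie in the tube, but the paper does this once, at finitely many points, by intersecting with a single well-chosen line, which avoids having to quantify uniformly over all of $f(L_0)\setminus S$. That uniformity difficulty is exactly the obstacle you flag in your last paragraph, and it does not arise in the paper's version.

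\textbf{The gap.} Your construction of the finite set $S$ includes $f(L_0\cap\cdiv(f))$ and tacitly assumes this is finite, i.e., that $L_0\not\subset\cdiv(f)$. Conditions (A), (B), (D) exclude $L_0$ being exceptional but do not exclude $L_0$ being a ramification curve, and nothing in your argument rules this out. If $L_0$ were ramified of order $m\geq 2$, then for $y$ near $f(L_0)$ (but off it) the number of preimages of $y$ in $B_\epsilon(L_0)$ would be $m\,k_0$, not $k_0$: near each of the $k_0$ preimages on $L_0$ one has $m$ sheets, and the IFT step in your sub-claim fails because $f$ is not a local biholomorphism there. Then $\deg(f_T|_{C_n})\leq m k_0$ only gives $\deg f_T(C_n)\geq \deg f(L_0)\deg C_n/m$, which need not be $\geq 2\deg C_n$. (The paper's proof also relies on $L_0\not\subset\cdiv(f)$ in order for the $k$ disks to map biholomorphically to disjoint pieces of $L$, but it is much closer to the surface that this is what is needed, and it is readily arranged since only finitely many lines lie inside $\cdiv(f)$.) To make your argument complete you should either add the hypothesis $L_0\not\subset\cdiv(f)$, or point out that $L_0$ can always be chosen this way, since $\cdiv(f)$ contains at most finitely many lines and the lemma preceding \ref{lem:curvegrows} shows (D) holds for almost all lines.
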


\begin{proof}
Recall that $\epsilon$ is sufficiently small that $B_\epsilon(L_0)
\cap \I = \emptyset$.
Choose $\delta>0$ small enough that $T\in\T(T_0,\delta)$
guarantees that $f_T(\overline{B_\epsilon(L_0)}) \subset B_\epsilon(L_0)$.
This assures that $f_T^n|_{B_\epsilon(L_0)}: B_\epsilon(L_0) \rightarrow
B_\epsilon(L_0)$ is holomorphic for every $n \geq 1$.  Therefore, for any
irreducible curve $C \subset B_\epsilon(L_0)$ the irreducible curve $f_T^n(C)$ is just the usual set-theoretic image
of $C$ under $f_T^n$.
Since there are no exceptional curves for $f_T$ contained in $B_\epsilon(L_0)$, the image $f_T^n(C)$ is non-trivial.

It therefore suffices to show that $\deg f_T(C) \geq 2 \deg C$.  Since $\deg f(L_0)\geq 2$, we can find a line $L$  through $p_0$  such that $L\cap f(L_0)$ contains at least two points, and all of them are transverse intersections.  Shrinking $\epsilon$ (and therefore $\delta$) if necessary, we may assume that $f_T^{-1}(L)\cap B_\epsilon(L_0)$ is a union of $k\geq 2$ disks $\mathcal{D}_1,\dots,\mathcal{D}_k$ such that each is properly embedded in $B_\epsilon(L_0)$ and the images $f_T(\mathcal{D}_j)\subset L$ are mutually disjoint.  Since $C\subset B_\epsilon(L_0)$, we have on topological grounds that $C$ meets each disk $\mathcal{D}_j$ in at least $\deg C$ points (counted with multiplicity).  Hence $f(C)$ meets $L$ in at least $k\deg C$ points counted with multiplicity.  Thus $\deg f(C) \geq k\deg C \geq 2\deg C$.
\end{proof}

\begin{theorem} \label{THM_NO_FOLIATION}
Suppose that $\I\neq \emptyset$ and that conditions (A)-(D) hold.  Then for any $\delta>0$ small enough and any $T\in\T(T_0,\delta)$, there is no $f_T$-invariant singular holomorphic foliation of $\P^2$.
\end{theorem}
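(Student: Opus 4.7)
The plan is to apply Theorem~\ref{THM_CONDITIONS_FOR_NO_FOLIATION} to $f_T$, and to that end I need to (i) choose an indeterminate point $q$ of $f_T$ whose iterated image curves grow in degree, and (ii) construct an infinite preorbit of $q$ on which $f_T$ is everywhere finite and holomorphic. Since $\I(f_T)=\I(f)\neq\emptyset$, I pick any $q\in\I$. Then I pick $\epsilon>0$ small enough that Proposition~\ref{prop:twotraps}, Lemma~\ref{lem:curvegrows}, and the disjointness observations below all apply, and afterwards pick $\delta>0$ accordingly.

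For (i), I use that the ``image'' $f_T(q)=T(f(q))$ is by definition a non-trivial algebraic curve, and by Proposition~\ref{prop:twotraps} it is contained in $B_\epsilon(L_0)$. Let $C_1$ be any irreducible component of $f_T(q)$, so $C_1\subset B_\epsilon(L_0)$. Setting $C_n:=f_T^{n-1}(C_1)$, Lemma~\ref{lem:curvegrows} gives that each $C_n$ is an irreducible curve in $B_\epsilon(L_0)$ of degree at least $2^{n-1}\deg C_1$, and by construction $C_n\subset f_T^n(q)$. Thus hypothesis~(1) of Theorem~\ref{THM_CONDITIONS_FOR_NO_FOLIATION} is verified.

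The main work is (ii). Note that since $q\in\I$ and $B_\epsilon(L_0)\cap\I=\emptyset$, we have $q\notin B_\epsilon(L_0)$, so by the second bullet of Proposition~\ref{prop:twotraps}, $f_T^{-1}(q)\subset B_\epsilon(f^{-1}(\p))$. Moreover, condition~(C) together with $T$ being close to $T_0$ ensures that $T^{-1}(q)$ lies close to $\p$, whence $f_T^{-1}(q)=f^{-1}(T^{-1}(q))$ is nonempty (in fact consists of $\lambda_2\geq 2$ points close to $f^{-1}(\p)$). The key structural fact I need is that $B_\epsilon(f^{-1}(\p))$ is disjoint from both $\I$ and the exceptional set $\E$. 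Disjointness from $\I$ is built into the hypotheses of Proposition~\ref{prop:twotraps}. For $\E$: any irreducible exceptional curve $V$ must meet $\I$, and $f(V)$ is a single point which, as a limit of $f$-values along $V$ near any $p\in V\cap\I$, also lies in $f(p)\subset f(\I)$; thus $f(\E)\subset f(\I)$, and condition~(A) gives $\p\notin f(\I)$, so $f^{-1}(\p)\cap\E=\emptyset$ and shrinking $\epsilon$ achieves $B_\epsilon(f^{-1}(\p))\cap\E=\emptyset$.

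With these observations I inductively pick $q_{n+1}\in f_T^{-1}(q_n)\subset B_\epsilon(f^{-1}(\p))$ starting from $q_0=q$; each $q_{n+1}$ (for $n\geq 0$) lies outside $\I\cup\E$, so $f_T$ is holomorphic and finite at $q_{n+1}$, producing the required infinite preorbit. Theorem~\ref{THM_CONDITIONS_FOR_NO_FOLIATION} then precludes any $f_T$-invariant singular holomorphic foliation. The main obstacle I anticipate is the simultaneous bookkeeping for $\epsilon$: it must be small enough that the tubes $B_\epsilon(L_0)$ and $B_\epsilon(f^{-1}(\p))$ are disjoint from each other and from $\I$, that the curve-growth of Lemma~\ref{lem:curvegrows} applies, and that $B_\epsilon(f^{-1}(\p))$ further avoids $\E$. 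This is exactly the structural role played by conditions~(A)--(D), and once they are in place the argument becomes a clean combination of Proposition~\ref{prop:twotraps} and Lemma~\ref{lem:curvegrows}.
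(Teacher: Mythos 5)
Your proposal is correct and follows essentially the same route as the paper: apply Theorem~\ref{THM_CONDITIONS_FOR_NO_FOLIATION} at an arbitrary $q\in\I$, with Lemma~\ref{lem:curvegrows} giving the unbounded degree growth of image curves and Proposition~\ref{prop:twotraps} trapping the backward orbit of $q$ in $B_\epsilon(f^{-1}(\p))$. The only difference is cosmetic: for condition~(2) the paper simply invokes the last sentence of Proposition~\ref{prop:twotraps} (under condition~(C), every point of $B_\epsilon(f^{-1}(\p))$ is a \emph{regular} point of $f_T$, which is stronger than ``finite holomorphic''), whereas you re-derive by hand that $B_\epsilon(f^{-1}(\p))$ avoids $\I\cup\E$ using the observation $f(\E)\subset f(\I)$ and condition~(A) -- a correct but slightly more roundabout path to the same place.
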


Note that $\I\neq \emptyset$ as soon as $\lambda_2 < d^2$, e.g. in the case of interest here $\lambda_2 = d\geq 2$.

\begin{proof}
Choose $\epsilon>0$ and then $\delta>0$ small enough so that all conclusions of
Proposition \ref{prop:twotraps} and Lemma \ref{lem:curvegrows} apply.  Fix any
$q\in \I$, any $T\in \mathcal{T}(T_0,\delta)$, and any non-trivial irreducible algebraic curve $C \subset f_T(q) \subset B_\epsilon(L_0)$.  
Lemma
\ref{lem:curvegrows} tells us that the forward images $f_T^{n-1}(C) \subset f_T^n(q)$ 
have degree tending to infinity with $n$.  Meanwhile, Proposition \ref{prop:twotraps}
implies that $f_T^{-n}(q)$ consists of exactly $\lambda_2^n$ regular points for~$f_T$.  Theorem \ref{THM_CONDITIONS_FOR_NO_FOLIATION} therefore guarantees that
there is no $f_T$-invariant foliation.
\end{proof}

\begin{corollary} 
\label{COR_GENERIC_NIF}
Let $f:\P^2\tto\P^2$ be a rational map with equal algebraic and topological
degrees $\lambda:= d(f) = \lambda_2(f)\geq 2$.  Then, the map $f_T:= T \circ f:
\P^2~\tto~\P^2$ admits no invariant foliation for any $T\in\aut(\P^2)$ outside
a countable union of proper algebraic subsets.
\end{corollary}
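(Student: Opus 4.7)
The plan is to follow the template of Corollary \ref{COR_AS} and produce, for each integer $k \geq 2$, an algebraic subset $Z_k \subset \aut(\P^2)$ containing every $T$ for which $f_T$ preserves a foliation whose defining twisted $1$-form lives in $H^0(\P^2, \Omega^1_{\P^2}(k))$. Properness of each $Z_k$ will be ensured by the existence, supplied by Theorem \ref{THM_NO_FOLIATION}, of a non-empty open set of $T$'s admitting no invariant foliation at all.

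For each $k \geq 2$, let $W_k := H^0(\P^2, \Omega^1_{\P^2}(k))$. Every singular holomorphic foliation of $\P^2$ is defined, up to scalar, by a non-zero $\omega \in W_k$ with codimension-two zero locus for exactly one $k$ (namely the degree of the foliation plus two), and $f_T$ preserves this foliation precisely when $f_T^*\omega \wedge \omega = 0$. Because the entries of $f_T^*\omega$ depend polynomially on $T$ and homogeneously on $\omega$, the incidence set
\[
V_k := \bigl\{(T, [\omega]) \in \aut(\P^2) \times \P(W_k) : f_T^*\omega \wedge \omega = 0 \bigr\}
\]
is Zariski-closed. Since $\P(W_k)$ is projective, the projection $\pi_k : V_k \to \aut(\P^2)$ is proper, and its image $Z_k := \pi_k(V_k)$ is therefore Zariski-closed in $\aut(\P^2)$.

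Next I would pick $L_0$, $\p$, and $T_0$ satisfying conditions (A)--(D) and take $\delta > 0$ small enough that Theorem \ref{THM_NO_FOLIATION} applies; this is feasible using the preceding lemma on $\deg f(L)$ and generic choices of $L_0$ and $\p$. Fixing any $T^* \in \T(T_0, \delta)$, that theorem guarantees $f_{T^*}$ admits no invariant singular holomorphic foliation. Were $T^* \in Z_k$ for some $k$, there would exist a non-zero $\omega \in W_k$ with $f_{T^*}^*\omega \wedge \omega = 0$; writing $\omega = g\omega'$ with $g$ homogeneous and $\omega'$ having codimension-two zero locus, dominance of $f_{T^*}$ yields $g \cdot (g \circ f_{T^*}) \not\equiv 0$ so that $f_{T^*}^*\omega' \wedge \omega' = 0$, making $\omega'$ the defining form of an honest $f_{T^*}$-invariant foliation — a contradiction. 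Hence $T^* \notin Z_k$ for every $k$, so each $Z_k$ is a proper algebraic subset.

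Since any $T$ for which $f_T$ preserves some singular holomorphic foliation must lie in $Z_k$ for some $k \geq 2$, the countable union $\bigcup_{k \geq 2} Z_k$ is the desired countable union of proper algebraic subsets of $\aut(\P^2)$ outside of which $f_T$ admits no invariant foliation. The most delicate point I anticipate is ensuring that $Z_k$ is algebraic rather than merely constructible; this is exactly where the projectivity of $\P(W_k)$ enters, making $\pi_k$ proper and hence its image closed, so that one avoids the complications that would arise from invoking Chevalley's theorem alone.
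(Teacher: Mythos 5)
Your proof is correct, and it takes a genuinely different route from the paper. The paper's argument works directly with the criterion of Theorem~\ref{THM_CONDITIONS_FOR_NO_FOLIATION}, decomposing by iterate $n$: for each fixed $n$ it shows that the set of $T$ for which the $n$-th piece of that criterion fails (either $f_T^{-n}(q)$ loses a preimage or hits $\cdiv(f)\cup f_T(\I)$, or $f_T^{n-1}(C_T)$ becomes reducible or fails to attain maximal degree) is a proper algebraic subset, properness being witnessed by the specific $S$ produced in the proof of Theorem~\ref{THM_NO_FOLIATION}. Your argument instead decomposes by the degree $k$ of the twisted $1$-form defining the foliation: the incidence variety $V_k\subset\aut(\P^2)\times\P(W_k)$ cut out by $f_T^*\omega\wedge\omega=0$ is Zariski-closed, its image $Z_k$ is closed by elimination theory since $\P(W_k)$ is projective, and each $Z_k$ is proper because the $T^*$ from Theorem~\ref{THM_NO_FOLIATION} cannot lie in it (your factorization $\omega=g\omega'$ correctly reduces a divisorially-degenerate invariant form to an honest invariant foliation, using dominance of $f_{T^*}$). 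Your route is cleaner in that it needs only \emph{one} good perturbation plus the completeness of the parameter space of bounded-degree foliations, and avoids re-verifying the KPR criterion for generic $T$; the trade-off is that it leans on the classification of foliations of $\P^2$ by sections of $\Omega^1_{\P^2}(k)$ with codimension-two zero locus, whereas the paper's route stays entirely within the elementary curve-counting and Weierstrass-preparation machinery already assembled in the preceding lemmas.
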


\begin{proof}
Let $q \in \I$ be any indeterminate point and let $C \subset f(q)$ be some chosen irreducible curve.
This induces a choice $C_{T} := T(C)$ of an irreducible component of $f_T(q)$ for each $T\in\aut(\P^2)$.
  In the proof of Theorem \ref{THM_NO_FOLIATION} we saw that there exists $S \in \aut(\P^2)$
so that 
\begin{enumerate}
\item 
for each $n \geq 1$, $f_S^{-n}(q)$ consists of $\lambda^n$ preimages each of which is a regular point for $f_S^n$, and 
\item for each $n \geq 1$, $f_S^{n-1}(C_{S}) \subset f_S^n(q)$ is an irreducible algebraic curve with $\deg(f_S^{n-1}(C_{S})) \rightarrow~\infty$, as $n \rightarrow \infty$.
\end{enumerate}
These two conditions imply the (weaker) criteria laid out in Theorem \ref{THM_CONDITIONS_FOR_NO_FOLIATION}.  To establish the present corollary, it will suffice to fix both $q\in\I$ and $n\in\N$ and show that the two conditions continue to hold when $S$ is replaced by any $T\in\aut(\P^2)$ outside of some algebraic subset.  Existence of $S$ implies we may take the subset to be proper.

Note that for any subvariety $V\subset\P^2$, the condition that $q\in f_T^n(V)$ amounts to an algebraic constraint on $T$.  Taking $V = \cdiv(f) \cup f_T(\I)$, we obtain that
$f_T^{-n}(q)$ consists of $\lambda_2^n$ distinct preimages, each of which is
a regular point for $f_T^n$, for all $T\in\aut(\P^2)$ outside some algebraic
subset.

For fixed $n$, the set of $T$ for which $f_T^{n-1}(C_T)$ is reducible is again
an algebraic subset of $\aut(\P^2)$.  The degree of $f_S^{n-1}(C_{S})$ is equal
to the number of set theoretic intersections $\#(f_S^{n-1}(C_{S})\cap L)$ of
$f_S^{n-1}(C_{S})$ with some line $L\subset\P^2$.   Therefore, for $T$ outside
of some proper algebraic set $\#(f_T^{n-1}(C_{T})\cap L)$ will be finite.
Moreover, it follows from the Weierstrass Preparation Theorem that as a
function of $T$, the quantity $\#(f_T^{n-1}(C_{T}) \cap L)$ is finite,
constant and maximal (in particular at least $\#(f_S^{n-1}(C_{S}) \cap L)$),
off another proper algebraic subset of $\aut(\P^2)$.  We conclude that $\deg
f_T^{n-1}(C_{T}) \to\infty$ for all $T$ outside a countable union of such
sets.  \end{proof}

\subsection{The saddle measure}

Recall that a rational map $f_0:\P^1\to\P^1$ is \emph{hyperbolic}, i.e. uniformly expanding on its Julia set, if all critical points of $f_0$ lie in basins of attracting periodic points.

\begin{proposition}
\label{prop:hypmap}
Let $L_0\subset \P^2$ be any non-exceptional line disjoint from $\I$ and $\p\in\P^2$ be any point outside $L_0$ and $f(L_0)$.  Then one can choose a linear map $T_0:\P^2\setminus\{\p\}\to L_0$ so that $T_0\circ f|_{L_0}$ is a degree $d$ hyperbolic rational self-map of $\P^1$ (i.e. of $L_0$).
\end{proposition}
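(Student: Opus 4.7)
The plan is to observe that, with $L_0$ and $\p$ fixed, every linear map $T_0:\P^2\setminus\{\p\}\to L_0$ is a post-composition of the canonical central projection by an automorphism of $L_0$, and then to exhibit one such post-composition that produces an attracting fixed point absorbing every critical orbit.

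First I would identify the allowed maps $T_0$. Let $\pi_\p:\P^2\setminus\{\p\}\to L_0$ denote central projection (well-defined because $\p\notin L_0$). The three hypotheses combine to give that $g_0:=\pi_\p\circ f|_{L_0}$ is a well-defined holomorphic self-map of $L_0\cong\P^1$: $L_0\cap\I=\emptyset$ makes $f|_{L_0}$ holomorphic, non-exceptionality of $L_0$ makes $f(L_0)$ a curve, and $\p\notin f(L_0)$ makes $\pi_\p$ defined on $f(L_0)$. A B\'ezout count shows $g_0$ has degree $d$: for generic $q\in L_0$, $g_0^{-1}(q)=L_0\cap f^{-1}(\ell_q)$ where $\ell_q$ is the line through $\p$ and $q$, and $\deg f^{-1}(\ell_q)=d$. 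Since any linear $T_0:\P^2\setminus\{\p\}\to L_0$ factors through the pencil of lines through $\p$, we must have $T_0=\psi\circ\pi_\p$ for a unique $\psi\in\aut(L_0)$, so the proposition reduces to finding $\psi\in\aut(\P^1)$ such that $\psi\circ g_0$ is hyperbolic.

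Next I would use only one of the three complex parameters of $\aut(L_0)$. Pick an affine coordinate $z$ on $L_0$ whose point at infinity is neither a pole of $g_0$ nor one of the finitely many critical values of $g_0$; then set $\psi_\epsilon(z):=\epsilon z$ and $g_\epsilon:=\psi_\epsilon\circ g_0:z\mapsto\epsilon g_0(z)$, a degree $d$ rational map with the same critical points as $g_0$ (post-composition by an automorphism preserves ramification). Verify hyperbolicity of $g_\epsilon$ for small $\epsilon>0$ in three steps. First, the implicit function theorem applied to $F(z,\epsilon):=\epsilon g_0(z)-z$ at $(z,\epsilon)=(0,0)$ yields a fixed point $p_\epsilon\to 0$ of $g_\epsilon$ with multiplier $\epsilon g_0'(p_\epsilon)\to 0$, hence attracting. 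Second, pick $r>0$ so that the closed disk $\bar D_r\subset L_0$ avoids the poles of $g_0$ and set $M:=\max_{\bar D_r}|g_0|$; then $g_\epsilon(\bar D_r)\subset\bar D_{\epsilon M}\subset\bar D_r$ for $\epsilon<r/M$, so $\bar D_r$ lies in the basin of $p_\epsilon$. Third, because each critical value $g_0(c)$ is finite by the coordinate choice, $g_\epsilon(c)=\epsilon g_0(c)\in\bar D_r$ for all sufficiently small $\epsilon$, so every critical orbit converges to $p_\epsilon$. Set $T_0:=\psi_\epsilon\circ\pi_\p$ for any such $\epsilon$.

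The only delicate step is the coordinate choice, which must avoid only finitely many points (the $2d-2$ critical values of $g_0$ together with its $d$ poles); after that, the single degeneration $\psi_\epsilon(z)=\epsilon z$ simultaneously creates an attracting fixed point near $0$ and crushes all critical values into its immediate basin, so hyperbolicity comes essentially for free.
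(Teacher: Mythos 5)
Your proof is essentially the paper's argument: factor the candidate map as $T_0=\psi\circ\pi_{\p}$ with $\pi_\p$ the central projection and $\psi\in\aut(L_0)$, check that $\pi_\p\circ f|_{L_0}$ has degree $d$ (the paper does this via $f_*L_0 \sim dL_0$ and a degree count on the proper transform, you via B\'ezout; both are fine), then take $\psi$ to be a small scaling $z\mapsto \epsilon z$ so that a small disc about the origin becomes a trapping region containing an attracting fixed point together with every critical value, forcing hyperbolicity. Your extra observation that \emph{every} surjective linear $T_0:\P^2\setminus\{\p\}\to L_0$ factors through $\pi_\p$ is correct but unnecessary, since only one such $T_0$ needs to be produced. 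The only slip is in your normalization of the affine coordinate: you require that $\infty$ not be a pole of $g_0$, but what your argument actually uses (for the IFT at $(z,\epsilon)=(0,0)$ and for the disc $\bar D_r$ about $0$ to avoid the polar set) is that $0$ not be a pole of $g_0$, i.e.\ that $g_0(0)$ be finite. The paper imposes exactly this condition (``$\infty$ is not equal to $\tilde f_0(0)$'') alongside finiteness of the critical values; substituting it for your stated condition is a one-line fix and leaves the rest of your argument intact.
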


\begin{proof}

Since we assume $L_0$ is disjoint from $\I$, the total
transform of $L_0$ under $f$ coincides with the proper transform $f(L_0)$.
Hence, the push forward divisor $f_*L_0$ has irreducible support equal to $f(L_0)$.
Because we have equivalence of divisors $f_*L_0 \sim dL_0$, it follows that
$f:L_0\to f(L_0)$ is $m$-to-$1$ where $m\deg f(L_0) = d$.  Now choose $\p$
outside $L_0\cup f(L_0)$.  Let $\Pi:\P^2\setminus\{\p\}\to~L_0$ denote the
central projection sending each line through $\p$ to its intersection with $L$.
The restriction $\Pi:f(L_0)\to L_0$ is $\deg f(L_0)$-to-1.  Hence the
restriction $\tilde f_0 := (\Pi\circ f)|_{L_0}$ is a degree $d$ rational map of
$\P^1$.  We choose the identification $L_0\cong \P^1$ so that $\infty$ is not
equal to $\tilde f_0(0)$ or to any critical value of $\tilde f_0$.  Let $f_0(z)
= \alpha \tilde f_0(z)$ for some $\alpha\in \C^*$.  Taking $|\alpha|$ small
enough, we obtain that all critical values of $f_0$ lie in a small disk
$\Delta\subset\C$ about $z=0$ and that $f_0(\overline{\Delta}) \subset \Delta$.
It follows that $\Delta$ is in the Fatou set of $f_0$ and therefore $f_0$ is
hyperbolic.  Let $A:L_0\to L_0$ denote the contraction $z\mapsto \alpha z$.
Then the (linear) map $T_0 := A\circ \Pi:\P^2\setminus\{\p\}\to L_0$ satisfies
the conclusion of the proposition.
\end{proof}

With $T_0$ as in the Proposition \ref{prop:hypmap}, we let $f_0$ denote the restriction $(T_0\circ f)|_{L_0}$
and $\J$ denote the Julia set of $f_0$.  Recall (see e.g. \cite{Jon2}) that a
\emph{history} of a point $z\in\J$ is a sequence $(z_{-n})_{n\geq 0}$
terminating with $z_0 = z$ and satisfying $f(z_{-(n+1)})~=~z_{-n}$ for all
$n\in\N$.  We let $\hat \J$ denote the set of all histories of points in $\J$
and endow $\hat \J$ with the product topology.  The \emph{natural extension} of
$f_0:\J\to\J$ is the homeomorphism $\hat f:\hat\J~\to~\hat\J$ given by $\hat
f(\hat z) = (f_0(z_{-n}))_{n\geq 0}$.  The canonical projection $\pi:~\hat \J~\to~\J$, given by $\pi(\hat z) = z_0$ is a semiconjugacy of $\hat f$ onto $f_0$
that induces a one-to-one entropy-preserving correspondence $\pi_*$ between
$\hat f$ invariant probability measures on $\hat \J$ and $f_0$ invariant
probability measures on $\J$.  The natural extension has the universal property
that any other semiconjugacy of an invertible system onto $f_0:\J\to\J$ factors
through $\pi$.  (See \cite[Ch. 1]{PU} for more details on the natural extension.)

\begin{theorem}
\label{thm:nuexists}
Suppose that conditions (A),(B), and (D)  hold and that \\ $T_0:\P^2~\setminus~\{\p\}~\to~L_0$ is chosen so that the induced one dimensional map $f_0 = T_0\circ f:\P^1\to \P^1$ is hyperbolic with Julia set $\J$.  Then for any $\epsilon>0$ small enough, there exists $\delta>0$ such that $T\in\T(T_0,\delta)$ implies that there is a unique $f_T$-invariant measure $\nu$ on $B_\epsilon(L_0)$ with entropy $\log d$. Moreover
\begin{itemize}
 \item[(a)] $f_T$ is uniformly hyperbolic of saddle type on $\supp\nu$.
 \item[(b)] The canonical projection $\pi:\hat \J\to \J$ factors $\pi = \pi_T\circ \hat\pi$ into semiconjugacies $\hat\pi_T:\hat\J \to \supp\nu$ of $\hat f$ onto $f_T$, and $\pi_T:\supp\nu\to \J$ of $f_T$ onto~$f_0$.
 \item[(c)] $\supp\nu$ is not contained in an algebraic curve.
 \item[(d)] Any point $p\in B_\epsilon(L_0)$ has forward orbit $(f_T^n(p))_{n\geq 0}$ asymptotic to either $\supp\nu$ or to one of finitely many attracting periodic cycles.
\end{itemize}
\end{theorem}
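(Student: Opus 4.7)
The plan is to treat $f_T$ on the trapping neighborhood $B_\epsilon(L_0)$ as a perturbation of the hyperbolic one-dimensional map $f_0$ on $L_0$, with strongly contracting transverse dynamics. Classical persistence of uniform hyperbolicity then yields a compact hyperbolic saddle set $K$ lifting $\J$, and the unique measure of maximal entropy $\mu_0$ of $f_0|_\J$ lifts through the natural extension to a unique $f_T$-invariant measure $\nu$ with $\supp\nu = K$.

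Shrinking $\epsilon$ and then $\delta$ so that, in addition to the conclusions of Proposition~\ref{prop:twotraps}, one has $f_T(\overline{B_\epsilon(L_0)}) \subset B_{\epsilon/2}(L_0)$, a fibrewise Schwarz--Pick estimate yields a uniform transverse contraction $\|df_T|_V\| \leq \sigma < 1$ with respect to a smooth horizontal/vertical splitting of the tangent bundle along $L_0$. Let $P:B_\epsilon(L_0)\to L_0$ denote the nearest-point projection. Since $P\circ f_T$ is $C^1$-close to $f_0$ on $L_0$, structural stability of the hyperbolic rational map $f_0$ gives a perturbed hyperbolic set $\J_T\subset L_0$ for $(P\circ f_T)|_{L_0}$ and a homeomorphism $h_T:\J\to\J_T$ close to the identity. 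A standard cone-field/graph-transform construction then produces the compact $f_T$-invariant set
\begin{align*}
K \; := \; \bigcap_{n\geq 0} f_T^{-n}(P^{-1}(U)),
\end{align*}
where $U\subset L_0$ is a small neighborhood of $\J_T$ disjoint from the Fatou basins of $f_0$; on $K$, the map $f_T$ is uniformly hyperbolic of saddle type, with horizontal expansion inherited from $f_0|_\J$ and vertical contraction by $\sigma$. The projection $\pi_T := h_T^{-1}\circ P|_K$ is then a continuous surjective semiconjugacy of $f_T|_K$ onto $f_0|_\J$. Next, since the critical values of $f_T$ lie near the postcritical set of $f_0$, which is contained in the Fatou set of $f_0$ and thus disjoint from $\J_T$, the $d$ inverse branches of $f_0$ near $\J$ lift uniquely to $d$ holomorphic inverse branches of $f_T$ on $P^{-1}(U)$, each contracting horizontally (by hyperbolicity of $f_0$ on $\J$) and vertically (by $\sigma$). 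For a history $\hat z=(z_{-n})_{n\geq 0}\in\hat\J$, composing these inverse branches along $\hat z$, starting from any transverse disk at a fixed basepoint over $h_T(z_0)$, contracts to a single limit point $\hat\pi_T(\hat z)\in K$ that projects via $\pi_T$ to $z_0$. The resulting $\hat\pi_T:\hat\J\to K$ is continuous and semiconjugates $\hat f$ to $f_T|_K$, giving~(b); by the universal property of the natural extension, $\hat\pi_T$ identifies $(\hat\J,\hat f)$ with the natural extension of $(K, f_T|_K)$ and so induces an entropy-preserving bijection between invariant probability measures. Taking $\nu := (\hat\pi_T)_*\hat\mu_0$, where $\hat\mu_0$ lifts the Freire--Lopes--Ma\~n\'e/Ljubich measure of maximal entropy $\mu_0$ of $f_0|_\J$, yields an $f_T$-invariant measure with $\supp\nu = K$ and entropy $\log d$, saturating the Gromov bound~\eqref{eqn:gromovbd} and completing~(a).

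Property (c) holds because $\pi_T(\supp\nu) = \J$ is perfect and uncountable, whereas every algebraic curve meets a generic $P$-fibre in only finitely many points. For (d), vertical contraction forces any forward orbit in $B_\epsilon(L_0)$ whose $L_0$-projection eventually enters a Fatou basin of $f_0$ to be attracted to the corresponding perturbed periodic cycle of $f_T$; every other orbit has $P$-projection accumulating on $\J_T$ and hence itself accumulates on $K = \supp\nu$. For uniqueness, any ergodic $f_T$-invariant measure $\nu'$ of entropy $\log d$ in $B_\epsilon(L_0)$ cannot be concentrated on the finitely many attracting periodic orbits (which carry only zero-entropy measures), so by (d) its support lies in $K$; the natural-extension identification from above then forces $\nu'=\nu$. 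The main technical obstacle I anticipate is arranging the cone field and the uniform hyperbolicity estimates in $\P^2$ carefully enough to control the cross-terms introduced by $T-T_0$ and by the nonlinearity of $P$; once $K$ is in place with uniform constants, the symbolic-dynamics and measure-theoretic portions of the argument are essentially formal.
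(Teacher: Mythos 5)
Your overall strategy — view $f_T|_{B_\epsilon(L_0)}$ as a normally contracting perturbation of the hyperbolic map $f_0$, build a saddle set fibering over $\J$, and lift the measure of maximal entropy through the natural extension — is the same in spirit as the paper's. The paper implements the fibration via pseudo-orbit shadowing (each $p$ with orbit confined to a neighborhood of $\J$ produces a $\delta$-pseudo-orbit for $f_0$, shadowed by a unique $z\in\J$) together with a graph-transform construction of the vertical disks $\pi_T^{-1}(z)$, then sets $\Omega_T:=\bigcap f_T^n(\L_T)$; your route via the projection $P$, structural stability of $f_0$, and lifted inverse branches is a reasonable alternative, though you should be careful that $\supp\nu$ is the \emph{attractor} $\bigcap_n f_T^n(K)$ rather than $K=\bigcap_n f_T^{-n}(P^{-1}(U))$ itself: $K$ is a union of whole stable disks, only the forward-limit set of which is the support of $\nu$. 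You should also verify (rather than assert) that the critical set of $f_T$ is disjoint from the region where you lift inverse branches; the paper sidesteps this by never needing local invertibility of $f_T$.

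The genuine gap is in your argument for (c). You write that $\supp\nu$ cannot lie in an algebraic curve because $\pi_T(\supp\nu)=\J$ is uncountable ``whereas every algebraic curve meets a generic $P$-fibre in only finitely many points.'' This does not give a contradiction: an algebraic curve $C$ that is not a union of $P$-fibres typically projects \emph{onto} $L_0$, so $P(C\cap B_\epsilon(L_0))$ can easily contain all of $\J$ while still meeting each fibre finitely. Nothing you have established forces $\supp\nu$ to meet a single $P$-fibre infinitely often, which is what would be needed for the finiteness-of-fibre-intersections observation to bite. The paper's argument is of a different nature: by Lemma~\ref{lem:curvegrows}, no irreducible component of such a $C$ can be contained in $B_\epsilon(L_0)$, so (after shrinking $\epsilon$) each component of $C\cap B_\epsilon(L_0)$ has hyperbolic normalization; the restriction of $f_T$ to $C\cap B_\epsilon(L_0)$ is then distance non-increasing in the Poincar\'e metric and hence has zero topological entropy, contradicting that $\nu$ has entropy $\log d>0$ and is carried by $C\cap B_\epsilon(L_0)$. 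You would need to replace your sentence with an argument of this type (or some other mechanism actually producing a contradiction) for (c) to be proved.
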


This theorem is closely related to papers on attractors  and attracting sets for
endomorphisms of $\P^2$ by Forn{\ae}ss-Weickert \cite{FW_ATTRACTORS},
Jonsson-Weickert \cite{JW_ATTRACTORS}, Forn{\ae}ss-Sibony \cite{FS_EXAMPLES},
Dinh \cite{DINH_ATTRACTORS} and Daurat \cite{DAURAT}.  It is also closely related to 
work on H\'enon and H\'enon-Like maps by Hubbard and Oberste-Vorth \cite{HOV2} and Dujardin \cite{DUJARDIN_HENON_LIKE}.  Regardless, we present a more or less self-contained proof here.

Recall that a sequence $(z_n)_{n\geq 0}\subset L_0$ is a $\delta$ pseudo-orbit for $f_0$ if $\dist(f(z_n),z_{n+1})~<~\delta$ for all $n\geq 0$.  

\begin{proof} 
Let $\epsilon>0$ be small enough that Proposition \ref{prop:twotraps} applies.  Because $f_0$ is hyperbolic, we may shrink $\epsilon$ still further if necessary and choose an open cover $\{\U^-,\U^+\}$ of $L_0$ and $\delta>0$ with the following properties: 
\begin{itemize}
\item all conclusions of Proposition \ref{prop:twotraps} apply;
\item $\J \subset \U^- \subset \overline{\U^-}\subset f_0(\U^-)$; 
\item $f_0(\overline{\U^+}) \subset \U^+\subset L_0\setminus \J$;
\item $f_0$ is uniformly expanding on $\U^-$ and uniformly contracting on $\U^+$;
\item for any $z\in L_0$, we have $f_0^n(z)\in \U^-$ for all $n\in \N$ if and only if $z\in\J$;
\item any $\delta$ pseudo-orbit in $\U^-$ is $\epsilon$-shadowed by a unique orbit in $\J$; 
\item any $\delta$ pseudo-orbit $(z_n)_{n\geq 0}$ with $z_0\in\U^+$ is completely contained in $\U^+$ and eventually contained in an $\epsilon$ neighborhood of some attracting cycle.
\end{itemize}
Strictly speaking, one must replace the Fubini-Study metric on $\U^\pm$ with some other metric to get the uniform expansion/contraction in the fourth item.  Alternatively, one can replace $f_0$ with a high enough iterate.  These things do not affect our arguments. 

Let $\Pi~:~\P^2\setminus\{p_0\}~\to~L_0$ be the central projection.  We extend $\U^-,\U^+$ to open subsets of $\P^2$, setting $\U^\pm_\epsilon := \Pi^{-1}(\U^\pm)\cap B_\epsilon(L_0)$. Fixing $T\in \T(T_0,\delta)$, we let
$$
\L_T := \{p\in B_\epsilon(L_0):f_T^n(p) \in \U^-_\epsilon \text{ for all } n\geq 0\}. 
$$
For any point $p\in \L_T$, the sequence  $(\Pi\circ f_T^n(p))_{n\geq
0}$ is a $\delta$ pseudo-orbit for $f_0$ that is completely contained in
$\U^-$, so there is a unique point $\pi_T(p) \in \J$ whose $f_0$-orbit $\delta$
shadows that of $p$.  Clearly $\pi_T\circ f_T = f_0\circ \pi_T$.  The fact that
$f_0$ is uniformly expanding on $\U^-$ guarantees that the semiconjugacy
$\pi_T:\L_T\to\J$ is continuous.

Similarly, for any point $p\notin\L_T$, the sequence  $(\Pi\circ f_T^n(p))_{n\geq 0}$ is a $\delta$ pseudo-orbit for $f_0$ eventually contained $\U^+$ and therefore eventually contained in an $\epsilon$ neighborhood of some attracting periodic cycle $(z_n)_{n=0}^{N-1}$.  Since $f_0$ is contracting on $\U^+$, $f_T$ is contracting on $\U^+_\epsilon$.  It follows that there is a unique attracting periodic cycle $(p_n)_{n=0}^{N-1}$ for $f_T$ with $d(p_n,z_n) < \epsilon$ for all $0\leq n\leq N-1$ and that the orbit of $p$ is asymptotic to this cycle.  That is, every point $p\in B_\epsilon(L_0)\setminus \L_T$ has a forward orbit asymptotic to one of finitely many attracting cycles.

\begin{lemma}
\label{lem:lam}  For small enough $\delta>0$ and $T\in\T(\pi,\delta)$, the map $\pi_T:\L_T\to \J$ is a fibration of $\L_T$ by complex disks that are properly embedded in $B_\epsilon(L_0)$, and the maps $f_T:\pi_T^{-1}(z) \to \pi_T^{-1}(f_0(z))$ are uniformly contracting.  In particular $f_T$ is uniformly hyperbolic of saddle type on $\L_T$.
\end{lemma}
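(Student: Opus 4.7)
The plan is to run a graph transform / cone field argument, viewing the tube $B_\epsilon(L_0)$ as a topological disk bundle over $L_0$ via the central projection $\Pi:\P^2\setminus\{\p\}\to L_0$, and exploiting the fact that $T_0 = A\circ\Pi$ collapses every fiber of $\Pi$ to a point with derivative of size at most $|\alpha|$.

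First I would set up product-like coordinates $(z,w)$ on $B_\epsilon(L_0)$, with $z\in L_0$ the ``horizontal'' variable and $w$ a holomorphic parameter on $\Pi^{-1}(z)\cap B_\epsilon(L_0)$, together with the corresponding horizontal and vertical cone fields in $T\P^2$. Because $T_0$ kills the vertical direction up to the factor $|\alpha|$, the unperturbed map $T_0\circ f$ sends every vertical tangent vector in $B_\epsilon(L_0)$ to a vector of norm $\lesssim |\alpha|$. Choosing $|\alpha|$ small in Proposition \ref{prop:hypmap}, then shrinking $\epsilon$ and $\delta$, I expect that for every $T\in\T(T_0,\delta)$ the differential $Df_T$ strictly preserves the vertical cone field with a uniform contraction factor $\kappa<1$. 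On the other hand, on $\U^-_\epsilon$ the horizontal part of $f_T$ is a $C^1$-small perturbation of the uniformly expanding map $f_0:\U^-\to L_0$, so $Df_T$ also preserves a complementary horizontal cone field and expands vectors in it by at least some $\lambda>1$.

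Next I would construct the fibers $\pi_T^{-1}(z)$ by graph transform. For each $z\in\J$ and $n\geq 0$, let $D_n(z)\subset B_\epsilon(L_0)$ be the set of $p$ whose forward orbit $(\Pi\circ f_T^k(p))_{k=0}^n$ lies in an $\epsilon$-tube about the $f_0$-orbit of $z$. Using the cone estimates above, I would show inductively that each $D_n(z)$ is a closed holomorphic disk properly embedded in $B_\epsilon(L_0)$, tangent to the vertical cone, obtained from the vertical disk $\Pi^{-1}(z)\cap B_\epsilon(L_0)$ by $n$ applications of the backward graph transform associated with $f_T$. The combined vertical contraction by $\kappa$ and horizontal expansion by $\lambda$ make this graph transform a uniform $C^0$-contraction on the space of nearly-vertical holomorphic disks lying over a neighborhood of $z$, so $\bigcap_n D_n(z)$ is a single properly embedded holomorphic disk $D(z)$. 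The shadowing characterization of $\pi_T$ then gives $D(z)=\pi_T^{-1}(z)$, and continuous dependence of the graph transform on the base point shows that these disks fibrate $\L_T$. Uniform contraction of $f_T:\pi_T^{-1}(z)\to\pi_T^{-1}(f_0(z))$ is immediate from tangency to the invariant contracting cone field, and combining this with the horizontal expansion yields a $Df_T$-invariant splitting of $T\P^2|_{\L_T}$ with one expanding and one contracting direction, i.e.\ uniform hyperbolicity of saddle type.

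The main obstacle I anticipate is the quantitative vertical contraction estimate: one must show that $\|Df_T\cdot v\|\leq \kappa\|v\|$ for every vertical-cone vector $v$ at every point of $B_\epsilon(L_0)$, using only the $C^0$-closeness of $T$ to $T_0$ off a small ball around $\p$. Two ingredients make this work: Cauchy estimates convert the $C^0$ bound on $T-T_0$ away from $\p$ into a $C^1$ bound on compact subsets of $B_\epsilon(L_0)$ (provided $\epsilon$ is chosen so that $\overline{B_\epsilon(L_0)}$ is disjoint from $\p$), and the choice of $|\alpha|$ small in Proposition \ref{prop:hypmap} already makes $DT_0$ send vertical vectors to vectors of norm $O(|\alpha|)$, so the cone field inclusions are robust under perturbation. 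Everything else is then a routine hyperbolic dynamics bookkeeping.
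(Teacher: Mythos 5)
Your proposal is correct and follows essentially the same Hadamard--Perron / graph-transform strategy as the paper's proof, which likewise builds each fiber $\pi_T^{-1}(z)$ as the uniform limit of $n$-fold backward pullbacks of vertical disks lying over $f_0^n(z)$, with the contraction estimate supplied by the Schwarz Lemma. One small correction to your setup: $T_0 = A\circ\Pi$ annihilates the vertical direction \emph{exactly} (since $\Pi$ already collapses each fiber of $\Pi$ to a point, regardless of $A$), so the vertical contraction of $f_T$ comes purely from $C^0$-closeness of $T$ to the degenerate map $T_0$; the parameter $\alpha$ in Proposition~\ref{prop:hypmap} is there only to drive the critical values of $f_0$ into a small forward-invariant disk and thus make $f_0$ hyperbolic on $L_0$, not to bound the vertical derivative. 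Also, your initial characterization of $D_n(z)$ as ``the set of $p$ whose orbit stays in the $\epsilon$-tube for $n$ steps'' describes a two-dimensional set, not a disk; your subsequent description via $n$ backward graph transforms applied to $\Pi^{-1}(f_0^n(z))\cap B_\epsilon(L_0)$ is the one you actually want, and is what the paper uses.
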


The proof is a small variation on that of the Hadamard-Perron Theorem (see e.g. \cite[Thm. 6.2.8]{KH}), helped along a bit by the holomorphic context.  We sketch it for the sake of completeness.

\begin{proof}
Replacing $f_T$ and $f_0$ with iterates for the moment, we may assume that
$f_0$ expands the Fubini-Study metric on $\U^-$ by a uniform factor of $4$.
Let $\Pi:\P^2\setminus\{\p\}\to L_0$ be the central projection.  Given $z\in \J$,
let $D_n, D_n' \subset L$ be the disks of radius $\delta$, $\delta/2$ about
$f_0^n(z)$.  Thus $f_0$ maps $D_{n-1}'$ biholomorphically onto a
neighborhood of $\overline{D_n}$.

Setting $P_n = \Pi^{-1}(D_n)\cap B_\epsilon(L_0)$, we call a disk $\Delta$ in $P_n$ \emph{vertical} if $\Delta$ meets $L_0$ transversely in a single point and $\partial\Delta\subset \partial B_\epsilon(L_0)$.  Provided $\delta > 0$ is small enough, $f_T^{-1}(\Delta)$ meets $P_{n-1}$ in a unique vertical disk $f_T^\sharp(\Delta)$.  If, e.g. by linear projection from a point in $L_0$ well outside $\J$, we regard all vertical disks in $P_n$ as graphs over the `central disk' $\Pi^{-1}(f_0^n(z))$, then $f_T^\sharp$ uniformly contracts distances between graphs.  It follows that if we choose vertical disks $\Delta_n\subset P_n$ for each $n\in\N$, then $(f_T^n)^\sharp(\Delta_n)$ converges uniformly to a vertical disk $\Delta_z \subset P_0$.  Necessarily $f^n(\Delta_z)\subset P_n$ for all $n\in\N$ so that $\pi_T(\Delta_z) = z$.  Since the $P_n$ are foliated by vertical disks, it follows that in fact $\Delta_z = \pi_T^{-1}(z)$, i.e. any point $p\in P_0\setminus \Delta_z$ satisfies $f_T^n(p)\notin P_n$ for some $n$ large enough.  

Since $f_{T_0}$ contracts all vertical disks to $L_0$, it follows (for small enough $\delta$) that $f_T(\Delta_z) \subset B_{\epsilon/2}(L_0)\cap \Delta_{f(z)}$.  So by the Schwarz Lemma $f_T:\Delta_z\to \Delta_{f(z)}$ is uniformly contracting.  Since $f_0$ is expanding on $\J$, $f_T$ must uniformly expand the distance between distinct `stable' disks $\Delta_z,\Delta_{z'}$.  Hence $f_T$ is hyperbolic of saddle type on $\L_T$.
\end{proof}

Let $\Omega_T := \bigcap f_T^n(\L_T)$.  Since $\L_T$ is closed in $B_\epsilon(L_0)$ and $f(\overline{B_\epsilon(L_0)}) \subset B_\epsilon(L_0)$, it follows that $\Omega_T$ is compact and non-empty and satisfies $f_T(\Omega_T) = \Omega_T$.  The previous lemma and $f_0^{-1}(\J) = \J$ guarantee that $\pi_T:\Omega_T \to \J$ is surjective.  Since $f_T$ is uniformly contracting along fibers of $\pi_T$, we have for each history $\hat z\in\hat\J$ that the intersection $\bigcap_{n\geq 0} \overline{f_T^n(\pi_T^{-1}(z_{-n}))}$ is a single point $\hat\pi_T(\hat z)$.  More or less by construction, the resulting map $\hat\pi_T:\hat\J \to \Omega_T$ semiconjugates $\hat f$ to $f_T$ and satisfies $\pi = \pi_T\circ\hat\pi_T(\hat z)$.  It is continuous because $f_T$ is continuous and surjective because $f_T(\Omega_T) = \Omega_T$.    

Thus, by the universal property of the natural extension we have that $\hat f:\hat J\to\hat J$ is also the natural extension of $f_T:\Omega_T\to\Omega_T$.  The unique invariant probability measure $\check \nu$ of maximal entropy for $f_0:\J \to \J$ lifts to a unique measure of maximal entropy $\hat \nu$ for $\hat f:\hat J\to\hat J$.  Pushing forward to $\nu := \hat\pi_{T*}\hat \nu$ gives the unique measure of maximal entropy $\log d$ for $f_T:\Omega_T\to\Omega_T$.

To complete the proof of Theorem \ref{thm:nuexists} it remains to show that $\supp\nu$ is not contained in an algebraic curve.  Suppose, in order to reach a contradiction, that $\supp\nu$ is contained in an algebraic curve $C$.  Since $\nu$ is ergodic with infinite support, we can suppose each irreducible component of $C$ intersects $\supp \nu$ in an infinite set with positive measure, and that these components are permuted cyclically by $f_T$. 

It follows from Lemma \ref{lem:curvegrows} that no irreducible component of $C$ is contained in $B_\epsilon(L_0)$.
Slightly shrinking $\epsilon$ if necessary, we may assume that $B_\epsilon(L_0)$ omits at least three points of each such component.  So $C\cap B_\epsilon(L_0)$ is forward invariant by $f_T$, and its normalization $S$ is a Riemann surface whose connected components are all hyperbolic.  Hence the restriction of $f_T$ to $C\cap B_\epsilon(L_0)$ lifts to a
map on $S$ which is distance non-increasing in the hyperbolic metric.  In particular the restriction of
$f_T$ to $C\cap B_\epsilon(L_0)$ has topological entropy zero.  This
contradicts the fact that $\nu$ is an $f_T$-invariant measure of positive
entropy supported on  $C\cap B_\epsilon(L_0)$.  
\end{proof}

\subsection{The repelling measure}

Recall \cite{KITCHENS} that for any positive integer $k$, the \emph{one-sided $k$-shift} is the set $\Sigma_k$ of all sequences $(i_n)_{n\geq 0}$ that take values $i_n\in\{0,\dots,k~-~1\}$, together with the map $\sigma:\Sigma_k\to\Sigma_k$, $\sigma:(i_n) \mapsto (i_{n+1})$.  The one-sided $k$-shift has topological entropy $\log k$ and admits a unique invariant measure that achieves this entropy.

\begin{theorem}
\label{thm:muexists}
Suppose that conditions (A),(B) and (C) hold.  Then for $\epsilon>0$ small enough there exists $\delta>0$ such that for any $T\in \T(T_0,\delta)$
\begin{itemize}
 \item $\A := \bigcap_{n\geq 0} f_T^{-n}(B_\epsilon(f^{-1}(p_0)))$ is a Cantor set totally invariant by $f_T$;
 \item $f_T$ is uniformly expanding on $\A$;
 \item $f_T|_\A$ is topologically conjugate to the one-sided $\lambda_2$-shift;
 \item in particular, there is a unique $f_T$-invariant measure $\mu$ with $h_\mu(f_T) =
\log\lambda_2$ having 
$\supp\mu \subset B_\epsilon(f^{-1}(p_0))$.
\end{itemize}
\end{theorem}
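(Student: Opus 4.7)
The plan is to construct $\A$ as a standard invariant Cantor set for an expanding repeller with $\lambda_2^n$ disjoint blocks at each level, and then read off the shift conjugacy. The key structural input is that $T^{-1}$ collapses toward $\p$: from the definition of $\T(T_0,\delta)$ one has $T(\P^2\setminus B_\delta(\p))\subset B_\delta(L_0)$, equivalently $T^{-1}(\P^2\setminus B_\delta(L_0))\subset B_\delta(\p)$, and conditions (A), (B) make $f^{-1}(\p)$ disjoint from $L_0$, so for $\delta\ll\epsilon$ we get $T^{-1}(B_\epsilon(f^{-1}(\p)))\subset B_\delta(\p)$. Condition (C) then gives $B_\epsilon(f^{-1}(\p))=\bigsqcup_{i=1}^{\lambda_2} B_i$ as a disjoint union of topological balls on each of which $f$ is a biholomorphism with $f(B_i)\supset B_\delta(\p)$, so the preimage decomposes as
\begin{equation*}
f_T^{-1}(B_\epsilon(f^{-1}(\p))) = \bigsqcup_{i,j=1}^{\lambda_2} U_{ij}, \qquad U_{ij} := (f|_{B_i})^{-1}(T^{-1}(B_j)),
\end{equation*}
with each $U_{ij}$ relatively compact in $B_i$ and $f_T:U_{ij}\to B_j$ a biholomorphism. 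This already gives $f_T^{-1}(B_\epsilon(f^{-1}(\p)))\subset B_\epsilon(f^{-1}(\p))$, hence $\A$ is closed, nonempty, and totally invariant by $f_T$.

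Next I would promote this combinatorial structure to hyperbolicity. Because $T^{-1}$ sends the $O(1)$-size set $B_\epsilon(f^{-1}(\p))$ into the $O(\delta)$-size set $B_\delta(\p)$, a direct matrix calculation (or a Cauchy-estimate argument in an affine chart containing everything) yields $\|DT^{-1}\|=O(\delta)$ uniformly on $B_\epsilon(f^{-1}(\p))$. Since $f$ is a local biholomorphism with derivative bounded below near each $q_i$, each inverse branch $(f_T|_{U_{ij}})^{-1}$ is a Fubini--Study contraction by a factor $\leq c\delta$ with $c$ independent of $T\in\T(T_0,\delta)$, and for $\delta$ small this ratio is strictly below~$1$. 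Iterating, the cylinder sets $U_{i_0 i_1\cdots i_n}:=\{x\in B_{i_0}:f_T^k(x)\in B_{i_k}\text{ for }1\leq k\leq n\}$ are nonempty, pairwise disjoint for fixed $n$, relatively compact in $B_{i_0}$, and satisfy $\mathrm{diam}(U_{i_0\cdots i_n})\leq (c\delta)^n\cdot\mathrm{diam}(B_{i_0})\to 0$. The address map $\phi:\Sigma_{\lambda_2}\to\A$ sending $(i_n)_{n\geq 0}$ to the unique point of $\bigcap_n U_{i_0\cdots i_n}$ is then a homeomorphism intertwining $\sigma$ with $f_T|_\A$, so $\A$ is a Cantor set and $f_T|_\A$ is uniformly expanding.

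Finally, the one-sided $\lambda_2$-shift admits a unique invariant probability measure of maximal entropy $\log\lambda_2$ (the uniform Bernoulli measure), which transports through $\phi$ to the unique $f_T$-invariant measure $\mu$ on $\A$ with $h_\mu(f_T)=\log\lambda_2$. Uniqueness within $B_\epsilon(f^{-1}(\p))$ follows because any $f_T$-invariant probability measure supported there must give full mass to every $f_T^{-n}(B_\epsilon(f^{-1}(\p)))$, hence to $\A$. I expect the main obstacle to be the contraction estimate in the second step: although $T^{-1}$ carries a large $O(1/\delta)$ singular value coming from the near-degeneracy of $T$, this direction aligns with the fibre through $\p$ and gets absorbed into the projective normalization once one computes in an affine chart containing $B_\epsilon(f^{-1}(\p))$, so the effective derivative on the region of interest is $O(\delta)$ rather than $O(1/\delta)$. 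Verifying this carefully is the one place where the specific projective-linear structure of $\T(T_0,\delta)$ is used in an essential way.
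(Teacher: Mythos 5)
Your proof follows the same overall architecture as the paper's: use condition (C) to write $B_\epsilon(f^{-1}(p_0))$ as a disjoint union of $\lambda_2$ balls on which $f$ is a biholomorphism onto a neighborhood of $p_0$, show each inverse branch of $f_T$ is a uniform contraction, identify $\A$ with $\Sigma_{\lambda_2}$ via nested cylinder sets, and transport the shift's unique measure of maximal entropy. The one place you diverge, and the one you flag as uncertain, is the contraction estimate. You propose to show $\norm{DT^{-1}} = O(\delta)$ on $B_\epsilon(f^{-1}(p_0))$ via a Cauchy estimate in a chart, and worry about the $O(1/\delta)$ singular direction coming from the near-degeneracy of $T$. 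In fact your Cauchy-estimate idea does go through: choose one affine chart containing a neighborhood of the finite set $f^{-1}(p_0)\cup\{p_0\}$ (on which Fubini--Study and Euclidean metrics are uniformly comparable); since $T^{-1}$ maps $B_{2\epsilon}(f^{-1}(p_0))$ holomorphically into $B_\delta(p_0)$, Cauchy's estimate bounds $\norm{DT^{-1}}$ by $O(\delta/\epsilon)$ on $B_\epsilon(f^{-1}(p_0))$, and the dangerous expanding direction of $T^{-1}$ is localized near $B_\epsilon(L_0)$, which is disjoint from the region in question. The paper sidesteps any derivative computation by observing that $B_\epsilon(f^{-1}(p_0))$ is Kobayashi complete hyperbolic and that $f_T$ maps each $B_\epsilon(p_j)$ biholomorphically onto a domain compactly containing $\overline{B_\epsilon(f^{-1}(p_0))}$; Schwarz--Pick then gives uniform expansion of the Kobayashi metric with no estimate on $T$ at all. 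Both routes are valid, but the Kobayashi phrasing is slightly cleaner and makes it transparent that the near-degeneracy of $T$ you worry about is harmless, since the expansion comes purely from the strict inclusion of domains.
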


\begin{proof}
Choose $\epsilon>0$ and then $\delta>0$ small enough that all conclusions of Proposition~\ref{prop:twotraps} apply.  Since $p_0$ is a regular value of $f$, we may further assume that $B_\epsilon(f^{-1}(p_0))$ is a disjoint union of $\lambda_2$ open balls, each centered at a preimage $p_j$ of $p_0$ and mapped biholomorphically by $f$ onto a neighborhood of $p_0$.  In particular, $B_\epsilon(f^{-1}(p_0))$ is Kobayashi complete hyperbolic.  (See \cite{KRANTZ} for a gentle introduction to the Kobayashi metric and \cite{LANG} for more details.)

Shrinking $\delta$ if necessary, we may assume that $T$ maps the complement of 
$f(B_\epsilon(p_j))$ into $B_\epsilon(L_0)$, so that $f_T(B_\epsilon(p_j))$
contains $\overline{B_\epsilon(f^{-1}(p_0))}$. It follows that $f$ uniformly
expands the Kobayashi distance on $B_\epsilon(f^{-1}(p_0))$.

Standard arguments now tell us that if we assign to each point $p\in \A$ its  `itinerary' $\iota(p) = (i_n)$, where $f_T^n(p) \in B_\epsilon(p_{i_n})$, then the resulting map $\iota:\A \to \Sigma_{\lambda_2}$ is a homeomorphism that conjugates $f_T|\A$ to the shift map $\sigma$.  Pulling back the unique measure of maximal entropy for $\sigma$ gives us the measure $\mu$ in the final conclusion of the theorem.
\end{proof}

In order to show that the repelling measure $\mu$ is not contained in an algebraic curve, we first indicate some useful refinements of Proposition \ref{prop:hypmap}.

\begin{proposition}
\label{prop:hypmapplus}
Suppose $p_0$ and $L_0$ satisfy $p_0 \not \in f(L_0)$ and  Properties (B-D) and let $L_1$ be some chosen line
through two distinct points of $f^{-1}(p_0)$.
One can choose a linear map
$T_0:\P^2\setminus\{\p\}\to L_0$ so that $T_0\circ f|_{L_0}$ is a degree $d$ hyperbolic rational self-map of $L_0$ and, additionally, there exists a disk
$\Delta\subset L_0$ satisfying
\begin{itemize}
\item $\Delta$ is disjoint from the critical set of $f$ and from $L_1 \cap L_0$,
\item $\Delta$ contains
$T_0\circ f(\overline{\Delta})$, $T_0\circ f(\E)$, and $T_0\circ f(L_1 \cap L_0)$, and
\item  each irreducible component of  $\E$ has infinite forward orbit under  $T_0 \circ f$.
\end{itemize}
\end{proposition}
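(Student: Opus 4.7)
The plan is to refine the choices in the proof of Proposition \ref{prop:hypmap} so as to satisfy the additional bullets.

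First I would identify $L_0 \cong \P^1$ by placing $0$ at a point $z_0 \in L_0$ chosen outside several finite sets: $\cdiv(f) \cap L_0$ (which is finite, since $L_0$ is implicitly assumed not to be a component of the critical divisor), the single point $L_0 \cap L_1$, and the finitely many $z \in L_0$ satisfying $\Pi(f(z)) = \Pi(f(E_i))$ for some irreducible component $E_i$ of $\E$ (each such $E_i$ contributes only finitely many such $z$, since $f(L_0)$ meets the line through $p_0$ and $f(E_i)$ in finitely many points). I would then place $\infty$ to avoid $\tilde f_0(0) = \Pi(f(z_0))$, the critical values of $\tilde f_0 := (\Pi \circ f)|_{L_0}$, the finite set $\Pi(f(\E))$, and the point $\Pi(f(L_1 \cap L_0))$. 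Each constraint is a finite exclusion, so a valid identification exists.

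Second, I would set $T_0 := A \circ \Pi$ with $A(z) = \alpha z$ and $\Delta := \{|z| < r\}$. Choose $r > 0$ small enough that $\overline{\Delta}$ is disjoint from $\cdiv(f) \cap L_0$ and from $L_0 \cap L_1$, both finite sets not containing $z_0 = 0$. For all sufficiently small $|\alpha|$, the argument from Proposition \ref{prop:hypmap} then yields that $f_0 := \alpha \tilde f_0$ is a degree $d$ hyperbolic rational self-map of $L_0$, that $f_0(\overline{\Delta}) \subset \Delta$, and that the critical values of $f_0$ lie in $\Delta$. The extra inclusions $T_0 \circ f(\E) \subset \Delta$ and $T_0 \circ f(L_1 \cap L_0) \in \Delta$ follow because $T_0 \circ f(x) = \alpha \Pi(f(x))$ is $\alpha$ times a bounded complex number in our coordinate, using that $\Pi(f(\E))$ and $\Pi(f(L_1 \cap L_0))$ avoid $\infty$.

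The main obstacle is the final bullet: each irreducible component $E_i$ must have infinite forward orbit under $T_0 \circ f$. After one step this orbit lies in $\Delta$ and evolves by $f_0$, and since $f_0$ is contracting on $\Delta$ with unique attracting fixed point $z_*(\alpha)$ near $0$, the orbit is infinite iff $T_0 \circ f(E_i) = \alpha \Pi(f(E_i))$ is not preperiodic to $z_*(\alpha)$. For each $n \geq 0$ and each $i$, the equation $f_0^n(\alpha \Pi(f(E_i))) = z_*(\alpha)$ is analytic in $\alpha$. I would establish non-triviality from the $n = 0$ case using $z_*(\alpha) = \alpha \tilde f_0(0) + O(\alpha^2)$ together with $\Pi(f(E_i)) \neq \tilde f_0(0)$ (arranged by the choice of $z_0$ in Step 1); higher iterates are handled similarly by leading-order expansion in $\alpha$. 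Each such equation then has only isolated zeros, so a generic small $|\alpha|$ simultaneously avoids all of them, completing the argument.
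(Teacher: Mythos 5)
Your treatment of the first two bullets follows the same path as the paper (refine the identification $L_0 \cong \P^1$ so that $0$ and $\infty$ avoid the relevant finite sets, take $|\alpha|$ and $\Delta$ small), so that part is fine.

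The issue is your handling of the third bullet. You reduce to showing that, for each $n$ and each $E_i$, the equation $f_0^n(\alpha\Pi(f(E_i))) = z_*(\alpha)$ has only isolated solutions in $\alpha$, and you propose to verify non-triviality ``by leading-order expansion in $\alpha$.'' That works for $n=0$, where the two sides differ at order $\alpha$. But for $n\geq 1$ both sides expand as $\alpha\,\tilde f_0(0) + O(\alpha^2)$, so the leading terms agree identically and the claimed expansion argument does not directly apply. In fact, since $z_*(\alpha)$ is an attracting fixed point whose multiplier is $O(\alpha)$, the difference $f_0^n(\alpha c)-z_*(\alpha)$ vanishes to increasingly high order in $\alpha$ as $n$ grows, so any direct expansion argument would have to track higher and higher order terms, and you have not actually shown these are non-zero. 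This is a genuine gap.

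The paper sidesteps all of this with a single observation you never invoke: $T_0\circ f$ is \emph{injective} on $\Delta$, with unique fixed point $\eta$. Since the orbit of $T_0\circ f(E)$ lies in $\Delta$ from the start, injectivity means the orbit is preperiodic iff $T_0\circ f(E) = \eta$ (any later hit on $\eta$ would, by pulling back through the injective map, force the initial point to equal $\eta$). This collapses your countably many conditions in $n$ to a single non-equality per component of $\E$, which is then arranged by an arbitrarily small perturbation of $T_0$. If you add this injectivity observation, your reduction to the $n=0$ case becomes automatic and the rest of your argument goes through; without it, the justification for $n\geq 1$ is missing.
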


\begin{proof}
In the proof of Proposition \ref{prop:hypmap} we can (additionally)
identify $0$ and $\infty$ with regular points of $f$ different from $L_1\cap L_0$. 
All conclusions of Proposition \ref{prop:hypmap}
and the first two conclusions above follow by taking both $|\alpha|$ and $\Delta$
small enough. 

Let $T_0$ be the map obtained in the previous paragraph.
Concerning the third conclusion, note that $T_0 \circ f$ is injective on $\Delta$ and has a unique fixed point $\eta$,
so it suffices to arrange that no irreducible component $E$ of $\E$ has image $T_0 \circ f(E)~=~\eta$.
This can be done without affecting any of the previous properties by an arbitrarily small perturbation of $T_0$.
\end{proof}

\begin{theorem}
\label{THM_MU_NOT_1D}
Suppose in Theorem \ref{thm:muexists} that the rational map $f$ has equal degrees $d = \lambda_2$.  If $\epsilon>0$ and $\delta>0$ are small enough and $T_0$ is chosen to satisfy the hypotheses of Propositions \ref{prop:hypmap} and \ref{prop:hypmapplus}, then there is no algebraic curve in $\P^2$ that contains $\supp\mu$.  
\end{theorem}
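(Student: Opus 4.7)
My plan is to adapt the argument used in the last paragraph of the proof of Theorem \ref{thm:nuexists} to the Cantor set setting. Suppose for contradiction that $\supp\mu \subset C$ for some algebraic curve $C$. Using ergodicity of $\mu$ and the fact that $\mu$ has infinite support, I may reduce (after passing to an iterate, which preserves all hypotheses) to the case where $C' := C$ is a single irreducible $f_T$-invariant algebraic curve with $\supp\mu\subset C'$. For notational simplicity I will work with $f_T$ directly below, understanding that one should iterate if necessary.

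Next I would establish that the topological degree $\nu$ of the induced rational self-map $f_T|_{C'}:C'\to C'$ equals $d$. The lower bound $\nu \ge d$ is the variational principle applied to $\mu$ (which has entropy $\log d$). The upper bound $\nu \le d$ is B\'ezout: for a generic line $L$, the curve $f_T^{-1}(L)$ has degree $d$, so $|f_T^{-1}(L)\cap C'|\le d\deg C'$, yet $\nu\cdot\deg C'$ of these intersection points are preimages in $C'$ of the $\deg C'$ points in $L\cap C'$. Applying Riemann--Hurwitz to the degree-$d$ self-map of the normalization $\widetilde{C'}$ then forces $g(\widetilde{C'})\le 1$; the genus-$1$ case is ruled out because it would force $\supp\mu=C'$ (the unique measure of maximal entropy of an isogeny is Haar), yet any algebraic curve in $\P^2$ must meet $L_0$ while $\supp\mu\subset B_\epsilon(f^{-1}(p_0))$ is disjoint from $L_0$.

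The remaining (rational) case is where the real work lies, and I plan to close it by invoking the exceptional set and Proposition~\ref{prop:hypmapplus}. Since $\lambda_2(f_T)=d<d^2$, we have $\I(f_T)\ne\emptyset$, which for rational self-maps of $\P^2$ forces $\E(f_T)\ne\emptyset$. Pick any irreducible exceptional component $E$; since $C'$ is not exceptional ($f_T(C')=C'$ is a curve), B\'ezout gives $E\cap C'\ne\emptyset$, so the collapse point $q_E := f_T(E)$ lies on $C'$, and by invariance of $C'$ the entire forward $f_T$-orbit of $q_E$ lies in $C'$. Proposition~\ref{prop:hypmapplus} places $T_0\circ f(E)$ in the trapping disk $\Delta$ and forces its $f_0$-orbit to be infinite; for $\delta$ small, this orbit is closely shadowed by the $f_T$-orbit of $q_E$, which is therefore an infinite set sitting in a neighborhood of $\Delta\subset B_\epsilon(L_0)$.

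The main obstacle is the final step: naively one wants to conclude that the closure of this orbit contains $\supp\nu$, contradicting Theorem~\ref{thm:nuexists}(c), but $\Delta$ lies in the Fatou set of $f_0$, so the orbit only accumulates on an attracting cycle $\eta_T$ of $f_T$ near the fixed point $\eta$ of $f_0|_\Delta$. To finish, I would combine the genus bound with the extra constraints from Proposition~\ref{prop:hypmapplus}: namely that $\Delta$ is disjoint from $L_1\cap L_0$ and from the critical set of $f$, and that the line $L_1$ passes through two distinct points $p_1,p_2\in f^{-1}(p_0)$. The idea is to study how $C'$ meets $L_1$: because $\mathcal{A}\subset C'$ has nontrivial trace in each $B_\epsilon(p_j)$, the algebraic curve $C'$ is forced to have branches near both $p_1$ and $p_2$, and comparing the intersection numbers $C'\cdot L_1$ and $C'\cdot f_T^{-1}(L_1)$ via B\'ezout -- together with the fact that $f_T$ folds $L_1$ doubly over its image near $T(p_0)$ -- should produce an intersection count incompatible with the bound $\deg C'\cdot d = \deg f_T^{-1}(L_1)\cdot\deg C'$ enforced by the $\nu=d$ structure. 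I expect this geometric overcount, combined with the genus-$0$ constraint, to yield the required contradiction.
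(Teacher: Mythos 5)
Your outline tracks the paper's Lemma~\ref{LEM_SMOOTH_CURVE} closely for the structural reductions (irreducibility, total invariance, topological degree $d$ on $C$, rationality, $\E\neq\emptyset$, forward orbit of the collapse points sitting in $C$), and these steps are essentially sound.  But the proposal never actually closes: your final paragraph explicitly identifies the crux — the forward orbit of $q_E$ accumulates only on an attracting cycle near $\eta$, so no contradiction with Theorem~\ref{thm:nuexists}(c) is forthcoming — and then replaces an argument with a hope (``I expect this geometric overcount \ldots to yield the required contradiction'').  That expectation is not warranted as stated: Bézout applied to $C'\cdot f_T^*L_1$ is automatically consistent with $f_T|_{C'}$ being degree $d$, so a raw comparison of $C'\cdot L_1$ with $C'\cdot f_T^{-1}(L_1)$ produces no overcount.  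Some genuinely new input is needed.

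The paper supplies that input by working \emph{perturbatively} rather than with a single fixed $T$.  It first strengthens the structural lemma to show $C$ must be smooth (a backward orbit of a singular point would have to land in $f_T(\E)$, which by Proposition~\ref{prop:hypmapplus} has an infinite, non-self-intersecting forward orbit — a contradiction), so $\deg C\in\{1,2\}$.  Then it argues by contradiction with a sequence $T_i\to T_0$ carrying curves $C_i$: passing to the limit forces $C_\infty=L_0\cup L_1$, and combining the saddle fixed point $s_i\in D_i\cap C_i$ furnished by Lemma~\ref{lem:lam} with the $d$ repelling fixed points near $f^{-1}(p_0)$ and one attracting fixed point gives at least $d+2$ fixed points of a degree-$d$ self-map of $\P^1\cong C_i$, an impossibility.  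Your sketch never establishes smoothness of $C$, never invokes the limit $T\to T_0$, and never uses the saddle fixed points supplied by Theorem~\ref{thm:nuexists}, so it is missing all three ingredients that make the paper's endgame work.

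Two smaller cautions: (i) the inference ``$\I(f_T)\neq\emptyset$ forces $\E(f_T)\neq\emptyset$'' is not valid in general for rational self-maps of $\P^2$; the paper instead derives $\E\neq\emptyset$ from $f_T^*C=C+E_C$ with $\deg E_C=(d-1)\deg C>0$, which uses the existence of the totally invariant curve.  (ii) Passing to an iterate to make $C$ irreducible is fine, but you should then verify that the hypotheses of Propositions~\ref{prop:hypmap}--\ref{prop:hypmapplus} persist for the iterate; the paper sidesteps this by using mixing of $\mu$ rather than ergodicity plus iteration.
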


We begin proving this by drastically narrowing down the possibilities for an algebraic curve that contains $\supp\mu$.

\begin{lemma}
\label{LEM_SMOOTH_CURVE}
Suppose under the hypotheses of Theorem \ref{THM_MU_NOT_1D} that $T\in\T(T_0,\delta)$ and $C\subset\P^2$ is an algebraic curve containing $\supp\mu$, where $\mu$ is the repelling measure for $f_T$.  Then $\E(f) = \E(f_T) \neq \emptyset$ and we may assume that
\begin{itemize}
 \item $C$ is smooth and rational, in particular irreducible.
 \item $C = f_T^{-1}(C)$ is totally invariant.
 \item $\I(f_T^n) \cap C = \emptyset$ for all $n\geq 1$.
 \item The forward orbit $f_T^n(\E(f))$ is contained in $C$.
\end{itemize}
\end{lemma}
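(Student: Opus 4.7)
The plan is to refine the curve $C$ in stages, using ergodicity of $\mu$, the entropy constraint $h_\mu(f_T) = \log d$, Riemann--Hurwitz on the normalization, and the structure of the pullback divisor $f_T^*C$.

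First, since $\mu$ is conjugate to Bernoulli on the one-sided $d$-shift, it is ergodic, so $f_T$ cyclically permutes the irreducible components of $C$ that meet $\supp\mu$ in positive measure.  Replacing $C$ by one such component (and, if the cycle length exceeds one, passing to a suitable iterate of $f_T$), I may assume $C$ is irreducible.  The entropy chain $\log d = h_\mu(f_T\!\mid_C) \leq h_{\text{top}}(f_T\!\mid_C) = \log(\deg f_T\!\mid_C) \leq \log \lambda_2 = \log d$ forces $\deg f_T\!\mid_C = d$; since every generic point of $\P^2$ has exactly $d = \lambda_2$ preimages, all $d$ preimages of a generic $p\in C$ must lie in $C$, giving $f_T^{-1}(C) = C$ set-theoretically.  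Normalizing $\tilde C \to C$ and lifting to a degree-$d$ self-map $\tilde f_T\colon\tilde C\to\tilde C$, Riemann--Hurwitz gives $(2g-2)(d-1) \leq 0$, so $g(\tilde C)\leq 1$.  The elliptic case $g=1$ is excluded because such a $\tilde f_T$ is necessarily unramified (an isogeny composed with a translation), whose unique measure of maximal entropy is Haar on $\tilde C$ and has full support; but $\supp\mu$ lifts to a Cantor subset of $\tilde C$, a contradiction.  Hence $C$ is rational.

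Next, the finite singular locus of $C$ is totally invariant under $f_T\!\mid_C$ and so forms a union of periodic cycles on $\tilde C = \P^1$.  Combined with the specific positioning of critical values and $\E$-images inside the small disk $\Delta\subset L_0$ from Proposition \ref{prop:hypmapplus}, one rules out any singular periodic point on $C$, confining $\deg C\leq 2$ so that $C$ is a line or smooth conic.  With $C$ smooth and totally invariant, $f_T\!\mid_C$ extends holomorphically everywhere on $C$, and an indeterminate point $p\in C\cap\I(f_T)$ would force the image curve $f_T(p)$ to lie in $C$ --- a geometric constraint incompatible with the low degree of $C$ in our setup.  Thus $C\cap\I(f_T)=\emptyset$, and induction using total invariance gives $C\cap\I(f_T^n)=\emptyset$ for every $n\geq 1$.

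Finally, write $f_T^*C = m\cdot C + \sum_j\alpha_jE_j$, where the $E_j$ are exceptional components of $f_T$ with image a point of $C$.  Since $f_T^{-1}(C)=C$ with $f_T\!\mid_C$ of topological degree $d$, the relation $m\cdot\deg(f_T\!\mid_C) = d$ forces $m = 1$, so the $E_j$ contribute a nonzero total degree of $(d-1)\deg C$.  In particular $\E(f)\neq\emptyset$ and each $E_j$ has image on $C$.  Any further exceptional curve $E'$ with $f_T(E')\notin C$ would meet $C$ by Bezout in a point $p\in C\setminus \I(f_T)$ satisfying both $f_T(p)\in C$ and $f_T(p)=f_T(E')\notin C$, a contradiction.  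Hence $f_T(\E(f))\subset C$, and total invariance propagates to $f_T^n(\E(f))\subset C$ for all $n\geq 1$.

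The main obstacle I expect is the smoothness and degree bound on $C$: while Riemann--Hurwitz with the Bernoulli-versus-Haar dichotomy cleanly disposes of the higher-genus cases, excluding nodal and cuspidal rational plane curves as totally invariant curves for $f_T$ requires a genuine and delicate use of the hyperbolicity of $f_0$ and the positioning arranged by Proposition \ref{prop:hypmapplus}; the other steps are consequences of general dynamical principles sketched above.
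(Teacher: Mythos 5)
Your overall architecture matches the paper's: mixing/ergodicity to reduce to an irreducible invariant component, the entropy chain to force $\deg(f_T|_C)=\lambda_2$ and total invariance, Riemann--Hurwitz plus the Cantor-support-vs.-Haar argument to exclude positive genus, and Bezout/degree counting for the statements about $\I$ and $\E$. The route for rationality, total invariance, and $\E$-containment is essentially the paper's, sometimes in a different but equivalent packaging.

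There is, however, a genuine gap in your smoothness argument, and it is the one place where the lemma actually needs a careful idea. You assert that \emph{``the finite singular locus of $C$ is totally invariant under $f_T|_C$ and so forms a union of periodic cycles on $\tilde C=\P^1$.''} This is false. Because $f_T^*C = C + E_C$ with $E_C$ supported on $\E$, a preimage $q$ of a singular point $p$ of $C$ is only known to be singular \emph{or} to lie in $\E$; and a forward image of a singular point that happens to lie on $\cdiv(f_T)$ can be smooth. So the singular locus is neither forward nor backward invariant, and the ``periodic cycles'' framing does not get off the ground. The paper instead argues: every backward orbit of a singular point eventually lands on $f_T(\E)\cap C$; by Proposition~\ref{prop:hypmapplus}, the point $f_T(E)$ lies in a forward-invariant neighborhood of $\Delta$ on which $f_T$ is a biholomorphism, has an infinite forward orbit, and is not fixed. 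Since a local biholomorphism preserves singularity and $C$ is invariant, this produces infinitely many distinct singular points of $C$, contradicting finiteness. Your sentence ``one rules out any singular periodic point on~$C$'' gestures at the right ingredients but does not set up the actual contradiction, and the premise it rests on (total invariance of $\mathrm{Sing}(C)$) is wrong.

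A secondary issue is ordering and precision for the $\I$-disjointness. You deduce $C\cap\I(f_T)=\emptyset$ \emph{after} smoothness, by saying an indeterminate point would ``force the image curve $f_T(p)$ to lie in $C$ --- a geometric constraint incompatible with the low degree of~$C$.'' This is not a proof: a priori $f_T(p)$ need not lie in $C$, and even if it did, ``low degree'' is not a reason. The paper's proof is both cleaner and logically prior to smoothness: from $\lambda_2\deg C\le \deg f_{T*}C=d\deg C=\lambda_2\deg C$ one gets $f_{T*}C=\lambda_2 C$, so the total transform has no extra components, hence $C\cap\I(f_T)=\emptyset$; invariance then gives $C\cap\I(f_T^n)=\emptyset$. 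This disjointness from $\I$ is used in the smoothness argument (so that the pullback of a local defining function makes sense at every preimage), so putting it after smoothness also creates a circularity you would need to untangle.
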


\begin{proof}
The measure of maximal entropy on the $\lambda_2$-shift is mixing, hence so is $\mu$.  We may therefore assume that $C$ is irreducible and invariant, i.e. $f_T(C) = C$.  Since $\mu$ has entropy $\log\lambda_2$ and totally disconnected support, it further follows that $C$ is rational and that $f_T|_C$ is (at least and therefore exactly) $\lambda_2$-to-1.

In particular, $C$ is \emph{totally} invariant, i.e. $f_{T}^{-1}(C) = C$.  Furthermore, 
\begin{align*}
\lambda_2\deg C\leq \deg f_{T*}C = d\deg C = \lambda_2\deg C
\end{align*}
 implies that $f_{T*} C = \lambda_2 C$ and therefore that $C$ contains no indeterminate points for $f_T$.  Invariance of $C$ implies that it contains no indeterminate points for $f_T^n$ for any $n\geq 1$.

Since $\supp\mu$ is disjoint from the critical set of $f_T$, we see that $f_T$ is not critical along $C$.  Hence $f_T^* C = C + E_C$ where $E_C$ is a non-trivial effective divisor supported on the exceptional set $\E$.  In particular $\E\neq \emptyset$.  Each irreducible component $E$ of $\E$ must meet $C$ at some non-indeterminate point $p$, so it follows that $f_T^n(E) = f_T^n(p) \subset C$ for all $n\geq 1$.  

It remains to prove that $C$ is smooth.  Suppose that $p$ is a singular point of $C$ and that $q\in\P^2$ is any $f_T$-preimage of $p$.  Any local defining function $\psi$ for $C$ vanishes to order at least $2$ at $p$.  Hence the local defining function $\psi\circ f_T$ for $f_T^*C$ vanishes to order at least $2$ at $q$, i.e $f_T^*C$ is singular at $q$.  Since $f_T^*C = C + E_C$, we infer that either $C$ is singular at $q$ or $q\in\E$.  In the first case, we repeat the argument with $q$ in place of $p$, etc.  Since $C$ has only finitely many singular points, we will eventually find ourselves in the second case.  In short, \emph{any} backward orbit of $p$ contains a point in $f_T(\E)\cap C$.

It follows from Proposition \ref{prop:hypmapplus} that for $T$ close enough to
$T_0$ any irreducible component of $E$ of $\E$ has $f_T(E)$
in a forward invariant neighborhood $W$ of the
disk $\Delta$.  Moreover, $f_T|_W$ is a biholomorphism onto its
image with a unique fixed point.  By the third part of Proposition \ref{prop:hypmapplus}, this fixed point is
different from $f_T(E)$.
Therefore, $p$ has an infinite forward orbit along which $C$ would be singular, giving a contradiction.
We conclude that $C$ is smooth.
\end{proof}

\begin{proof}[Proof of Theorem \ref{THM_MU_NOT_1D}]
If the theorem fails, then there is a sequence $T_i \in \aut(\P^2)$, with
$T_i \rightarrow T_0$, for which the repelling invariant measure $\mu_i$
associated to $f_{T_i}$  is contained in an algebraic curve $C_i$.  By
Lemma \ref{LEM_SMOOTH_CURVE}, each $C_i$ is smooth and rational.  In particular each has degree one or two.
Refining, we may assume that $\deg C_i$ is independent of $i$ and that $C_i \to C_\infty$, where $C_\infty$ is a (possibly reducible) divisor of the same degree, and the convergence can be understood to take place with respect to coefficients of the homogeneous defining polynomials for $C_i$.  

We claim that $L_0$ is an irreducible component of $C_\infty$.  Indeed, $C_i$ contains the forward orbit of the exceptional set $\E$ by $f_{T_i}$.  As $i\to\infty$ this forward orbit converges to that of $\E\cap L_0$ by $T_0\circ f$.  
By Proposition \ref{prop:hypmapplus}, the latter is an infinite subset of $L_0$, so that $L_0$ must be contained in $C_\infty$.  So the claim follows from the fact two algebraic curves with infinitely many points in common must share an irreducible component.

Since $\supp\mu\subset C_i$ converges to $f^{-1}(p_0)$ as $i\to\infty$, it further follows that $f^{-1}(p_0)\subset C_\infty$.  By hypothesis $f^{-1}(p_0)$ contains at least two points, all distinct from $L_0$.
Hence the only possibility here is that $C_\infty = L_0\cup L_1$, where $L_1\neq L_0$ contains the entire preimage $f^{-1}(p_0)$ of $p_0$.  Proposition \ref{prop:hypmapplus} implies that $L_1\cap L_0$ is a point in the Fatou set of the one dimensional map $f_0 := T_0\circ f:L_0 \to L_0$.  As a rational map of degree $d>1$, the map $f_0$ has at least one repelling fixed point $s\in L_0$.  Since it lies in the Julia set $\J$ of $f_0$, the point $s$ differs from $L_0\cap L_1$.

Let $\pi_{T_i}:\L_{T_i}\to \J$ be as in the proof of Theorem \ref{thm:nuexists}.  By Lemma \ref{lem:lam}, the sets $D_i = \pi_{T_i}^{-1}(s)$ are complex disks properly embedded in $B(L_0,\epsilon)$, forward invariant and contracted by $f_{T_i}$.  The point $s_i := \bigcap f_{T_i}^n(D_i)$ is fixed of saddle type for $f_{T_i}$.
As $i\to\infty$, we have $s_i\to s$ and $D_i \to B(L_0,\epsilon)\cap \Pi^{-1}(s)$ uniformly.  Since $C_i\to L_0\cup L_1$, we also have for large $i$ that the central projection $\Pi:\P^2\setminus\{p_0\}$ onto $L_0$ restricts to an injective map of $C_i\cap B(L_0,\epsilon)$ onto $L_0$ minus a small neighborhood of $L_0\cap L_1$.  In particular $C_i\cap D_i\neq \emptyset$ for $i$ large enough.  

Since $C_i\cap D_i$ is forward invariant and closed, it follows that $C_i$ contains the saddle point $s_i$.  Since $C_i$ also contains a repelling fixed point near each preimage of $p_0$ and the unique attracting point in $B(L_0,\epsilon)$, we see that $f|_{C_i}$ has at least $d + 2$ fixed points.  However, a degree $d$ map of $\P^1$ has only $d+1$ fixed points.  This contradiction concludes the proof.
\end{proof}

We have now established the main results stated at the beginning of this paper.
That is, if equality $\lambda_2 = d$ holds for topological and algebraic degrees of
$f$, then Proposition \ref{PROP_AS} and Theorems \ref{THM_NO_FOLIATION},
\ref{thm:nuexists}, \ref{thm:muexists} and \ref{THM_MU_NOT_1D} together give Theorem \mainthma; and Theorem {\mainthmb } follows from Corollaries \ref{COR_AS} and \ref{COR_GENERIC_NIF}.

\section{A specific example}\label{SEC_PERTURBED_CHEBYSHEV}
\label{section:unperturbed}

In the remainder of this article, we consider some rather restricted linear perturbations of a particular rational map $f: \P^2 \dashrightarrow \P^2$ that is closely related to the one variable quadratic Chebyshev map and was studied in detail by Han Liu in his PhD thesis \cite{han}.  Specifically $f = g \circ h$ where
\begin{equation}
\begin{array}{rcl}
g[x_1:x_2:x_3] & = & [x_1^2:x_2^2:x_3^2], \quad \mbox{and} \\
h[x_1:x_2:x_3] & = & [x_1(-x_1+x_2+x_3):x_2(x_1-x_2+x_3):x_3(x_1+x_2-x_3)].
\end{array}
\end{equation}
Note that $h$ is a birational involution, linearly conjugate to the standard Cremona involution $[x_1,x_2,x_3]\mapsto [x_2x_3,x_3x_1,x_1x_2]$ via the (unique) automorphism of $\P^2$ that sends the points $[1,0,0],[0,1,0],[0,0,1],[1,1,1]$ to $[0,1,1], [1,0,1], [1,1,0],$\\ $[1,1,1]$, respectively.  Hence the indeterminacy set of $h$ is
$$
\I(h) = \{a_1,a_2,a_3\} := \{[0,1,1],[1,0,1],[0,1,1]\},
$$ 
and the exceptional set $\E(h)$ consists of the three lines $A_1,A_2,A_3$ joining these points.  Specifically $h(A_i) = a_i$ and vice versa, where $A_i$ is the line joining the pair $\I(h)\setminus\{a_i\}$.

Both maps $g$ and $h$ preserve the rational two form $\eta$ given in the affine
coordinates $(x,y)\mapsto [x,y,1]$ by $\eta = \frac{dx\wedge dy}{xy}$.  That
is, $g^*\eta = 4\eta$ and $h^*\eta = \eta$.  Hence $f^*\eta = (4\cdot 1)\eta$.
Plane rational maps that preserve meromorphic two forms (up to a multiplicative factor) are considered at length in
\cite{DL}. 
The map $f$ considered here is one of the simplest
instances we know of a non-invertible rational map that preserves a two form
and has non-obvious dynamics.  These dynamics are completely described in
\cite{han}, but here we pay attention only to aspects relevant to the theme of
this article.

Several elementary properties of $f$ are readily inferred from those of $g$ and $h$.

\begin{proposition}\label{PROP:BASIC_CHEB}
$f$ is a dominant rational map with topological and algebraic degrees $\lambda_2(f) = d(f) = 4$.  Moreover,
\begin{enumerate}
\item $f$ is symmetric in the homogeneous coordinates $x_1,x_2,x_3$ and preserves (modulo indeterminate points) the real slice $\RP^2 := \{[x_1,x_2,x_3]\in\P^2: x_j\in\R\}$.  
\item The poles $X_j := \{x_j=0\}$ of $\eta$ are each totally invariant by $f$.  These lines are also the ramification locus of $f$, and the restriction $f:X_j\to X_j$ to any one of them, when expressed in the two non-vanishing homogeneous coordinates, is the one variable map $z\mapsto z^2$.
\item The points $\{e_1,e_2,e_3\} := [1,0,0],[0,1,0],[0,0,1]$ are each fixed and superattracting.  
\item The only other (non-indeterminate) fixed point of $f$ is $[1,1,1]$, which is repelling.
\item The exceptional and indeterminacy loci of $f$ coincide with those of $h$, and $f(A_j) = a_j$.  In particular, $f$ is not algebraically stable on $\P^2$.
\item The critical divisor of $f$ is reduced with degree six.  Specifically $\cdiv(f) = \sum_i A_i + \sum_j X_j$.  
\end{enumerate}
\end{proposition}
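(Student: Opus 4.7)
The plan is to verify each item by a direct computation using the explicit formula
\begin{align*}
f_i = x_i^2\,\epsilon_i^2, \quad \epsilon_1=-x_1+x_2+x_3,\ \epsilon_2=x_1-x_2+x_3,\ \epsilon_3=x_1+x_2-x_3.
\end{align*}
Multiplicativity of topological degrees under composition gives $\lambda_2(f) = \lambda_2(g)\lambda_2(h) = 4 \cdot 1 = 4$. For the algebraic degree, I would check that the three quartics $f_i$ share no common linear factor by evaluating each of the six candidate forms $x_j, \epsilon_j$ on one of the other $f_i$ and observing it does not vanish identically, giving $d(f) = 4$. Dominance of $f$ is inherited from $g$ and $h$.

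Items (1)--(3) are direct inspections. The symmetry in (1) is manifest from the formulas for $g$ and $h$, and $\RP^2$-invariance follows from the reality of the coefficients. For (2), substituting $x_j = 0$ into $g$ and $h$ separately shows $h(X_j) \subseteq X_j$ and $g(X_j) \subseteq X_j$; total invariance then follows from $h$ being an involution with $h^{-1}(X_j) = X_j$ and from $g^{-1}(X_j) = X_j$. In the affine coordinate $z = x_1/x_2$ on $X_3$ a direct substitution gives $h\colon z \mapsto -z$ and hence $f\colon z \mapsto z^2$. For (3), $f(e_i) = e_i$ is immediate, and since $g$ reduces in affine coordinates centered at each $e_i$ to a pure squaring map, $Dg(e_i) = 0$, and the chain rule then forces $Df(e_i) = 0$.

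For (4), I would solve the fixed-point equations $x_i = \lambda\, x_i^2 \epsilon_i^2$ by case analysis on which $x_i$ vanish. Restricted to any coordinate line $X_j$, the map becomes $z \mapsto z^2$, whose fixed points in $\P^1$ produce only the $e_i$ and the indeterminate point $a_j$. Off all coordinate lines, the $S_3$-symmetry of the equations permits a reduction (after normalizing one coordinate to $1$) to a one-variable polynomial identity whose roots determine the candidate fixed points; $[1,1,1]$ appears among these, and repelling type follows by computing $Df$ in an affine chart at $[1,1,1]$ and checking that both eigenvalues have modulus strictly greater than $1$.

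Items (5) and (6) exploit the decomposition $f = g \circ h$. Since $g$ is a morphism, $\I(f) = \I(h)$ and $\E(f) = \E(h)$, and a direct substitution yields $f(A_j) = g(h(A_j)) = g(a_j) = a_j \in \I(f)$, which by Proposition~\ref{prop:ascriterion} obstructs algebraic stability. For the critical divisor, the chain rule suggests $\cdiv(g \circ h) = \cdiv(h) + h^\ast \cdiv(g)$ as divisors on $\P^2$. A local-chart computation identifies $\cdiv(g) = X_1 + X_2 + X_3$ and $\cdiv(h) = A_1 + A_2 + A_3$, and the pullback $h^\ast X_j$, computed via the factorization $h_j = x_j\,\epsilon_j$, redistributes the contributions so that each of the six curves $X_1, X_2, X_3, A_1, A_2, A_3$ enters the final divisor with multiplicity one. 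The main obstacle is precisely this last multiplicity bookkeeping in (6): one must track chart-by-chart how the exceptional lines of $h$ interact with the pulled-back ramification of $g$ so that the nominally doubled contribution along each $A_i$ collapses to multiplicity one, yielding the reduced divisor of degree six.
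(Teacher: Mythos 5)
The paper states this proposition without a written proof, so there is nothing to compare against directly; your factored-form approach is the natural one and is sound for the degree claims and for items (1), (2), (3) and (5). There are, however, two genuine gaps, both in exactly the places you flagged as delicate.

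Item (6). You correctly write $\cdiv(f) = \cdiv(h) + h^*\cdiv(g)$ with $\cdiv(h) = \sum A_i$, $\cdiv(g) = \sum X_j$, and $h^*X_j = X_j + A_j$ (since $h_j = x_j\epsilon_j$). This already yields $\cdiv(f) = \sum X_j + 2\sum A_i$, of degree $9 = 3(d(f)-1)$, and a direct computation confirms there is no room to negotiate: factoring $2x_i\epsilon_i$ from row $i$ of the Jacobian matrix $(\partial f_i/\partial x_j)$ leaves a $3\times 3$ determinant equal to $2\epsilon_1\epsilon_2\epsilon_3$, so the homogeneous Jacobian is $J_f = 16\,x_1 x_2 x_3\,(\epsilon_1\epsilon_2\epsilon_3)^2$, which vanishes to order exactly two along each $A_i$. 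The ``collapse'' you are hoping for does not occur, and no chart-by-chart bookkeeping will turn a degree-$9$ divisor into a degree-$6$ one. Your own chain-rule answer was correct; the discrepancy lies with the proposition's phrasing, whose ``reduced with degree six'' can only refer to the support of $\cdiv(f)$ (the six distinct critical lines), not to the Jacobian divisor under the paper's stated definition.

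Item (4). Your reduction to ``a one-variable polynomial identity whose roots determine the candidate fixed points'' tacitly assumes the off-axis fixed locus is finite, and it is not. The conic $Q=\{x_1^2+x_2^2+x_3^2-2(x_1x_2+x_2x_3+x_3x_1)=0\}$ is fixed pointwise by $f$ away from $\I$ — this is established in Proposition~\ref{prop:Q}(2), and one can spot-check it, e.g.\ $[4:1:1]\in Q$ and $f([4:1:1])=[64:16:16]=[4:1:1]$. Any honest solution of the fixed-point equations will produce this one-dimensional component, and the argument must first factor it out before one can conclude that $[1,1,1]$ is the only remaining (isolated) non-indeterminate fixed point. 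The repelling verification at $[1,1,1]$ is fine ($Df([1,1,1])=-2\,\mathrm{Id}$ by the chain rule, since $Dh=-\mathrm{Id}$ and $Dg=2\,\mathrm{Id}$ there), but as sketched your case analysis does not establish uniqueness and would in fact contradict it if carried through.

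Minor: in item (2), $\{h_j=0\}=X_j\cup A_j$ set-theoretically, so $h^{-1}(X_j)=X_j$ and $f^{-1}(X_j)=X_j$ only under the paper's convention that preimages omit exceptional curves; worth saying explicitly, since otherwise total invariance looks false.
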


In order to understand the real dynamics of $f$, it is convenient to employ affine coordinates that are adapted to emphasize the symmetry of $f$ with respect to the homogeneous variables.  Specifically, in what follows \emph{adapted affine coordinates} will mean affine coordinates $(x',y')$ on $\P^2\setminus L_0$, where $L_0=\{x_1+x_2+x_3=0\}$, that identify the repelling fixed point $[1,1,1]$ with $(0,0)$ and the superattracting fixed points $[1:0:0],[0:1:0],[0:0:1]$ with vertices of an equilateral triangle centered at $(0,0)$.  In these coordinates, Figure \ref{fig:basins} shows the real points in the basins associated to each superattracting point $e_1,e_2,e_3$.  This picture reveals some interesting aspects of the dynamics of $f$.  The complement of the (closures of the) basins is (apparently) the open disk $\U$ inscribed in the triangle $\{x_1x_2x_3=0\}$.  One can check that $\partial\U = \R\P^2\cap Q$, where $Q$ is the algebraic curve defined by $\rho(x_1,x_2,x_3) = x_1^2 + x_2^2+x_3^2 - 2(x_1x_2 + x_2x_3 + x_3x_1) = 0$) and that $Q\cap\{x_1x_2x_3=0\} = \I$, each intersection being a tangency.  

\begin{figure}
\centering
\includegraphics[width=90mm]{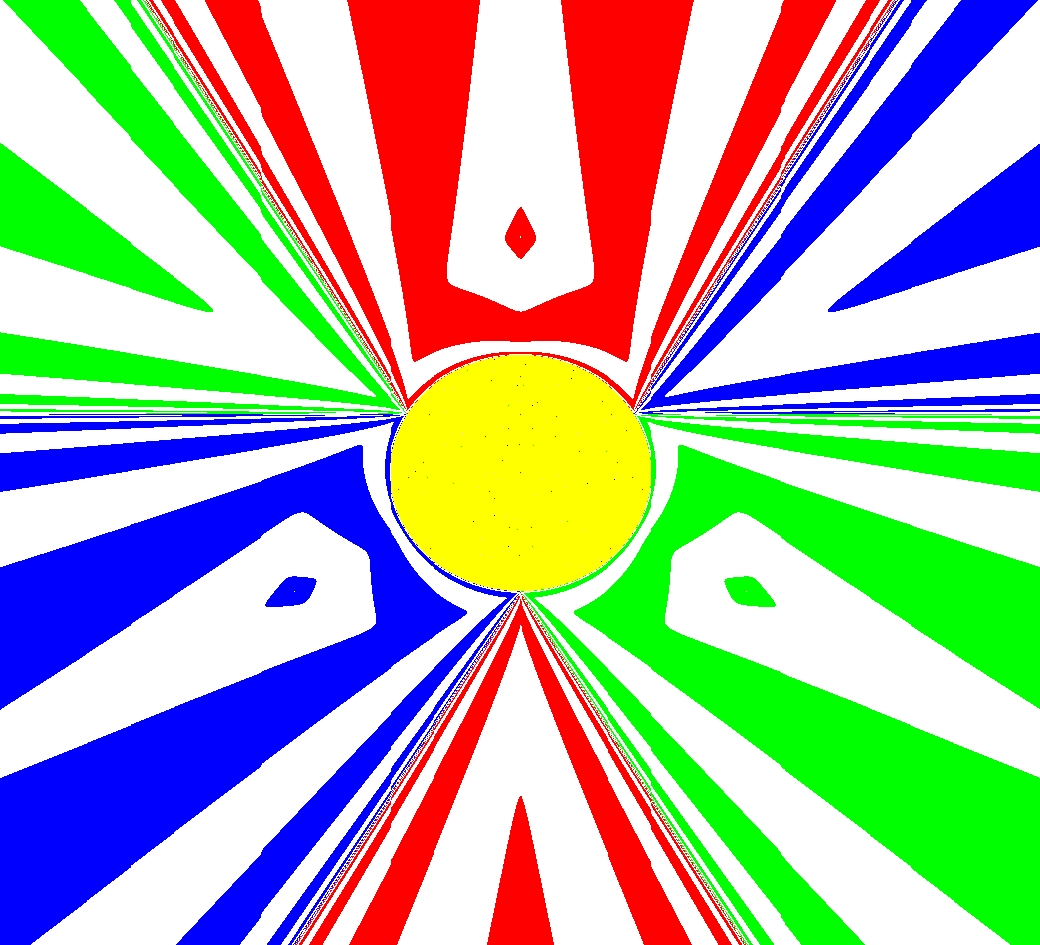}
\caption{Real parts of the three superattracting basins shown in adapted affine coordinates.  Alternating white/colored bands indicate the amount of time it takes for a point to get close to a superattracting point.  The disk $\U$ in the center is the complement of the basin closures.  Note also that the common boundary of each pair of basins is contained in an exceptional line $A_j$, and that $\I\subset\partial\U$ is the set where all three basins meet.  \label{fig:basins}}
\end{figure}

\begin{proposition} 
\label{prop:Q}
The conic curve $Q$ and the region $\U$ have the following properties.
\begin{enumerate}
\item $f(a_j) = Q$ for each $a_j\in \mathcal{I}(f)$. 
\item $f(p) = p$ for every $p\in Q\setminus \mathcal{I}(f)$.
\item $\U$ is totally invariant modulo the lines $A_j$, i.e. for every $p\notin A_1\cup A_2\cup A_3$, we have $p\in \U$ if and only if $f(p) \in \U$. 
\end{enumerate}
\end{proposition}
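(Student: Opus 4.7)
The plan is to reduce all three claims to a single polynomial identity. Let $\rho = x_1^2+x_2^2+x_3^2-2(x_1x_2+x_2x_3+x_3x_1)$ be the defining polynomial of $Q$, let $L_0 = x_1+x_2+x_3$, and let $A_1 = -x_1+x_2+x_3$, $A_2 = x_1-x_2+x_3$, $A_3 = x_1+x_2-x_3$ be the defining linear forms of the three exceptional lines. The components of $h$ take the compact form $h_j = x_j A_j$, so those of $f = g\circ h$ are $f_j = x_j^2 A_j^2$. The central identity I will establish is
\[
\rho\circ f \;=\; (A_1 A_2 A_3)^2 \cdot \rho.
\]

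I would derive this identity in two steps. First, a difference-of-squares factorization yields
\[
\rho(x_1^2,x_2^2,x_3^2)=((x_2-x_3)^2-x_1^2)((x_2+x_3)^2-x_1^2) = -L_0\cdot A_1\cdot A_2\cdot A_3,
\]
so $\rho\circ g = -L_0 A_1 A_2 A_3$. Second, substituting $h_j = x_jA_j$ into each linear form gives $L_0\circ h = x_1A_1+x_2A_2+x_3A_3 = -\rho$ and, after short expansions, $A_j\circ h = A_kA_\ell$ whenever $\{j,k,\ell\}=\{1,2,3\}$. Composing $\rho\circ g\circ h$ using these substitutions yields the central identity.

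Given the identity, the three parts follow quickly. For (1), since $h$ is a linear conjugate of the standard Cremona involution, it contracts the line $A_j$ to $a_j$ and symmetrically blows $a_j$ up to $A_j$; this can also be verified directly by approaching $a_j$ along two transverse curves. Hence $f(a_j)=g(h(a_j)) = g(A_j)$, and since $\rho\circ g = -L_0 A_1 A_2 A_3$ vanishes identically on $A_j$, we have $g(A_j)\subset Q$. As $g(A_j)$ is a nontrivial irreducible curve and $Q$ is irreducible of degree two, equality $g(A_j) = Q$ follows. For (2), I will parameterize $Q$ rationally by $[s:t]\mapsto [s^2:t^2:(s-t)^2]$ and verify, using the identities $-s^2+t^2+(s-t)^2 = 2t(t-s)$, $s^2-t^2+(s-t)^2 = 2s(s-t)$, and $s^2+t^2-(s-t)^2 = 2st$, that $h[s^2:t^2:(s-t)^2] = [-s:t:s-t]$; applying $g$ then returns $[s^2:t^2:(s-t)^2]$, so $f$ is the identity on $Q\setminus\I(f)$.

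For (3), on $\RP^2$ the factor $(A_1 A_2 A_3)^2$ is nonnegative and vanishes exactly on $A_1\cup A_2\cup A_3$, so the central identity shows that $f$ preserves the sign of $\rho$ off the exceptional locus. Since $Q\cap\RP^2$ is a smooth real conic separating $\RP^2$ into a disk $\U$ and a M\"obius band, and since $\rho([1,1,1]) = -3 < 0$ while $\rho([1,0,0]) = 1 > 0$, we identify $\U$ with the component $\{\rho<0\}$. Thus for $p\in\RP^2\setminus(A_1\cup A_2\cup A_3)$, the biconditional $p\in\U\iff f(p)\in\U$ is immediate. The main obstacle is the clean derivation of the central identity; once it is in hand each claim reduces to a short check, with the handling of $h$ at the indeterminate point $a_j$ in part (1) being the only other subtle step, and it is standard for Cremona-type maps.
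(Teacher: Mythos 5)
Your proposal is correct, and it reorganizes the argument around a single explicit polynomial identity that the paper never writes down in full. The computations all check out: $\rho\circ g = -L_0A_1A_2A_3$ follows from the factorization you give (note $(x_2-x_3)^2-x_1^2 = -A_2A_3$ and $(x_2+x_3)^2-x_1^2 = L_0A_1$), the substitutions $L_0\circ h=-\rho$ and $A_j\circ h = A_kA_\ell$ are short expansions, and composing gives $\rho\circ f = \rho\,(A_1A_2A_3)^2$. The paper instead derives the divisor equalities $g^*Q = A_1+A_2+A_3+L_0$ and $f^*Q = 2(A_1+A_2+A_3)+Q$ by soft reasoning (degree count, symmetry, and the fact that $g^*Q$ must contain the $A_j$), which produces the affine relation $\rho_{\mathrm{aff}}\circ f = c\,\rho_{\mathrm{aff}}\rho_1^2\rho_2^2\rho_3^2$ with an undetermined constant $c$ whose sign must then be pinned down by evaluating at the fixed point $(1,1)$; your explicit identity gives $c=1$ for free. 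The two approaches also diverge on part (2): the paper shows $f|_Q$ is an automorphism of a rational curve (by checking $h:Q\to L_0$ and $g:L_0\to Q$ are both bijective) and then invokes the threefold symmetry to produce three fixed points, whereas you parameterize $Q$ by $[s:t]\mapsto[s^2:t^2:(s-t)^2]$ and compute $f$ directly, which is more hands-on but equally valid. For part (1), the paper characterizes $Q$ as the unique conic tangent to the three coordinate lines at the prescribed points, while you deduce $g(A_j)\subset Q$ from the vanishing of $\rho\circ g$ on $A_j$ and conclude by irreducibility; your route avoids the tangency count. One stylistic advantage of your version for part (3) is that working with the sign of the homogeneous quadratic $\rho$ on $\RP^2\setminus Q$ (well-defined since $\lambda^2>0$) handles points of $L_0$ at infinity cleanly without passing to an affine chart.
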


\begin{proof}
We have that $f(a_j) = g(h(a_j)) = g(A_j)$ is a conic curve tangent to each coordinate axis $X_i$ at the point $g(A_j\cap X_i) = \mathcal{I}(f)\cap X_i$, independent of $j$.  Counting the number of conditions that the tangencies impose on the defining polynomial, one finds there is only one such conic, so we must have $f(a_j) = Q$.  

To see that $Q$ is totally invariant by $f$, note that $g^*Q$ is a divisor of degree 4 with defining polynomial that is symmetric in the homogeneous coordinates $x_1,x_2,x_3$ and that the support of $g^*Q$ includes (by the previous paragraph) the lines $A_1,A_2,A_3$.   It follows that $g^*Q = A_1+A_2 + A_3 + L_0$, where $L_0 = \{x_0+x_1+x_1=0\}$ is the `line at infinity' in Figure \ref{fig:basins}.   Now $h^{-1} = h$ collapses each line $A_j$ to a point, and $h^*L_0$ is an effective divisor of degree two passing through each point in $\mathcal{I}(f)$ and (again) symmetric in $x_1,x_2,x_3$.  It follows that $f^{-1}(Q) = h^{-1}(L_0) = \mathop{\supp} h^*L_0 = Q.$

Now we argue that $f|_Q = \mathrm{id}$.  Since $h$ is an involution mapping $L_0$ to $Q$, we have that $h$ maps $Q$ bijectively onto $L_0$.   Thus, $g(L_0) = Q$, and $g_*L_0 = k Q$ where $k$ is the topological degree of the restriction $g|_{L_0}$.  Since $g(L_0)$ is a divisor of degree two, we have $k=1$, i.e. $g:L_0 \to Q$ is also injective.  Hence $f|_Q$ is an automorphism of a rational curve.  Symmetry with respect to homogeneous coordinates dictates that $f$ fixes each of the three points on $Q$ where two of the three homogeneous coordinates agree.  Hence the restricted map, a linear fractional transformation fixing three points, must be the identity.

Using the fact that $(1,1)\in \U$ one sees that 
\begin{align*}
\U = \{(x,y)\in \R^2:\rho_\mathrm{aff}(x,y) < 0\},
\end{align*}
 where $(x,y) = [x,y,1]$ are (non-adapted) affine coordinates and $\rho_\mathrm{aff}(x,y) = \rho(x,y,1)$.  Continuing the computation of the pullback of $Q$ begun above, we arrive at
$$
f^*Q = h^*(A_1+A_2+A_3+L_0) = 2(A_1+A_2+A_3) + Q.
$$
Hence $\rho_\mathrm{aff}\circ f = c \rho_\mathrm{aff} \rho_1^2\rho_2^2\rho_3^2$ where $\rho_j$ is an affine defining function for the line $A_j$ and $c$ is a constant.  Applying this formula to $(1,1) = f(1,1)$, we see that $c > 0$.  Hence for any point $p\notin A_1\cup A_2\cup A_3$, we see that $\rho_\mathrm{aff}(p)$ has the same sign as $\rho_\mathrm{aff}(f(p))$.  This proves the final assertion in the proposition.
\end{proof}

\begin{corollary}
\label{cor:eqmeasure} $f$ has topological entropy equal to $\log 4$, and there is a unique measure of maximal entropy for $f$ given by $\mu = \frac{1}{2\pi^2}  {\bm 1}_{\U}  \eta$.  In particular,  $h_{top}(f|_{\RP^2}) = h_{top}(f) = \log 4$ and repelling periodic points of $f$ are dense in $\U$.  
\end{corollary}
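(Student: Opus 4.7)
My plan is to establish the three assertions of the corollary---$f$-invariance of $\mu$, the computation of $h_\mu(f)=\log 4$, and uniqueness plus density of repelling periodic points in $\U$---in that order, exploiting the structural facts already recorded in Propositions \ref{PROP:BASIC_CHEB} and \ref{prop:Q}.

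\emph{Invariance.} By Proposition \ref{prop:Q}(3) the region $\U$ is totally invariant modulo the exceptional lines, and $\lambda_2(f)=4$ makes $f|_\U$ a $4$-to-$1$ branched covering of $\U$ onto itself off an $|\eta|$-null set. The identity $f^*\eta=4\eta$ reads $\det Df = 4\,f_1 f_2/(xy)$ in the affine chart $[x:y:1]$, so summing the change-of-variables formula over the four preimage branches in $\U$ yields $f_*\mu=\mu$. Finiteness of $\int_\U|\eta|$ (needed for normalization) holds because the tangencies of $\partial \U$ with $\{xy=0\}$ at the three points of $\I(f)$ are second order, producing only integrable logarithmic singularities of the density $1/|xy|$.

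\emph{Entropy.} Rokhlin's formula gives $h_\mu(f)=\int\log|\det Df|\,d\mu$ (applicable since $\mu$ is absolutely continuous with respect to Lebesgue on its support and the Chebyshev-like structure forces both Lyapunov exponents of $\mu$ to be positive). Substituting $\det Df = 4 f_1 f_2/(xy)$ and using $\log|f_1 f_2|=\log|xy|\circ f$, the $f$-invariance of $\mu$ causes the cross terms to cancel, leaving $h_\mu(f)=\log 4$. Combined with the Gromov-Dinh-Sibony bound $h_{top}(f)\leq\log\max\{\lambda_1,\lambda_2\}=\log 4$ and the variational principle, this forces $h_{top}(f)=\log 4$; the same chain applied to $f|_{\RP^2}$ (noting $\supp\mu\subset\RP^2$) yields $h_{top}(f|_{\RP^2})=\log 4$.

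\emph{Uniqueness and periodic points.} By Proposition \ref{PROP:BASIC_CHEB}(5) each exceptional line $A_j$ is collapsed to an indeterminate point $a_j$, so Proposition \ref{prop:ascriterion} gives $\lambda_1(f)<d(f)=4=\lambda_2(f)$. In this cohomologically hyperbolic regime the theorems of Guedj \cite{Guedj} and Dinh-Nguyen-Truong \cite{DNT} provide a unique $f$-invariant probability measure of maximal entropy $\log\lambda_2=\log 4$ and establish asymptotic equidistribution of repelling periodic points of period $n$ toward it. Uniqueness identifies this measure with our $\mu$; since the density $1/(2\pi^2|xy|)$ is strictly positive throughout $\U$, $\supp\mu=\overline{\U}$, and the equidistribution yields density of repelling periodic points in $\U$.

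The principal obstacle is justifying the Rokhlin step in this singular setting: one must verify that $\log|xy|$ lies in $L^1(\mu)$ near the three boundary tangencies and that Pesin theory applies $\mu$-almost everywhere. Both reduce to a short local computation showing that the density $1/|xy|$ and its logarithm interact tolerably with the quadratic tangencies at $\I(f)$; the remainder is essentially assembly of pieces already present in the paper plus citation of the Guedj--DNT machinery.
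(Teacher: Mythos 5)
Your proposal is essentially correct and follows the paper's overall outline (construct $\mu$ from $\eta$ on $\U$, show $h_\mu(f)=\log 4$, close with Gromov's bound and Guedj), but it implements the entropy step by a different and somewhat riskier route. The paper simply observes that $f^*\mu = 4\mu = \lambda_2(f)\mu$ and cites Parry: when the pullback of a probability measure is $d$ times itself, the Rokhlin Jacobian of $f$ with respect to $\mu$ is constant equal to $d$, so $h_\mu(f)=\log d$ with no reference to Lyapunov exponents, Pesin theory, or integrability of $\log|xy|$. Your version reconstructs this via $\det Df = 4 f_1 f_2/(xy)$ and then appeals to ``Rokhlin's formula'' $h_\mu(f)=\int\log|\det Df|\,d\mu$; but that identity is really Pesin's (Ledrappier--Young) formula applied under the unproven assertion that both Lyapunov exponents of $\mu$ are positive, and it additionally requires $\log|xy|\in L^1(\mu)$ for the claimed cancellation. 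None of that is necessary: if you instead compute the Rokhlin Jacobian $\mathrm{Jac}_\mu f=\dfrac{\rho\circ f}{\rho}\,|\det Df|$ with $\rho=\tfrac{1}{|xy|}$ directly, the density terms cancel pointwise against your formula for $\det Df$, giving $\mathrm{Jac}_\mu f\equiv 4$ and hence $h_\mu(f)=\log 4$ with no integrability or hyperbolicity hypotheses. That is exactly what ``$f^*\mu=4\mu$ plus Parry'' packages in one line. The invariance and uniqueness/density arguments in your proposal match the paper's (uniqueness via $\lambda_1<\lambda_2$ and Guedj), and the observation that $\supp\mu\subset\RP^2$ yields the real-entropy claim is a useful explicit addition the paper leaves implicit.
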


\begin{proof}
Note that the two form $\eta$ naturally defines a positive measure on $\U$.  A routine computation shows that this measure has finite mass equal to $2\pi^2$.  Since $\U$ is totally invariant by $f$ and $f^*\eta = 4\eta$, it follows that $f^*\mu = 4\mu = \lambda_2(f)\mu$.  Hence by \cite{PARRY} $h_\mu(f) = \log 4$.  But the variational principal and Gromov's bound \eqref{eqn:gromovbd} for the entropy of a rational map tell us that
$$
h_{\mu}(f) \leq h_{top}(f) \leq \log\max\{\lambda_1(f),\lambda_2(f)\} \leq \log\max\{d(f),\lambda_2(f)\} = \log 4
$$
so that $h_{top}(f) =\log 4$, too.  In fact, $\lambda_1(f) < d(f)$ by Part (5) of Proposition \ref{PROP:BASIC_CHEB}, so uniqueness of the measure of maximal entropy and density of repelling cycles follows from the main results of \cite{Guedj}.
\end{proof}

\begin{remark}  It is shown in \cite{han} that $\lambda_1(f) = 2$.
\end{remark}

The lines $A_j$ partition the disk $\U$ into four simply connected open sets $\U_i$, $i~=~0,1,2,3$.  We index these so that $\U_0$ denotes the center triangle, and $\U_i$, $i=1,2,3$, denotes the set bounded by $A_i$ and $Q$.  The partition $\left\{\overline{\U_i}\right\}_{i=0}^3$ maps forward well.

\begin{figure}
\centering
\includegraphics[width=90mm]{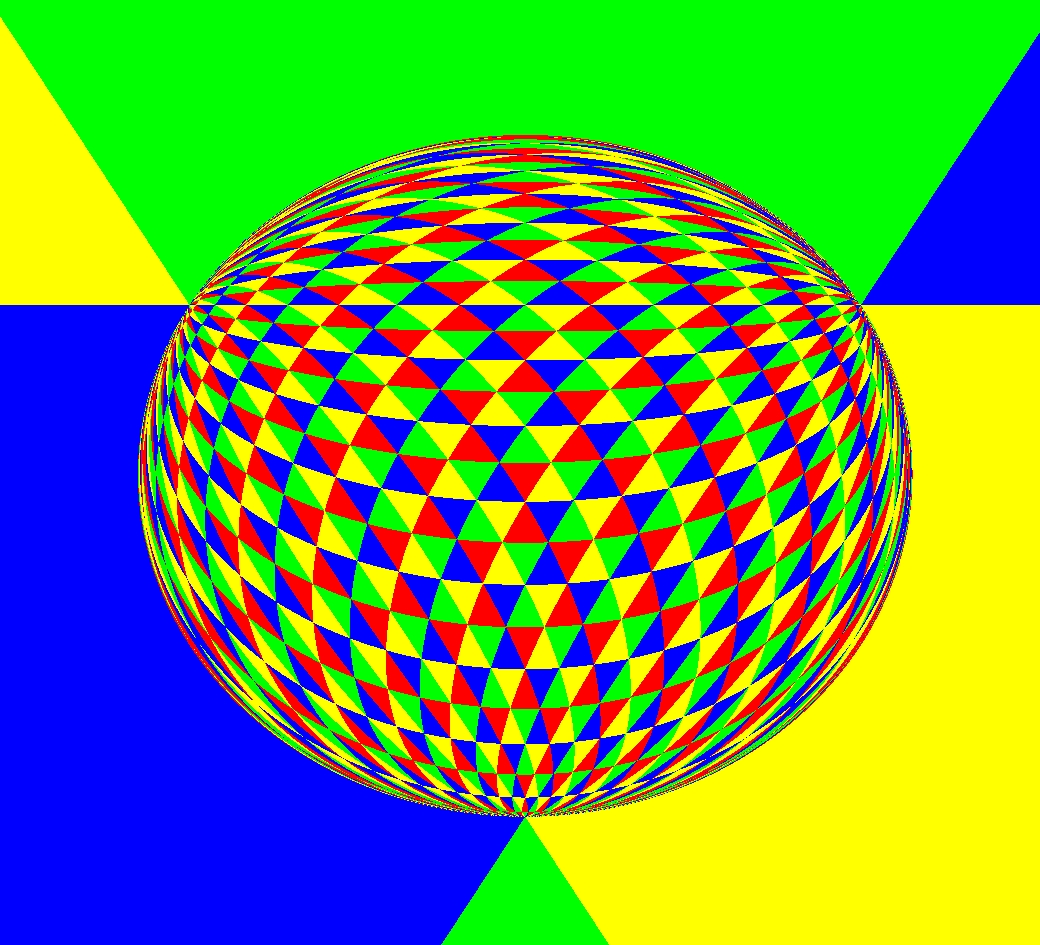}
\caption{Dynamics on $\U$. Each point $p$ is colored according to the connected component of $\U\setminus \{A_1,A_2,A_3\}$ that contains $f^4(p)$.  \label{fig:partition}}
\end{figure}

\begin{proposition}\label{prop:partition}
$f$ maps each region $\U_j$ homeomorphically onto $\U$.  
\end{proposition}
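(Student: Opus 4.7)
The plan is to show that each restriction $f|_{\U_j}:\U_j\to\U$ is a proper, unramified holomorphic map between simply connected domains, hence a biholomorphism of degree one.

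First I would verify that $f(\U_j)\subset\U$. Each $\U_j$ is disjoint from the lines $A_1\cup A_2\cup A_3$ (these lines form the partition), and $\U_j\subset\U$. By Proposition \ref{prop:Q}(3), for any $p\notin A_1\cup A_2\cup A_3$ one has $p\in\U\iff f(p)\in\U$, so $f(\U_j)\subset\U$.

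Next I would establish properness of $f|_{\U_j}:\U_j\to\U$. For this I take any sequence $p_n\in\U_j$ accumulating on a boundary point $p\in\partial\U_j$ and show the images accumulate on $\partial\U$. The boundary $\partial\U_j$ is contained in $Q\cup A_1\cup A_2\cup A_3$, and splits into three cases: (i) if $p\in Q\setminus\I$, then $f$ is continuous at $p$ and $f(p)=p\in Q\subset\partial\U$ by Proposition \ref{prop:Q}(2); (ii) if $p\in A_i\setminus\I$, then $f$ is continuous at $p$ and $f(p)=a_i\in\I\subset\partial\U$, since $A_i$ is exceptional with $f(A_i)=a_i$ (Proposition \ref{PROP:BASIC_CHEB}(5)); (iii) if $p\in\I=\{a_1,a_2,a_3\}$, then every cluster value of $f(p_n)$ lies in the image curve $f(a_k)=Q\subset\partial\U$ by Proposition \ref{prop:Q}(1). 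In all three cases, the limit points of $f(p_n)$ lie in $\partial\U$, so $f|_{\U_j}$ is proper.

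Now I would observe that $f|_{\U_j}$ has no critical points. By Proposition \ref{PROP:BASIC_CHEB}(6), $\cdiv(f)=\sum_i A_i+\sum_i X_i$. The lines $A_i$ do not meet $\U_j$ by construction, and the coordinate axes $X_i$ are disjoint from $\U$ (they form the triangle circumscribing the inscribed disk $\U$, touching $\overline{\U}$ only tangentially at the points of $\I\subset\partial\U$). Hence $\U_j$ is disjoint from $\cdiv(f)$, and $f|_{\U_j}$ is everywhere a local biholomorphism.

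Finally, a proper holomorphic map $f|_{\U_j}:\U_j\to\U$ without critical points is an unramified finite covering of $\U$. Since each $\U_j$ is simply connected (the central region $\U_0$ is an open triangle, while each $\U_i$, $i=1,2,3$, is a region bounded by a chord $A_i$ and an arc of $Q$) and $\U$ is simply connected, this covering must have degree one, i.e.\ be a homeomorphism. The main subtlety is the properness step near the indeterminate points $a_k\in\partial\U$, but it is handled cleanly by the fact that $f(a_k)=Q$ lies entirely on $\partial\U$.
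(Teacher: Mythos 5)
Your proof is correct but takes a genuinely different route from the paper's. The paper exploits the factorization $f = g\circ h$: it first observes that $h$ carries each region onto $\U_0$ (e.g.\ $h(\U_0) = \U_0$ since $(x,y)\mapsto(1/x,1/y)$ preserves the first quadrant, and $h(\U_3) = -\U_0$), so $f(\U_j) = g(\U_0)$ in every case; it then checks by explicit computation that $g$ maps $\partial\U_0$ homeomorphically onto $Q$, concluding $g(\U_0) = \U$, and finishes by appealing to the topological degree $4$ of $f$. You instead argue abstractly: each restriction $f|_{\U_j}:\U_j\to\U$ is a proper local homeomorphism onto a simply connected target, hence a degree-one covering, hence a homeomorphism. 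Your properness check at the indeterminate points via $f(a_k)=Q\subset\partial\U$ is exactly the right way to handle that subtlety, and the absence of critical points follows from $\U_j\cap\cdiv(f)=\emptyset$ as you say. This is structurally cleaner and avoids the coordinate-by-coordinate verification in the paper, at the cost of invoking the standard covering-space fact. Two small caveats: since $\U$ and $\U_j$ are open subsets of $\RP^2$, the map $f|_{\U_j}$ is a real-analytic local diffeomorphism rather than literally ``holomorphic'' or a ``biholomorphism'' (the nonvanishing of the complex Jacobian on $\U_j$ does imply the restricted real map is a local diffeomorphism, so the argument stands), and simple connectivity of $\U_j$ is not actually needed --- connectivity of $\U_j$ together with simple connectivity of the target $\U$ already forces the covering to be trivial.
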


\begin{proof}
The last conclusion of Proposition \ref{prop:Q} tells us that $f(\U_j) \subset \U$.  Since $f$ has topological degree $4$, it will suffice to show that $\U\subset f(\U_j)$ for $j=0,1,2,3$.

Consider first the image of $\U_0$.  From the fact that $(x,y)\mapsto (1/x,1/y)$ preserves the first quadrant, it follows that $h(\U_0) = \U_0$.  One then sees that $g$ maps $\partial \U_0$ homeomorphically to $Q$.  Indeed $A_3 \cap \partial \U_0$ is the line segment in $\R^2$ joining $(1,0)$ to $(0,1)$), and one verifies easily that $s$ maps  this segment homeomorphically onto the portion of $Q$ joining $(1,0)$ to $(0,1)$.  Similar observations apply to the the other two sides of $\U_0$.   It follows that $f(\U_0) = s(\U_0) = \U$.

One shows similarly, that $f(\U_j) = \U$ for $j=1,2,3$.  For instance $h(\U_3) = \{(x,y)\in\R^2:(-x,-y)\in \U_0\}$.  Hence $f(\U_3) = g(h(\U_3)) = g(\U_0) = \U$.
\end{proof}

Proposition \ref{prop:partition} suggests that $\U_j$, $j=0,1,2,3$ might be a Markov partition for the dynamics of $f$ on $\U$.  The presence of points of indeterminacy in $\partial\U$ makes this idea a bit tricky to verify, but it is nevertheless carried out in detail in \cite{han}, which gives a complete account of both the real and complex dynamics of $f$ on all of $\P^2$.  Our purpose here is to consider the dynamics of a family $f_t$, $t\in[0,1]$ of perturbations of $f$, so we turn now to these.

For each $t\in[0,1]$ we set $f_t:=T_t\circ f$ where $T_t$ is the linear map given in homogeneous coordinates by 
$$
T_t[x_1:x_2:x_3] = [x_1:x_2:x_3]-\frac{1-t}{3}(x_1+x_2+x_3)[1:1:1].
$$
Then $T_0$ is the central projection from $\p:=[1:1:1]$ to the line $L_0 := \{x_1+x_2+x_3=0\}$, and in adapted affine coordinates $T_t$, $t\neq 0$ is the scaling map $(x',y')\mapsto t^{-1}(x',y')$.  In particular, for $t\in (0,1)$, we have that $\U_t := T_t(\U) \supset\overline{\U}$.  Since $\U$ is totally invariant by $f$, it follows that 
\begin{equation}
\label{EQN_BACKWARD}
f_t^{-n}\left(\overline{\U}\right) \subset \U \text{ for all } n\in\N.
\end{equation}
As in \S \ref{SEC_PROOFS}, the indeterminacy set $\I := \I(f) = \I(f_t)$ is independent of the perturbation. 

Our goal in the remainder of this section is to establish the following two results about dynamics of $f_t$.

\begin{theorem}\label{THM_REAL_MEASURE}
For all $t \in (0,1)$, the following are true for $f_t$.
\begin{itemize}
 \item[(a)] $f_t$ is algebraically stable with dynamical degrees $\lambda_1=\lambda_2 = 4$.
 \item[(b)] There is no $f_t$-invariant foliation.   
 \item[(c)] The topological entropy of $f_t$ as a real (and complex) map is $\log 4$.
 \item[(d)] More precisely, $f_t$ admits an ergodic invariant measure $\mu$ of (maximal) entropy $\log 4$ with $\supp\mu\subset\U$ 
 \item[(e)] $\supp\mu$ is not contained in any algebraic curve.
\end{itemize}
\end{theorem}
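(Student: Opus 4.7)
The approach is to execute the general strategy of \S\ref{SEC_PROOFS} in the explicit setting afforded by the forward-invariant disk $\U$, using the Markov partition $\{\U_j\}_{j=0}^{3}$ of Proposition \ref{prop:partition} in place of the tubular neighborhoods $B_\epsilon(L_0)$ and $B_\epsilon(f^{-1}(p_0))$. For (a), formula \eqref{EQN_BACKWARD} says precisely that $\P^2\setminus\overline{\U}$ is totally forward-invariant under $f_t$. In adapted affine coordinates $T_t$ scales by $1/t>1$ about the center of $\U$, so $f_t(A_j)=T_t(a_j)$ lies strictly outside $\overline{\U}$ for each $t\in(0,1)$. Hence the forward orbit of every exceptional line misses $\I(f_t)\subset\partial\U$, and Proposition \ref{prop:ascriterion} yields algebraic stability together with $\lambda_1(f_t)=d(f_t)=4$; the topological degree $\lambda_2(f_t)=4$ is unchanged by composition with an automorphism.

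For (c) and (d), combining Proposition \ref{prop:partition} with $T_t(\overline{\U})=\overline{\U_t}\supsetneq\overline{\U}$ produces four holomorphic inverse branches $\phi_j:=(f_t|_{\U_j})^{-1}$ defined on a small complex neighborhood of $\overline{\U_t}$, whose images $\phi_j(\overline{\U})$ are compactly contained in $\U_t$. The Schwarz lemma in this Kobayashi-hyperbolic setting makes $\{\phi_0,\phi_1,\phi_2,\phi_3\}$ a uniformly contracting conformal IFS, and the maximal invariant set
\begin{equation*}
\Lambda_t:=\bigcap_{n\geq 0}f_t^{-n}(\overline{\U})\subset\U
\end{equation*}
is topologically conjugate via its itinerary map, modulo boundary overlap, to the one-sided $4$-shift. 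Pulling back the unique Bernoulli measure of maximal entropy produces an ergodic $f_t$-invariant measure $\mu$ on $\Lambda_t$ with $h_\mu(f_t)=\log 4$, saturating Gromov's bound $h_{top}(f_t)\leq\log\max\{\lambda_1(f_t),\lambda_2(f_t)\}=\log 4$ and establishing (c) and (d).

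For (b), I apply Theorem \ref{THM_CONDITIONS_FOR_NO_FOLIATION} at $q=a_1\in\I(f_t)$. The IFS from the previous step provides infinitely many preimages of $a_1$ inside $\U\setminus\I(f_t)$ along which $f_t$ is finite and holomorphic, verifying condition (2). For condition (1), the conic $f_t(a_1)=T_t(Q)$ lies in the forward-invariant holomorphic region $\P^2\setminus\overline{\U}$, and an intersection count with a fixed reference line, in the spirit of Lemma \ref{lem:curvegrows}, produces irreducible components $C_n\subset f_t^{n-1}(T_t(Q))\subset f_t^n(a_1)$ with $\deg C_n\to\infty$.

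For (e), I follow Lemma \ref{LEM_SMOOTH_CURVE} to reduce to the case that any algebraic curve $C\supset\supp\mu$ is smooth, irreducible, rational, totally invariant with $f_t|_C$ of degree four, and contains the three points $T_t(a_1),T_t(a_2),T_t(a_3)$; being a smooth rational curve in $\P^2$, $C$ has degree one or two. The $S_3$-symmetry of $f$ forces $a_1,a_2,a_3$ (and hence their $T_t$-images) to be vertices of an equilateral triangle in adapted coordinates, excluding $\deg C=1$. To rule out $\deg C=2$, I take a sequence $t_n\to 1^-$ and extract a subsequential limit $C_{t_n}\to C_\infty$ of conics of degree at most two; weak continuity of measures of maximal entropy combined with Corollary \ref{cor:eqmeasure} gives $\mu_{t_n}\to\mu_1=\frac{1}{2\pi^2}\mathbf{1}_{\U}\eta$, so $\overline{\U}=\supp\mu_1\subset C_\infty$, contradicting $\dim_{\R}\overline{\U}=2$. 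The main obstacle is precisely (e): the abstract Theorem \ref{THM_MU_NOT_1D} is proved by choosing $T_0$ so that $T_0\circ f|_{L_0}$ is hyperbolic, a luxury unavailable here since $T_t$ is prescribed; the $S_3$-symmetry of $f$ together with the limit $t\to 1^-$ must substitute.
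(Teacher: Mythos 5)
Parts (a), (c), and (d) are on track. Your (a) matches the paper's argument exactly. For (c)--(d) you take a slightly different route: you claim the inverse branches $\phi_j$ form a uniformly contracting IFS (via the Kobayashi/Schwarz argument), so the itinerary map is a conjugacy and you pull back the Bernoulli measure directly. The paper is more economical: it only proves $\iota:\K\to\Sigma_4$ is a continuous \emph{semi}conjugacy (no injectivity claimed), which already gives $h_{top}(f_t)\geq\log 4$, and then produces $\mu$ by a Ces\`aro-average construction. Your version should work, but note that $\phi_j$ can only be extended holomorphically to a complex neighborhood of $\overline{\U}$, not of $\overline{\U_t}$ as you wrote: the critical-value set of $f_t$ (images of the $A_i$ and $X_j$) touches $\partial\U_t$, so no inverse branch exists past that boundary.

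Part (b) is underdeveloped. Saying ``an intersection count with a fixed reference line, in the spirit of Lemma~\ref{lem:curvegrows}'' does not actually prove $\limsup\deg C_n=\infty$. Lemma~\ref{lem:curvegrows} uses condition (D) together with the tubular-neighborhood setup ($T$ close to $T_0$, shrinking $\epsilon$), which is simply unavailable here: for a fixed $t$ well away from $0$, the forward-invariant region is $\P^2\setminus\overline{\U}$, not a thin neighborhood of a line. The paper instead exploits the invariant diagonal line $L=\{[x:x:z]\}$: it shows $f_t^n(a_j)$ is irreducible (not containing $L$), meets $A_3$, and therefore contains the orbit segment $f_t(A_3),\dots,f_t^{n-1}(A_3)\subset L$, which consists of distinct points because of \eqref{EQN_BACKWARD}; hence $\deg f_t^n(a_j)\geq\#\bigl(f_t^n(a_j)\cap L\bigr)\geq n-1$. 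This concrete mechanism is what you need to supply; the ``spirit of Lemma~\ref{lem:curvegrows}'' is not a substitute.

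Part (e) contains the most serious gap. Your plan is to take $t_n\to1^-$, extract a Hausdorff limit $C_{t_n}\to C_\infty$, and invoke ``weak continuity of measures of maximal entropy'' plus Corollary~\ref{cor:eqmeasure} to get $\overline{\U}\subset C_\infty$. Two problems. First, weak continuity of $t\mapsto\mu_t$ is not established anywhere --- $\mu_t$ is produced by a Ces\`aro limit, and such measures need not vary continuously with the parameter; you would need a separate argument (e.g.\ uniqueness plus upper semicontinuity of entropy) that is itself nontrivial. Second, and more fundamentally, your argument only derives a contradiction if $\supp\mu_{t_n}$ is contained in a conic for an entire sequence $t_n\to1^-$; it does not rule out failure at a single $t_0\in(0,1)$ away from $1$, which is what part (e) asserts. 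The paper avoids both problems with an argument local in $t$: after reducing via Lemma~\ref{LEM_SMOOTH_CURVE} to $f_t^{-1}(C)=C$ and $f_t^*C=C+\sum n_iA_i$ with some $n_i>0$, it observes that the forward orbit of $f_t(A_3)\in L$ lies in $C$ and in the invariant line $L$, has infinitely many distinct points by \eqref{EQN_BACKWARD}, so $C=L$; and then a direct check that $L$ is not backward invariant gives the contradiction. Your observation that the $T_t(a_j)$ are not collinear does rule out $\deg C=1$, but you need a pointwise (in $t$) mechanism to rule out $\deg C=2$, and the invariant line $L$ provides exactly that.
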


\begin{theorem}\label{THM_TWO_MEASURES}
For all positive $t$ close enough to $0$, $f_t$ admits exactly two ergodic measures $\mu$ (the measure in Theorem \ref{THM_REAL_MEASURE}) and $\nu$ of maximal entropy $\log 4$.  Moreover,
\begin{itemize}
\item[(f)] $f_t$ is hyperbolic of repelling type on $\A := \supp\mu$.
\item[(g)] $f_t$ is hyperbolic of saddle type on $\Omega:= \supp\nu$.
\item[(h)] Periodic points are dense in $\A$ and $\Omega$
\item[(i)] Neither measure is supported on an algebraic curve.
\item[(j)] $\A$ is real whereas $\Omega$ is not.
\item[(k)] $f_t$ has three periodic points $\fp_1,\fp_2,$ and $\fp_3$ not in $\A\cup\Omega$, all fixed and attracting.  
\item[(l)] For any $z\in\P^2$, exactly one of the following occurs.
\begin{itemize}
\item $f_t^n(z)\in \A$ for all $n\geq 0$.
\item $f_t^n(z) \in \I$ for some $n\geq 0$.
\item $f_t^n(z) \to \Omega$ as $n\to\infty$.
\item $f_t^n(z)$ tends to one of the attracting point $\fp_j$ as $n\to\infty$.
\end{itemize}
\end{itemize}
\end{theorem}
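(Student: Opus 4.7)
The plan is to recognize the family $\{f_t\}_{t\in(0,1]}$ as a specific instance of the general perturbation framework from Section~\ref{SEC_PROOFS}. With $p_0 = [1:1:1]$ and $L_0 = \{x_1+x_2+x_3 = 0\}$, one checks from the definition that $T_t$ converges to the central projection from $p_0$ onto $L_0$ as $t \to 0^+$, uniformly on compact subsets of $\P^2 \setminus \{p_0\}$. Hence, taking this projection to be the map $T_0$ of Section~\ref{SEC_PROOFS}, we have $T_t \in \T(T_0,\delta)$ for all sufficiently small $t>0$, and conclusions (f)--(l) will follow from Proposition~\ref{prop:twotraps} and Theorems~\ref{thm:muexists}, \ref{thm:nuexists}, \ref{THM_MU_NOT_1D}, once conditions (A)--(D) and the hyperbolicity of the induced one-dimensional map are verified.

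Conditions (A)--(D) are straightforward. (B) is immediate since each $a_j$ has coordinate sum $2$. (A) follows from $\rho(1,1,1) = -3 \neq 0$, which forces $p_0 \notin Q = f(\I)$ by Proposition~\ref{prop:Q}, together with a direct check ruling out $p_0 \in f(L_0) = g(Q)$. Condition (C) is a consequence of Proposition~\ref{prop:partition}, since each $\U_j$ maps homeomorphically onto $\U \ni p_0$, giving four distinct regular preimages. For (D), the computation $f([x_1:x_2:-x_1-x_2]) = [x_1^4:x_2^4:(x_1+x_2)^4]$, combined with the fact that $h|_{L_0}$ is birational onto $Q$, shows that $f(L_0)$ is a conic, so $\deg f(L_0) = 2$.

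The main obstacle is to verify that the degree-$4$ rational self-map $f_0 := T_0 \circ f|_{L_0}$ of $L_0 \cong \P^1$ is hyperbolic, as required by Theorem~\ref{thm:nuexists}. The natural candidates for attracting fixed points are $T_0(e_j)$ for $j = 1,2,3$: an explicit calculation (for instance, $f([2:-1:-1]) = [16:1:1]$ and $T_0([16:1:1]) = [2:-1:-1]$) confirms that each is fixed, and a derivative computation, simplified by the $S_3$-symmetry of $f$ and $T_0$ in $x_1,x_2,x_3$, shows each is attracting and reduces the hyperbolicity check to tracking a single critical orbit of $f_0$ and showing it converges to one of the $T_0(e_j)$.

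With these ingredients in hand, the remaining conclusions follow. Theorem~\ref{thm:muexists} supplies a unique $f_t$-invariant measure of entropy $\log 4$ with support in $B_\epsilon(f^{-1}(p_0)) \subset \U$; by uniqueness this coincides with the real measure $\mu$ from Theorem~\ref{THM_REAL_MEASURE}, yielding (f), density of periodic points in $\A$ through the conjugacy to the one-sided $4$-shift, and the reality of $\A$ in (j). Theorem~\ref{thm:nuexists} provides $\nu$ in $B_\epsilon(L_0)$ with the properties in (g), density of periodic points in $\Omega$ via the semiconjugacy from the natural extension of $f_0|_{\J}$, the attracting fixed points of (k) as the unique lifts of $T_0(e_j)$, and, together with Proposition~\ref{prop:twotraps}, the orbit trichotomy in (l); uniform expansion of $f_t$ on $\A$ rules out any further attracting cycles. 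Part (i) is Theorem~\ref{THM_MU_NOT_1D} for $\mu$ and is embedded in the proof of Theorem~\ref{thm:nuexists} for $\nu$. The non-reality of $\Omega$ in (j) is obtained by showing that $\J \not\subset L_0 \cap \R\P^2$, for instance by exhibiting a non-real repelling periodic point of $f_0$, which rules out that $f_0$ restricts to a real Chebyshev-like self-map of $L_0 \cap \R\P^2$.
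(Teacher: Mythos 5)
Your overall plan — recognize $\{f_t\}$ as an instance of the general perturbation framework, verify conditions (A)--(D), establish that the induced one-dimensional map $f_0$ is hyperbolic, and then invoke Proposition~\ref{prop:twotraps} and Theorems~\ref{thm:muexists}, \ref{thm:nuexists}, \ref{THM_MU_NOT_1D} — is exactly the paper's strategy (the paper proves these facts as Lemma~\ref{LEM_J1_HYPERBOLIC} and then cites them together with Theorem~\ref{THM_REAL_MEASURE}). Your identification of the attracting fixed points of $f_0$ as $T_0(e_j) = [2:-1:-1]$ and its $S_3$-translates is correct, and indeed consistent with the displayed formula $r(z)$ (multiplier $-4/5$ at each); this differs from the paper's text, which attributes the fixed points to $L_0\cap X_j$, but those points actually satisfy $f_0(L_0\cap X_j) = T_0(a_j)=T_0(e_j)$ rather than being fixed themselves.

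The genuine gap in your proposal is the hyperbolicity of $f_0$. You reduce this to ``tracking a single critical orbit of $f_0$ and showing it converges to one of the $T_0(e_j)$,'' but this is left undone, and it is not even clear the $S_3$-symmetry alone reduces you to one orbit. The paper's argument is structurally different and avoids any orbit-tracking: since $r$ has degree $4$ it has exactly six critical points; an explicit check shows they are all non-real and hence form three complex-conjugate pairs; the Fatou--Julia inequality guarantees at least one critical point in the immediate basin of each of the three attracting fixed points, and because $r$ is real with real fixed points, each immediate basin is conjugation-invariant and so contains a full conjugate pair. The count $3\times 2 = 6$ then exhausts all critical points, so $r$ is hyperbolic and has no further attracting cycles — which is also what underlies conclusion (k) and the trichotomy in (l). Your proposal does not secure this exhaustiveness. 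Secondly, your verification of (D) asserts $f(L_0)$ is a conic of degree $2$; in fact $f(L_0)=g(Q)$ is an irreducible quartic (the parametrization $[s^4:t^4:(s+t)^4]$ is generically injective), so the intermediate claim is wrong even though the needed conclusion $\deg f(L_0)\ge 2$ survives. Finally, for the non-reality of $\Omega$ you propose exhibiting a non-real repelling periodic point of $f_0$ (which does exist — $f_0$ has a pair of complex-conjugate repelling fixed points); the paper instead deduces non-reality from connectivity of $\J$ together with the existence of three disjoint basins. Your route is legitimate but, like the hyperbolicity step, is asserted rather than carried out.
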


\begin{remark} If $t\in (0,1)$ is not close to $0$, then we do not know whether $f_t$ admits measures (real or complex) of maximal entropy different from $\mu$.
\end{remark}

\subsection{Dynamical properties of $f_t$ that hold for any $t\in (0,1)$.}
Allowing for now that $t$ is any parameter in $(0,1)$, we will prove the assertions in Theorem \ref{THM_REAL_MEASURE} more or less in order.  

\begin{proof}[Proof of (a)] We already have $d(f_t) = d(f) = 4 = \lambda_2(f) = \lambda_2(f_t)$.
Recall that $\I = \I(f) \subset \partial{\U}$.  Hence by \eqref{EQN_BACKWARD}, the entire backward orbit of $\I$ is contained in $\U$.  On the other hand, $f_t(\E) = T_t(\I) \subset f_t(\partial{\U})$ does not meet $\U$.  Hence $f_t$ is algebraically stable, and it follows that $\lambda_1(f_t) = d(f_t) = 4$.  
\end{proof}

\begin{proof}[Proof of (b)]
Let $p=a_j$ be any point in $\I$.  It suffices to show that both criteria of
Theorem \ref{THM_CONDITIONS_FOR_NO_FOLIATION} are satisfied.  Since
$p\in\partial{\U}$ and $f_t(\E)$ lies outside $\overline{\U}$, it follows from
\eqref{EQN_BACKWARD} that the entire backward orbit of $p$ is contained in
$\U\setminus\E$.  In particular, $f_t$ is a finite map at each point of the
entire backward orbit of $p$.  Meanwhile, since the branch locus of $f_t$ is
also disjoint from $\U$, we infer that $f$ is regular at every point in the
backward orbit of $p$.  In particular, the backward orbit has infinitely many
distinct points.  So the second condition of Theorem
\ref{THM_CONDITIONS_FOR_NO_FOLIATION} holds.

For the first condition, note that $f_t(p) = T_t(Q)$.  In particular $f_t(p)\cap\overline{\R^2} = \partial\U_t$ is disjoint from $\overline{\U}$.  So from \eqref{EQN_BACKWARD} again, we infer that $f_t^n(p)\cap \I = \emptyset$ for all $n > 0$.  In particular $f_t^n(p)$ is irreducible for all $n > 0$.  
It remains to show that $\deg f_t^n(p) \to \infty$.    Notice that the diagonal line $L =\{[x:x:z]\}$ is invariant
under $f_t$.  Moreover, $L$ is not contained in $f_t^n(p)$ for $n~>~0$, since the latter is irreducible and symmetric in the homogeneous coordinates $x_1,x_2,x_3$.  Consider the
intersection between $f_t^n(p)$ for $n\geq 2$ and $L$.
Since $f_t^{n-1}(p)$ meets the exceptional line $A_3$, and $f_t(A_3) \in
\partial\U_t\cap L$, it follows that $f_t^n(p)$ contains orbit segment
$f_t(A_3),\dots,f_t^{n-1}(A_3)\in L$.  It follows from \eqref{EQN_BACKWARD}
that the points in this segment are distinct.  Hence $\deg f_t^n(p) \geq \#
f_t^n(p)\cap L \geq n-1$ for all $n\in\N$.

\end{proof}

\begin{proof}[Proof of (c)]  
Since $T_t(\U) \supset\overline{\U}$ and $I(f_t) \subset\partial\U$, it follows
from Proposition \ref{prop:Q} and Equation (\ref{EQN_BACKWARD})
that 
$$
\K := \bigcap_{n \geq 0} f_t^{-n}(\overline{\U})
$$
is a totally invariant subset of $\U$.  Proposition \ref{prop:partition} gives
that $f_t^{-1}(\overline{\U})$ is a disjoint union of four compact sets
$\U_j\cap f_t^{-1}(\overline{\U}) \subsetneq \U_j$, $j=0,1,2,3$, and that $f_t$ maps each of
these sets homeomorphically onto $\overline{\U}$.  Hence we can assign to each
$p\in \K$ its \emph{itinerary} $\iota(p) = (i_n)_{n\geq 0}$ where $i_n\in
\{0,1,2,3\}$ is chosen to satisfy $f_t^n(p) \in \U_{i_n}$.  Standard arguments
then tell us that the map $\iota:\K \to \Sigma_4$ from points to itineraries is
a continuous semiconjugacy onto the full 4-shift $\sigma:\Sigma_4\to \Sigma_4$
(given by $(i_n)\mapsto (i_{n+1})$).  It follows that $\log 4 \geq h_{top}(f_t)
\geq h_{top}(\sigma) = \log 4$. 
\end{proof}

\begin{proof}[Proof of (d)]
 The $4$-shift admits a unique measure of maximal entropy $\check \mu$, and the support of this measure is all of $\Sigma_4$.  Starting with any (not necessarily invariant) measure $\mu_0$ on $\K$ such that $\iota_*\mu_0 = \check\mu$, any weak limit $\mu$ of the sequence
$$
\frac{1}{N+1}\sum_{n=0}^N f_{t*}^n\mu_0
$$
will be an invariant measure satisfying $\iota_*\mu = \check\mu$.  Taking an extreme point of
all of the possible limits, the measure can be assumed to be ergodic.  The inequalities
$
h_{\check\mu}(\sigma) \leq h_\mu(f_t)\leq h_{top}(f_t)
$
imply that that $h_\mu(f_t) = \log 4$.
\end{proof}

\begin{proof}[Proof of (e)]
Here we reuse some of the arguments for Theorem \ref{THM_MU_NOT_1D}.  Suppose $\supp\mu$ is contained in an algebraic curve $C\subset\P^2$.  We may assume that each component of $C$ intersects $\supp \mu$ in a set of positive measure, containing infinitely many points.  Ergodicity of $\mu$ implies that the components of $C$ are permuted in a single cycle.  The arguments for Lemma \ref{LEM_SMOOTH_CURVE} imply (again) that $f_t^{-1}(C) = C$ and that no component of $C$ is critical for $f_t$.  Thus
\begin{align*}
f_t^* C = C + \sum n_i A_i
\end{align*}
with at least one coefficient, say e.g. $n_3$, positive.  Hence $f_t(A_3)\in C$, and by invariance of $C$ the entire forward orbit of $A_3$ is contained in $C$.  It is also, however, an infinite set contained in the forward invariant diagonal line $L = \{[x:x:z]\}$ that we considered in part (c).  Since the components of $C$ are cyclically permuted by $f_t$, each of them intersects $L$ in infinitely many distinct points.  We conclude that $C = L$.  A simple calculation shows that $L$ is not backward invariant, and this contradiction concludes the proof.
\end{proof}

\subsection{Dynamical properties of $f_t$ that hold for $t$ near $0$}

\noindent
It turns out that the dynamics of the limiting one dimensional map $f_0:L_0\to L_0$ can be fairly easily understood.  In particular, the points $[1:-1:0], [-1:0:1], [0:1:-1]$ where $L_0$ meets the $f$-invariant branch curves $X_1,X_2,X_3$ are necessarily fixed and attracting for $f_0$.  Let $z$ denote the (unique) affine coordinate on $L_0$ that identifies these three points with $0,1,\infty\in\P^1$.  One finds by direct computation that $f_0|_{L_0}$ becomes the rational function
$$
\label{EQN_1D_MAPPING_R}
r(z) = -{\frac {z \left( 5\,{z}^{3}-12\,{z}^{2}+6\,z-4 \right) }{4\,{z}^{3}-6
\,{z}^{2}+12\,z-5}}.
$$
Figure \ref{FIG_JULIA} shows a computer generated image of the basins of attraction the three fixed points of $r(z)$.

\begin{figure}
\includegraphics[scale=0.26]{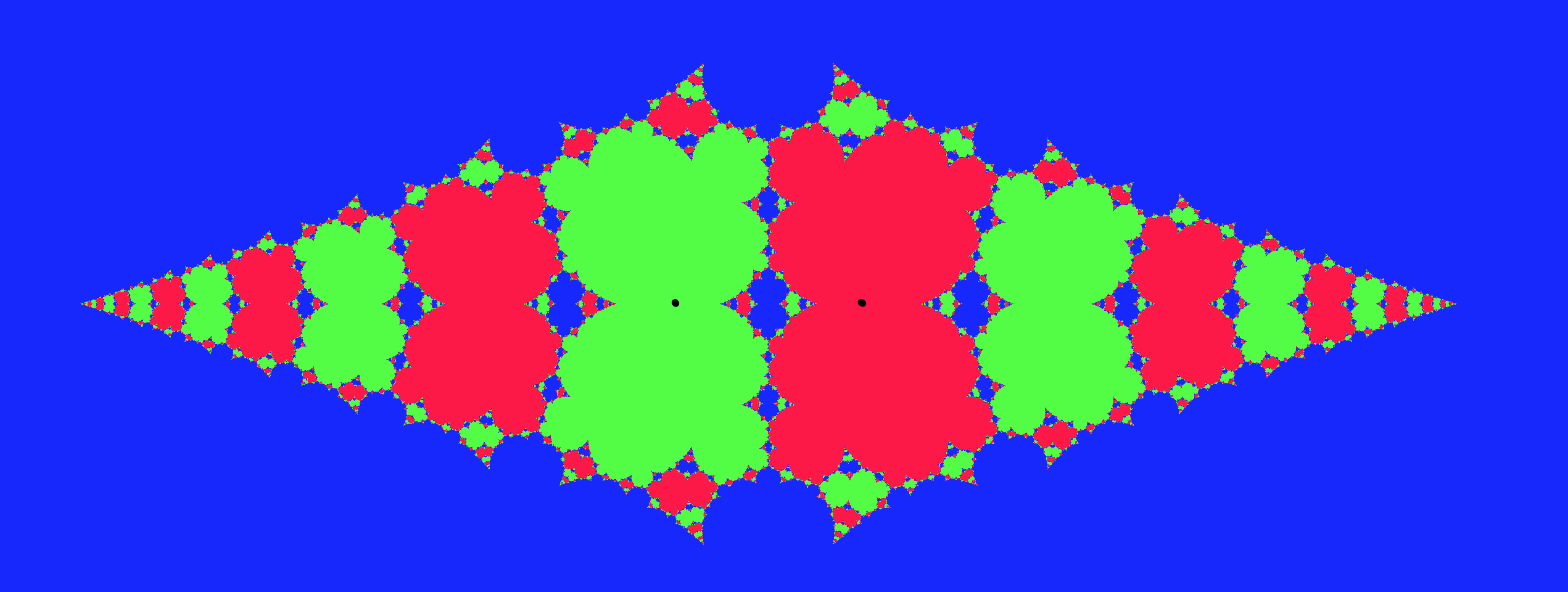}
\caption{Julia set for $f_{0|L_0} : L_0 \rightarrow L_0$, with the basins of attraction of $0$ in
green, $1$ in red, and $\infty$ in blue.  The whole figure corresponds
(approximately) to  $-3.5~\leq~{\rm re}(z)~\leq~4.5$ and $-1.5 \leq {\rm im}(z)
\leq 1.5$.\label{FIG_JULIA}}
\end{figure}

\begin{lemma}\label{LEM_J1_HYPERBOLIC} The Julia set $\J$ for $f_0|_{L_0}$ is hyperbolic and connected.  Moreover $L_0\setminus \J$ consists of the basins of the three attracting fixed points $[1:-1:0],[-1:0:1],[0:1:-1]\in L_0$.  Consequently $\J$ is not real, i.e. not contained in $L_0\cap\R\P^2$.
\end{lemma}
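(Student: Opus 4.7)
The plan is to analyze the one-variable rational map $r(z)$ of degree $4$ directly. First I would establish that the fixed points $0$, $1$, $\infty$ are attracting: the Taylor expansion of $r$ at $0$ gives $r'(0) = -4/5$, and the permutation symmetry of $f$ in the homogeneous coordinates $x_1, x_2, x_3$ descends to an $S_3$-action on $L_0$ that cyclically permutes $\{0, 1, \infty\}$ and conjugates $r$ to itself, so the multipliers at $1$ and $\infty$ likewise have modulus $4/5$. By Riemann--Hurwitz, $r$ has exactly six critical points counted with multiplicity; hyperbolicity amounts to checking that each critical orbit is eventually captured in a linearizing neighborhood of one of the three attracting fixed points, which by the $S_3$-symmetry reduces to verifying this for two representative critical points.

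With hyperbolicity in hand, Sullivan's no-wandering-domains theorem together with the classification of periodic Fatou components gives that every Fatou component is eventually periodic and lies in the basin of an attracting cycle. Since all six critical orbits accumulate on $\{0,1,\infty\}$, the Fatou--Shishikura inequality rules out any further non-repelling cycle, so the Fatou set is exactly $B_0 \cup B_1 \cup B_\infty$. Connectedness of $\J$ then follows from Riemann--Hurwitz applied to each immediate basin: the $S_3$-symmetry distributes critical points symmetrically among the three immediate basins, forcing each (and hence, by pulling back through $r$, every Fatou component) to be simply connected, which is equivalent to connectedness of $\J$.

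For the final claim, note that $r$ has real coefficients, so $\R\P^1 \subset L_0$ is $r$-invariant, and the three attracting fixed points $0$, $1$, $\infty$ are real. A direct inspection of the graph of the real rational function $r|_\R$ (consistent with Figure \ref{FIG_JULIA}) shows that every real point is attracted to one of the three fixed points, so $\R\P^1$ is entirely contained in the Fatou set; hence $\J$ cannot lie inside $\R\P^1$.

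The main obstacle is certifying rigorously that all six critical orbits converge to $\{0,1,\infty\}$: the critical points of $r$ are roots of a degree-six polynomial that need not factor over $\Q$, so one typically combines precise numerical iteration with a Koenigs-linearization estimate to guarantee that each orbit, once sufficiently close to an attracting fixed point, remains trapped in its immediate basin.
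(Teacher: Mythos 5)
Your overall framework (attracting fixed points, control of critical orbits, Riemann--Hurwitz for connectedness, real symmetry for the last claim) is the right one, and your Riemann--Hurwitz computation on the immediate basins is essentially the paper's. But you have a genuine gap at the crucial hyperbolicity step, and you flag it yourself as ``the main obstacle'': your plan is to certify numerically that all six critical orbits of $r$ converge to $\{0,1,\infty\}$, which is a program for a computation rather than a proof. As written, there is no argument that the critical orbits actually land in linearizing neighborhoods, and a degree-$6$ critical polynomial with no rational roots leaves a great deal to be verified.

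The paper avoids numerics entirely with a counting argument that is worth internalizing. By the Fatou--Julia lemma, each of the three attracting fixed points $0,1,\infty$ must capture at least one critical point of $r$ in its immediate basin. Since $r$ has real coefficients and these fixed points are all real, each immediate basin is invariant under complex conjugation; and since the six critical points of $r$ are all non-real (they form three conjugate pairs), a critical point and its \emph{distinct} conjugate must lie in the same immediate basin. That forces at least two critical points per immediate basin, hence at least six in total --- exactly all of them. So every critical orbit converges to an attracting fixed point and $\J$ is hyperbolic, with no iteration required. The non-reality of $\J$ is likewise cheaper than your proposed inspection of the real graph: once the Fatou set is identified as the union of the three basins, it has at least three connected components (the three immediate basins), whereas the complement in $\P^1$ of any closed subset of the circle $\R\P^1$ has at most two.
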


\begin{proof}
In addition to the three attracting fixed points: $0, 1$ and $\infty$, the above map $r(z)$ has six
critical points, which come in three conjugate pairs.  
By the Fatou-Julia Lemma, each of the attracting fixed points must have at least one
critical point in its immediate basin of attraction.  However, because of the real symmetry, each attracting fixed point actually has a conjugate pair of critical points in its immediate basin,
implying that $\J$ is hyperbolic.

To see that $\J$ is connected, we will check that each Fatou component of $r$
is homeomorphic to a disc.  By the threefold symmetry permuting $0$, $1$, and
$\infty$, it suffices to do this for each component in the basin of attraction
of $0$.  The immediate basin of $0$ is mapped to itself by $r$ under a ramified
cover with two simple critical points.  The Riemann-Hurwitz Theorem can be used to
rule out all possible degrees of this cover other than degree $3$, in which
case it implies that the immediate basin has Euler characteristic $1$.  Since
$r$ has no critical points outside the immediate basins of $0,1,$ and $\infty$,
each of the other components of the basin of attraction of $0$ is mapped
conformally onto the immediate basin by an iterate of $r$.  Thus, these
components are discs as well. 
\end{proof}

All conclusions of Theorem \ref{THM_TWO_MEASURES} now proceed directly from Lemma \ref{LEM_J1_HYPERBOLIC}, Theorem \ref{THM_REAL_MEASURE}, and the earlier Theorems \ref{thm:muexists} and \ref{thm:nuexists}.

\subsection*{Acknowledgments}
We thank Micha{\l} Misiurewicz for helpful discussions.  We also thank the referee for his or her very careful reading and for many good suggestions for improving this paper.
The first two authors were supported by NSF grant DMS-1066978 and the third author by NSF grant DMS-1348589.

\bibliographystyle{plain}

\end{document}